\newtheorem{theorem}{Theorem}
\newtheorem{proposition}{Proposition}
\newtheorem{lemma}{Lemma}
\newtheorem{corollary}{Corollary}
\theoremstyle{definition}
\newtheorem{definition}{Definition}
\newtheorem{remark}{Remark}
\newtheorem{example}{Example}
\numberwithin{equation}{section}
\numberwithin{theorem}{section}
\numberwithin{lemma}{section}
\numberwithin{corollary}{section}
\numberwithin{proposition}{section}
\numberwithin{definition}{section}
\numberwithin{example}{section}
\numberwithin{remark}{section}
\newcommand{\suu}{\textnormal{\bf{supp}}}
\newcommand{\su}{\textnormal{\bf{supp}}^+}
\newcommand{\sm}{{\textnormal{\bf{Tsupp}}(m)}}
\newcommand{\smq}{{\textnormal{\bf{Tsupp}}(m,q)}}
\newcommand{\smqest}{{\widehat{\textnormal{\bf{Tsupp}}}(k,m,q)}}
\newcommand{\smest}{{\widehat{\textnormal{\bf{Tsupp}}}(k,m)}}
\newcommand{\s}{\mathbb{S}}
\newcommand{\st}{\mathbb{S}_T}
\newcommand{\prob}{\mathbb{P}}
\newcommand{\ex}{\mathbb{E}}
\newcommand{\bx}{\boldsymbol{x}}
\newcommand{\by}{\boldsymbol{y}}
\newcommand{\bz}{\boldsymbol{z}}
\newcommand{\Rast}{\text{Rast}}
\newcommand{\bTheta}{\boldsymbol{\Theta}}
\newcommand{\bU}{\boldsymbol{U}}
\newcommand{\Mxm}{\textnormal{M}({\bf x},m)}
\newcommand{\pa}{\textnormal{proj}_{{\bf a}_1,{\bf a}_2}}
\newcommand{\bone}{\boldsymbol 1}
\date{}
\title{Asymptotic independence and support detection techniques for
  heavy-tailed multivariate data} 
\author{
 Jaakko  Lehtomaa\footnote{Jaakko Lehtomaa was funded by the Finnish
Cultural Foundation.}\\
  \and
Sidney I.  Resnick\footnote{Sidney Resnick was partially supported by US ARO MURI grant W911NF-12-1-0385.}\\
}
\begin{document}
\maketitle

\begin{abstract}
One of the central objectives of modern risk management is to find a
set of risks where the probability of multiple simultaneous
catastrophic events is negligible. That is, risks are taken only when
their joint behavior seems sufficiently independent. This paper aims
to help to identify asymptotically independent risks by providing
additional tools for describing dependence structures of multiple
risks when the individual risks can obtain very large values. 

The study is performed in the setting of multivariate regular
variation. We show how asymptotic independence is connected to
properties of the support of the angular measure and present
an asymptotically consistent estimator of the support. The estimator
generalizes to any dimension $N\geq 2$ and requires no prior knowledge
of the support. The validity of the support estimate can be rigorously
tested under mild assumptions by an asymptotically normal test
statistic. 
\end{abstract}

\noindent 2010 MSC: Primary 62E20 60G70, Secondary 62G05 60G57 

\section{Introduction}\label{intro}
This paper  contributes partial solutions to two problems: How can one
decide if random variables are asymptotically independent and how can
their dependence structure be visualized, quantified and analyzed in practice? 
Our approach uses exploratory methods where grid based
  support estimates generate 
  hypotheses about dependence. These exploratory methods are combined with a testing scheme relying
  on asymptotic normality.

Both finance and insurance benefit
from having robust tools for understanding extremal
dependence; that is, the dependence of extreme values such as losses
  or claims. In finance, one of the central tasks of risk management is
to classify assets into classes that have minimal or no extremal
  dependence. 
 The joint behavior
of individual assets affects portfolio allocation strategies and
ultimately determines which equities are selected for a
portfolio. 
By selecting equities for the portfolio that are as independent as
possible, we reduce portfolio risk since the portfolio is unlikely to
experience many large losses at once. Our method can be viewed as a way to reduce the amount of systemic risk of a portfolio.
In insurance, large claims are a major risk factor because they can cause
insolvency. This risk is more serious if 
multiple lines of business can suffer
large claims at the same time. Furthermore, successful pricing of
insurance contracts as well as negotiations with reinsurance providers
depend on having an understanding of the worst case risks.  Thus,
 it is necessary to have an accurate
dependence model especially for large claims.  

We assume that all of the multivariate risks are {\em heavy-tailed}
which for this paper means the distributions of risk vectors are 
 multivariate regularly varying (MRV) \cite{MR2271424}; this is
 defined in Section \ref{subsec:mrv}.
In heavy tailed modeling, there are many complementary studies recommending methods for quantifying or
modeling multivariate extremal dependence.
Copula methods \cite{joe:li:2010,
  joe:1997,joe:2015, genestEtAl:2018,
  embrechts2001modelling,mcneil:frey:embrechts:2015,nelsen:2006}  have a large  literature 
  and 
are not restricted to heavy tails. Extreme value methods focus on quantifying  
asymptotic dependence between pairs using numerical summaries such as 
 the coefficient of tail dependence (\cite[p. 163]{coles:2001}, \cite[p258]{dehaan:ferreira:2006},
\cite{MR1707950,MR2156598, cooley:thibaud:2016})  
or similar concepts like the extremal dependence measure \cite{larsson:resnick:2012,MR2048253}
 or the extremogram\cite{MR3539306,davis:mikosch:2009,
   davis:mikosch:cribben:2012}. 
Other studies concentrate on estimating the limit measure of regular
variation or the angular measure \cite{MR2541452, MR1475660,
einmahl:dehaan:piterbarg:2001,dehaan:resnick:1993, MR2271424}
and additionally there is the hidden regular variation stream of inquiry
about whether multiple distinct heavy 
tail asymptotic regimes coexist
\cite{mitra:resnick:2011hrv,mitra:resnick:2013,MR3442427,
MR3077544,MR3737388,MR2271424,heffernan:resnick:2005,MR3271332,MR2002121}.
There are recent efforts to assess dependence by estimating the
support of the limit measure of regular variation \cite{MR3737388,
  Scholkopf:2001:ESH:1119748.1119749,MR3698112}
and growing interest in issues around dimension reduction of
high-dimensional heavy tailed vectors
\cite{MR3698112,MR3553240,cooley:thibaud:2016, MR3513601,HOFERT2017}.
 Beyond the MRV
setting, similar topics have been discussed from the extreme value
theoretical viewpoint; see e.g.  \cite[Section ~6]{MR1458613} and its
references. 

Our approach relies on an exploratory step in which we assess
dependence by estimating the support of the  limit measure.
The support of the limit measure can indicate if the risk vector
components are strongly asymptotically dependent or asymptotically independent.
This step is used to generate hypotheses about the dependence
structure which can then be tested more formally using test statistics
that are asymptotically normal. 
Support estimation is accomplished using what we call the {\it grid
  based estimator\/} which first bins the data and then counts
bin frequencies. We use this binning method to speed computation anticipating
cases where dimensions and sample sizes are large enough to cause
computing problems.

Assuming the data follows a standard multivariate regularly varying
distribution,
the data is first thresholded based on the magnitudes of sample
vectors and then divided into two parts. The first part is used to
establish the grid based estimation of the  support of the limit
measure and to generate hypotheses about the dependence structure. The
remaining data is used to test the validity of the support estimate
using an asymptotically normal test statistic. 

\subsection{Why conventional risk measures relying on correlation may
  mislead.}\label{subsec:mislead} 
In applications centered on extreme risk, conventional moment based
risk measures such as correlation are potentially misleading. This is
a persistent message in the extreme value and heavy tails literature.
Typically, the dependence structure of large observations determines
worst case risk and small observations may have minimal impact on
worst case risk even if highly dependent.  The following toy example
illustrates inadequacies of correlation.

\begin{example}\label{ex1} Let $\alpha>2$ and $l>1$. Suppose $X,Z$ and $B$ are independent random variables such that $\prob(X>x)=\prob(Z>x)=x^{-\alpha}$ for $x\geq 1$ and $1$ otherwise. Let $\prob(B=0)=1-\prob(B=1)=1/2$. Set 
$$Y_1:=X\mathbbm{1}(X\leq l)+lZB\mathbbm{1}(X>l)$$
and
$$Y_2:=Z\mathbbm{1}(X\leq l,Z\leq l)+X\mathbbm{1}(X>l).$$

Suppose the pairs $(X,Z)$ and $(X,Y_i)$, $i=1,2$, denote risks to a
company where the components of vectors correspond to different lines
of businesses. The aim of the company is to avoid insolvency from
  large losses, so the pair $(X,Y_1)$ should not be
considered more risky than $(X,Z)$ because the probability of two
simultaneous catastrophic losses exceeding $x>l$  for both vectors is
of order $x^{-2\alpha}$ . On the other hand,
the pair $(X,Y_2)$ is riskier than $(X,Y_1)$ or $(X,Z)$,
because $Y_2=X$ when $X>l$, resulting in the probability of two catastrophic losses
exceeding $x>l$ to be of order $x^{-\alpha}$.

However, one
 reaches contradictory conclusions if correlation is used to
quantify riskiness.  
Due to independence of $X$ and $Z$, $  \textnormal{Corr}(X,Z)=0$.
However, $\textnormal{Corr}(X,Y_1)\to 1,$ as $l\to \infty$. In
addition, the pair $(X,Y_2)$ is asymptotically fully dependent for all
$l>1$ using the terminology of \cite{MR3737388} since
$P[X>x,Y_2>x]\sim x^{-\alpha}$. Yet,
$\textnormal{Corr}(X,Y_2)\to 0,$ as $l \to \infty$. So, using
correlation as a measure of risk in this example leads to
overestimation of insignificant risk for $(X,Y_1)$ as well as underestimation of
potentially catastrophic risk for $(X,Y_2)$.  
\end{example}

Correlation, along with other popular risk metrics, fails to
adequately quantify risk in Example \ref{ex1} because
it has limited
capability of describing dependence of rare events. Similar phenomena
as in Example \ref{ex1} have been observed in
nature. In \cite{MR3553240}, the authors study meteorological data in
order to model extreme ground level ozone events. The study depicts
cases where the etremal observations have significantly different
dependence structure than small observations, see e.g. Figure 1 of
\cite{MR3553240}.  

In conclusion, modeling dependence structures with emphasis on
accuracy of tail behavior requires different tools than modeling
systems as a whole.  When the MRV or extreme value framework is
  applicable, the methods presented in Sections
\ref{supportestimationsection} and \ref{asindsection} {overcome} some of
the shortcomings of previous approaches and
allow practitioners to more fully understand how each risk
  contributes to overal risk management goals in finance and insurance.

\subsection{Structure of the paper}

The rest of Section \ref{intro}  defines notation, concepts and
definitions. In Section \ref{supportestimationsection}, the grid based
asymptotic support estimator for multivariate heavy-tailed data is
presented. Consistency and related properties are proved in Sections
\ref{supportestconsistency} and \ref{sec:consistencyforposq}. We review the definition of asymptotic independence
as well as connections with limiting behavior of {heavy
  tailed random vectors} 
in Section \ref{asindsection}
{and we introduce a  test for asymptotic independence based on} 
asymptotic normality in Section
\ref{asnormalsection}.  We illustrate
 the
techniques developed in Sections
\ref{supportestimationsection} and \ref{asindsection} 
by means of simulated and real examples
in Section \ref{realdatatestsection}.

\subsection{Basic definitions}

Suppose $(\Omega,\mathcal{B},\prob)$ is a probability space where all
the subsequent random variables are defined. Throughout the paper
random variables take values in a metric space
$(\mathbb{R}^{N},d)$, where $N\geq 2$ is the dimension of
the space, and
$d=d_{\mathbb{R}^N}$ is the $L_2$ or {\em Euclidean distance}:
for ${\bf x}
=(x^{(1)},x^{(2)},\ldots,x^{(N)})$ and 
${\bf y}=(y^{(1)},y^{(2)},\ldots,y^{(N)})$,
$d({\bf x},{\bf y})=\sqrt{\sum_{i=1}^{N}(x^{(i)}-y^{(i)})^2}$.
{The $L_2$-}distance is used in mappings that project sets into lower
dimensional spaces in a way that does not distort the image. However,
unless otherwise stated, $||\cdot||$ denotes the 
$L_1$-norm, where
$||{\bf x}||=\sum_{i=1}^{N}|x^{(i)}|. $
The $L_1$-norm is often natural
because in applications the total risk is typically the sum of
marginal risks. So, any condition on the size of the $L_1$ norm can be
directly viewed as a condition on the total risk.

Upper indices are
used to identify components of vectors. Lower indices are reserved for
order statistics. For $1\leq i \leq N$, the $i$th largest component of
${\bf x}$ is $x_{(i)}$. All inequalities and operations involving
vectors are understood componentwise as in Section 1.2 of
\cite{MR2002121}. It is convenient to set $\bone=(1,\dots,1)$
where the dimension is clear from context.  For a finite set $A$, the number of elements in $A$ is denoted by $\#A$.

For a metric space $\mathbb{E}$, we set $M_+(\mathbb{E})$ to be the
set of all non-negative Radon measures on $\mathbb{E}$; that is, measures that are
finite on compact subsets of $\mathbb{E}$.
The collections of open, closed and compact sets are denoted by
$\mathcal{G}$,$\mathcal{F}$ and $\mathcal{K}$, respectively. For
details about the {\it Hausdorff metric\/} on $\mathcal{K}$, see
\cite{matheron:1975, molchanov:2005}. For a set
$A\subset \mathbb{R}^N$ the whole space can be partitioned as
$\mathbb{R}^N=\textnormal{int}(A)\cup\textnormal{ext}(A)\cup\partial
A$, the  topological interior, exterior and boundary of the set $A$. The diameter of $A$ is denoted by
$\textnormal{diam}(A)$, the complement of $A$ by $A^c$ and the closure of $A$ by $\textnormal{cl}(A)$. The {Euclidean}
ball with center ${\bf x} \in \mathbb{R}^N$ and radius $\delta>0$ is
$B({\bf x},\delta)$. The notation $:=$ is used when the left hand
side is defined by the right hand side of the equation.

\subsection{Multivariate regular variation}\label{subsec:mrv}

The standard definition of multivariate regular variation is
defined in \cite[Theorem 6.1]{MR2271424}. We allow
 possibly negative
values of components. 

\begin{definition}\label{mrvdef}Suppose ${\bf
    Z}=(Z^{(1)},Z^{(2)},\ldots,Z^{(N)})$ is a random vector in
  $\mathbb{R}^N$. Set $\mathbb{E}:=[-\infty,\infty]^N\backslash \{{\bf
    0}\}$.  We say that ${\bf Z}$ is standard multivariate regularly
  varying with limit measure $\nu$ if there exists a function
  $b(t)\uparrow\infty$, as $t\to \infty$, such that  
\begin{equation}\label{mrvdefeq}
t \prob \left( \frac{{\bf Z}}{b(t)}\in
  \cdot\right)\stackrel{v}{\to}\nu (\cdot)
\end{equation}
in $M_+(\mathbb{E})$, where $\stackrel{v}{\to}$ stands for vague
convergence of measures. 
\end{definition}

Note that normalizing all components using the
same function $b$ implies that the components must be tail equivalent,
see \cite[Remark 6.1.]{MR2271424}.

Multivariate regular variation has an equivalent definition via the probability
measure $\s$, called the {\em angular measure} or  {\em spectral
  measure} defined on the $L_1$-unit sphere
\begin{equation}\label{cndef}
C^N:=\left\{{\bf z}\in \mathbb{R}^N: ||{\bf z}||=1 \right\}.
\end{equation}
Then equivalently, ${\bf Z}$ is standard multivariate regularly varying if there exist a function $b(t)\uparrow\infty$, as $t\to \infty$, such that for 
$$(R,\Theta):=\left(||{\bf Z}||,\frac{{\bf Z}}{||{\bf Z}||}\right)$$
we have 
\begin{equation}\label{prodmrvdef}
t \prob \left( \left(\frac{R}{b(t)},\Theta\right)\in \cdot\right)\stackrel{v}{\to}c \nu_\alpha \times \s 
\end{equation}
in $M_+((0,\infty]\times C^N )$, as $t\to \infty$, where $c>0$, $\s$ is a probability measure on $C^N$ and $\nu_\alpha((x,\infty])=x^{-\alpha}$. The number $\alpha>0$ is called the {\em tail index} of the multivariate regularly varying distribution.

In addition to the $N$-simplex in $L_1$, set
$$C^N_+:=C^N \cap \mathbb{R}^N_+=\left\{{\bf z} \in \mathbb{R}^N_+:z^{(1)}+z^{(2)}+\cdots+z^{(N)}=1\right\} $$
to denote the part of simplex $C^N$ where all coordinates are
non-negative.
The {\em face} of the simplex corresponding to indices  $A\subset
\{1,2,\ldots,N\}$ is
\begin{equation}\label{e:face}
C^N(A):=\left\{{\bf z} \in C^N : z^{(i)}=0, \textnormal{ when } i \notin A\right\}.
\end{equation}

\subsection{The support of a measure}\label{subsec:support}
The asymptotic dependence structure of ${\bf Z}$ is controlled by
  the angular measure $\s (\cdot)$ and considerable information about
  extremal dependence is contained in the support.

\begin{definition}[Support of measure in $\mathbb{R}^N$] 
If $\mu$ is a measure on $\mathbb{R}^N$,
the support $\suu(\mu)$ of $\mu$ is the set
\begin{equation*}
\suu(\mu):=\left\{{\bf z} \in \mathbb{R}^N : \mu(B({\bf z},\delta))>0 \textnormal{ for all }\delta>0\right\}.
\end{equation*} 
Also,  $\suu(\mu)$ is the smallest closed set 
carrying the mass of $\mu$, 
$$\suu(\mu)=\bigcap_{A \in \mathcal{F},\,  \mu(A^c)=0}  A.$$
\end{definition}
{The support of a measure on a suitable subset of $\mathbb{R}^N$ is defined similarly.} We will be interested in the support $\suu$ of $\s$ and the part
of the support of $\s$ on simplex $C_+^N$ is denoted by $\su$.  

\subsection{$N$-simplex and simplex mappings}\label{smapping}

Section \ref{supportestimationsection}  estimates the
support of the angular measure $\s$ by approximating  
the support of $\s$ on {the positive $N $-dimensional
$L_1$ sphere $C^N_+$. The set $C^N_+$ is first projected to an $N-1$ 
dimensional space $[0,1]^{N-1}$  using a bijective simplex mapping 
defined below. This enables us to partition the $N-1$ dimensional
image space into a }grid consisting of equally sized 
rectangles{. The preimages in $C_+^N$ of the rectangles in
  the $N-1$ 
  dimensional space partition the $N$ dimensional set $C^N_+$.
  So the use of a simplex mapping provides a simple way to
  partition the positive $L_1$ simplex and offers improved
  visualization. In particular, when $N=3$, the visual representation
  of the support estimate is a planar set.}  

Rectangles accepted into the estimated support 
are determined from the data by the
concentrations of probability mass of the empirical estimate of the
limiting measure $\s$. This contrasts with \cite{MR3737388} which
assumed the support was a connected interval and estimated this
interval using
the range of the thresholded data.
Advantages of this current approach are computational efficiency and that
finding the sets of highest concentration provides a way to eliminate
noise arising from unlikely observations that lie outside of the
asymptotic support.

\begin{definition}\label{Tdefinition}
Let $N\geq 2$. Suppose $T$ is a bijective mapping $T \colon C_+^N\to[0,1]^{N-1}$ with property
$$d_{\mathbb{R}^{N}}({\bf x},{\bf y})=a  d_{\mathbb{R}^{N-1}}(T({\bf x}),T({\bf y})) $$
for  all ${\bf x},{\bf y}\in C_+^N$ and some constant $a>0$. Such a
mapping $T$ is called a {\em{simplex mapping}} associated with $C_+^N$.  
\end{definition}

When $N=3$ a simplex map aids visualization because supports can be
visualized in $\mathbb{R}^2$.
There is flexibility when choosing the 
mapping $T$ so the grid
positioning can be adjusted with respect to observed data if
necessary. By shifting the grid, one can avoid concentration of points
on grid boundaries. 

\begin{example}\label{exprojections}
\begin{enumerate}[a)]
\item If $N=2$, one can set 
$$T\left(\begin{bmatrix}z_1\\z_2\\\end{bmatrix}\right)
=z_1.$$
Here, $a=\sqrt{2}$.
\item If $N=3$, setting 
$$ 
T\left(\begin{bmatrix}z_1\\z_2\\z_3\\\end{bmatrix}\right)
=\begin{bmatrix}\frac{1}{2}(z_2-z_1+1)\\ \frac{\sqrt{3}}{2} z_3\\ \end{bmatrix}
$$
gives a mapping $C_+^3\mapsto [0,1]^2$. The image $T(C_+^3)$ is a
region in $[0,1]^2$ inside an equilateral triangle with edges on
$(0,0),(1,0)$ and $(1/2,\sqrt{3}/{2})$. The isometry property in Definition \ref{Tdefinition} can be shown to hold by observing that $z_3=1-z_2-z_1$ and writing the expressions for the squared $L_2$ distances in $\mathbb{R}^3$ and $\mathbb{R}^2$. The property holds with $a=\sqrt{2}$. Mapping T has an inverse
$T^{-1} \colon T(C_+^3) \to C_+^3$ given by 
$$
T^{-1}\left(\begin{bmatrix}z_1\\z_2\\\end{bmatrix}\right)
=\begin{bmatrix} 1-z_1-\frac{z_2}{\sqrt{3}}\\z_1-\frac{z_2}{\sqrt{3}}\\ \frac{2z_2}{\sqrt{3}}\\ \end{bmatrix}.
$$ 
\end{enumerate}
\end{example}

The mapping of Example \ref{exprojections} is used to visualize
3-dimensional data in 2-dimensions in Section \ref{realdatatestsection}.

\section{Support estimation}\label{supportestimationsection}
This section defines  the grid based estimator and shows  asymptotic 
consistency under general assumptions.
Suppose  $N\geq 2$ is the dimension of the
  data and $m\geq 2$ is an integer that
  determines the {\em resolution} of the asymptotic
support estimate. We 
 map the $N$-dimensional simplex into
$[0,1]^{N-1}$ and then the  image  is partitioned into
$m^{N-1}$ smaller sets. The partition is called a \emph{grid} and the
sub-squares (or cubes in higher dimensions) are  \emph{cells}. Some  grid cells are
accepted as part of the support while the rest are rejected based on a
data driven 
rule described in Section \ref{supportestimationsection}.  

The topic of support estimation has antecedents though not usually
in the context of estimating the support of an {\it asymptotic
  distribution\/}.
See \cite{baCuevas:2001,meister:2006,
  chevalier1976estimation,
  baEtAl, KorEtal, gijbels:peng:2000,
  cuevas1997plug}. A support estimation problem in
\cite{hsingEtAl:1998} assumes a uniform distribution over a convex
set. Estimating the support from a sample by placing a small ball around each
sample point was suggested in \cite{devroye:wise:1980} and followed up
in \cite{biau:cadre:mason:pelletier:2009}; this method 
has some overlap with our proposal.
The method in \cite[Proposition 6.3]{MR3077544} for estimating an
asymptotic support omits a condition.

\subsection{Support estimator and related quantities}

\subsubsection{Reduction to positive quadrant.}
Random vectors in $\mathbb{R}^N$ can be considered to have $2N$ tails but
usually without loss of generality it is enough to write 
proofs for the positive quadrant since negative components can be
reflected into  positive values by multiplying by $-1$.
 Let ${\bf s}\in \{-1,1\}^N$ be a vector of plus or minus $1$'s and
 for ${\bf z}\in \mathbb{R}^N$ define
 $${\bf s} \cdot {\bf z}=(s^{(1)}z^{(1)},\dots ,s^{(N)}z^{(N)}).$$
If ${\bf Z}$ is a multivariate regularly varying random vector in
$\mathbb{R}^N$, extreme behavior of ${\bf Z}$ in a quadrant other than
$\mathbb{R}_+^N $ can be studied by reducing to the case of the
positive quadrant by multiplying by an appropriate
${\bf s}$. For simplicity, we present the theory, 
for the case where the entire support $\bf{supp}(\s)$ is
in $C^N_+$. The general case is readily reduced to this one. 

 For a simplex
 mapping $T$  and  a multivariate regularly varying random vector
 ${\bf Z} \in
 {\mathbb{R}^N_+}$, define the $N-1$ dimensional random variable ${\bf
   U}$ as 
{\begin{equation}\label{Udef}
{\bf U}:=T({{\bf Z}}/ {||{\bf Z}||} ).
\end{equation}

\subsubsection{
Partition  $[0,1]^{N-1}$ into cells.} 
 Given a vector ${\bf x} \in [0,1]^{N-1}$ and $m\geq 2$, 
 define a  cell $\Mxm\subset \mathbb{R}^{N-1}$ by 
\begin{align}\label{eq:cellDef}
\Mxm:=& {\bf x}+\left[0,1/m\right)^{N-1}=[{\bf x}, {\bf x} +\frac 1m
        \bone )\\
=&\Big[x^{(1)},x^{(1)}+\frac{1}{m}\Big)\times\cdots 
   \times\Big[x^{(N-1)},x^{(N-1)}+\frac{1}{m}\Big)\nonumber
\end{align}
which is just the  box $[0,1/m)^{N-1}$ shifted  by the vector ${\bf
  x}$.
For any natural number $n$, set $[n0]=\{0,\dots,n-1\}$ and define
\begin{align}
  G_m:=&([m0]/m)^{N-1}\label{e:Gm}\end{align}
So 
$$G_m=\left \{{\bf x} \in [0,1]^{N-1 }:x^{(i)}\in
\left\{0,\frac{1}{m},\frac{2}{m},\ldots, \frac{m-1}{m} \right\}, 
   i=1,\ldots,N-1\right\}.$$

   \subsubsection{Approximate the support.}   After
partitioning the set $T(C^{N}_+)$ by grid cells, we rasterize the
asymptotic support of $\s_T =\s \circ T^{-1} $ for computational efficiency and then
estimate this approximation to the asymptotic support.  The estimation is done
by mapping thresholded observations and creating cell counts.

The definitions of the rasterized support, the
grid based support estimator and the proof of estimator consistency
depend on
{Proposition 6.2 of \cite[p.~158]{MR3077544}  or
  \cite[p. 308]{MR2271424} which give
\begin{equation}\label{lemma1eq2}
\hat{\s}_n(\cdot):=\frac{1}{k}\sum_{i=1}^n \mathbbm{1}(||{\bf
  Z}_i||>||{\bf Z}_{(k+1)}||)\epsilon_{{\bf Z}_i/||{\bf
    Z}_i||}(\cdot)\Rightarrow  \mathbb{S}(\cdot) ,
\end{equation}
as $n\to\infty$, $k=k(n)\to\infty$, $n/k\to\infty$ in $\prob(C^N)$, the space of probability measures on $C^N$ and 
the  limit is non-random so convergence also holds in probability.  This
convergence is preserved under the mapping $T$ and we define 
\begin{equation}\label{e:S}
\mathbb{S}_T=\mathbb{S}\circ T^{-1},
\quad \
\hat{\s}_{n,T}=\hat {\s}_{n}\circ T^{-1}
=\frac{1}{k}\sum_{i=1}^n \mathbbm{1}(||{\bf
  Z}_i||>||{\bf Z}_{(k+1)}||)\epsilon_{{\bf U}_i}.
\end{equation}
Then we have
\begin{equation}\label{e:ST}
  \hat{\s}_{n,T} \Rightarrow  \mathbb{S}_T
\end{equation}
in $\prob\bigl(T(C^N)\bigr)$. With respect to $\prob_{k+1}(\cdot)=
\prob(\cdot \big | \Vert {\bf Z}_{(k+1)}\Vert )$, the points of
$\mathbb{S}_{n,T} $ are
$k$ iid random elements with common distribution
$\prob({\bf U}_1 \in \cdot \big | \Vert {\bf Z}_1\Vert >r)$ where $r$ is evaluated
at $\Vert {\bf Z} _{(k+1)}\Vert.$ We denote, then
\begin{equation}\label{e:extra}
  \hat{\mathbb{S}}_{n,T}=\frac 1k \sum_{i=1}^k \epsilon _{{\bf
      U}_i^{(k)}}.
  \end{equation}

\begin{definition}
In the positive quadrant,  $\smq $ is the
closure of
\begin{equation}\label{e:Tsupp}
  \bigcup\left\{\Mxm:{\bf x}\in G_m, \, \st(\Mxm)>q \right\}.  
\end{equation}
 So, $\smq$ is a set in $\mathbb{R}^{N-1}$. In the special case $q=0$, the set
defined by $\sm:={\textnormal{\bf{Tsupp}}(m,0)}$ is called the {\em
  rasterised support}  in $\mathbb{R}^{N-1}$ and is the smallest grid set with
resolution $m$ containing the support of $\s_T$. 
\end{definition}

\begin{definition}[Support estimator]\label{Adef} Let ${\bf Z}_1,{\bf
    Z}_2,\ldots,{\bf Z}_n$ be iid multivariate regularly varying
  vectors { in $\mathbb{R}^N_+$. Let ${\bf U}_1,{\bf
    U}_2,\ldots,{\bf U}_n$ be the corresponding random vectors in $\mathbb{R}^{N-1}$ obtained from transformation \eqref{Udef}.} Suppose $k$ and $m$ are natural numbers such that $k\geq 1$
  and $m\geq2$. For $q\in [0,1]$, the support
estimator 
$\smqest$ of $\smq$
is the {closure
of the set}
\begin{equation}\label{e:hatTsupp}
  \bigcup\Bigl\{\Mxm:{\bf x}\in G_m,
\hat{\s}_{n,T} (M({\bf x},m))>q\Bigr\}. 
\end{equation}
The estimator of $\sm $ is $\smest $ which is $\smqest $ with $q=0$.
\end{definition}

The support estimator $\smqest$ is a random {closed} set  based on a random
sample ${\bf Z}_1,{\bf Z}_2,\ldots,{\bf Z}_n$. It has three
parameters: $k$, $m$ and $q$. Parameter $k=k(n)$ is the number of
{extreme observations used in estimation.}
For the asymptotic analysis we
assume that $k=k(n)\to \infty,\;n/k(n)\to \infty$, as $n\to \infty$. Parameter $m$
denotes the resolution at which the estimate is formed. In asymptotic 
results, $m\to\infty$ so that  the resolution grows and the cell size decreases. 
The parameter $q$ serves as a rejection threshold. It determines how many
observations are needed in a single grid cell for the cell to be
accepted as part of the support estimate. In practice it helps to
reject unlikely observations and noise. If $p$ observations are
required in a given sample of $n$ one can set $q=p/k(n)$.  

Support estimators in Definition \ref{Adef} are decreasing in $q$. For
fixed $k,m$ and  $0\leq  q_1<q_2<1$,
\begin{equation}\label{e:ordered}
 {\widehat{\textnormal{\bf{Tsupp}}}(k,m,q_2)} \subset
 {\widehat{\textnormal{\bf{Tsupp}}}(k,m,q_1)}.
 \end{equation}

\subsection{Consistency of the grid based support
  estimator for $q=0$}\label{supportestconsistency}
The following results are derived for the case where the limiting angular
measure concentrates on the positive quadrant $C^N_+$. The general case
is not mathematically much different, but requires more
notation.

We begin by discussing continuity properties of the rasterization
procedure.

\subsubsection{The $\Rast $ operator.}\label{subsubsec:rast}
The $\Rast$ operator maps sets into rasterized versions. We define for
fixed resolution $m$,
$\Rast(\cdot,m): \mathcal{K}(T(C_+^N))\to \mathcal{K}([0,1]^{N-1})$ by
$$\Rast (K,m)=\textnormal{cl}\Bigl(\bigcup \{M({\bf x},m):
M({\bf x},m)\cap K\neq \emptyset\}
\Bigr).$$
So in the notation of \eqref{e:Tsupp},
$\sm=\Rast(\text{supp}(\st) ,m).$

We begin by discussing consistency results
with $m$ fixed.

\begin{proposition}\label{lem:cont}
  Suppose $K$ is a compact set satisfying
  \begin{equation}\label{e:condit}
\forall {\bf x} \in G_m: \quad  K\cap \textnormal{cl}(M({\bf x},m) ) \neq \emptyset \text{ implies }
  K\cap \textnormal{int}(M({\bf x},m) ) \neq \emptyset.
  \end{equation}
  Then $\Rast(\cdot,m)$ is continuous at $K$; that is,
  if $K_n \to K$ in the Hausdorff metric, then also $\Rast(K_n,m) \to
  \Rast(K,m).$
\end{proposition}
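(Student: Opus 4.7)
The plan is to exploit the finiteness of the grid at fixed resolution $m$. The grid $G_m$ contains only $m^{N-1}$ cells, so $\Rast(\cdot,m)$ takes values in the finite collection of compact sets of the form $\textnormal{cl}\bigl(\bigcup_{{\bf x}\in S} M({\bf x},m)\bigr)$ as $S$ ranges over subsets of $G_m$. Any Hausdorff-convergent sequence drawn from a finite family of compact sets must be eventually constant, so continuity at $K$ reduces to showing $\Rast(K_n,m)=\Rast(K,m)$ for all sufficiently large $n$. Cell by cell, this amounts to verifying that for each fixed ${\bf x}\in G_m$ the condition $K_n\cap M({\bf x},m)\neq\emptyset$ eventually coincides with $K\cap M({\bf x},m)\neq\emptyset$, and then taking the maximum of the finitely many thresholds that emerge.

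I would fix ${\bf x}\in G_m$ and split on whether $K$ meets $\textnormal{cl}(M({\bf x},m))$. If $K\cap \textnormal{cl}(M({\bf x},m))=\emptyset$, compactness of both sets yields a positive distance $\delta := d(K, \textnormal{cl}(M({\bf x},m)))>0$, and for $n$ large enough Hausdorff convergence forces $K_n$ to lie within the $\delta/2$-neighborhood of $K$, so $K_n\cap\textnormal{cl}(M({\bf x},m))=\emptyset$ and a fortiori $K_n\cap M({\bf x},m)=\emptyset$. If instead $K\cap\textnormal{cl}(M({\bf x},m))\neq\emptyset$, the hypothesis \eqref{e:condit} furnishes a point ${\bf y}\in K\cap\textnormal{int}(M({\bf x},m))$; I pick $\delta>0$ with $B({\bf y},\delta)\subset\textnormal{int}(M({\bf x},m))$, and Hausdorff convergence supplies a point of $K_n$ within $\delta$ of ${\bf y}$, which therefore lies in $M({\bf x},m)$. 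Note that under \eqref{e:condit} the two cases are equivalent to $K\cap M({\bf x},m)=\emptyset$ and $K\cap M({\bf x},m)\neq\emptyset$ respectively, so they exhaust the required equivalence.

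The subtle point, and the whole reason for the hypothesis \eqref{e:condit}, is the handling of boundary grazing. Without it, a set $K$ could touch only the shared boundary of several cells, and arbitrarily small perturbations $K_n$ could slip into a different selection of cells than $K$, so $\Rast(\cdot,m)$ would not be continuous at $K$. The hypothesis rules out this pathology by ensuring that every closed cell which $K$ meets is actually entered into its open interior, a property that is stable under small Hausdorff perturbations. No such assumption is needed on the approximating sets $K_n$; only $K$ must satisfy \eqref{e:condit}. Assembling the finitely many cell-wise thresholds into a single $n_0$ delivers $\Rast(K_n,m)=\Rast(K,m)$ for all $n\geq n_0$, which is trivially Hausdorff convergence.
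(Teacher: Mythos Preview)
Your argument is correct, and in fact it yields the slightly stronger conclusion that $\Rast(K_n,m)=\Rast(K,m)$ for all large $n$, from which Hausdorff convergence is immediate. The key observation that the range of $\Rast(\cdot,m)$ is a finite family of compact sets, together with the cell-by-cell dichotomy driven by condition~\eqref{e:condit}, is clean and complete.

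The paper takes a different route: rather than arguing eventual equality, it verifies the two Painlev\'e--Kuratowski conditions for Hausdorff convergence directly (every point of $\Rast(K,m)$ is a limit of points in $\Rast(K_n,m)$, and every subsequential limit of points in $\Rast(K_n,m)$ lies in $\Rast(K,m)$). This is a more ``pointwise'' verification that does not explicitly invoke the finiteness of the grid, and it requires somewhat more bookkeeping---tracking interior versus boundary points of cells, extracting sub-subsequences, and so on. Your approach is more elementary and more transparent about \emph{why} continuity holds: the selection of cells stabilizes. The paper's approach has the minor advantage of illustrating the general Hausdorff-convergence machinery that is reused elsewhere, but for this particular proposition your finiteness argument is shorter and arguably more natural.
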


Condition \eqref{e:condit} says that if $K$ intersects 
a cell, it does not do so only on the boundary of the cell.
\begin{proof}
  We use the criterion \cite[page 6]{matheron:1975} that $K_n \to K$ in
  the Hausdorff metric
  iff
   \begin{enumerate}
    \item   ({\sc Condition 1.})
   $z\in K$ implies $\exists z_n \in K_n$ and $z_n\to
   z$. 
       \item   ({\sc Condition 2.})
    For a subsequence $\{n_j\}$, if $z_{n_j} \in K_{n_j}$ and
      $\{z_{n_j}\}$ converges, then $\lim_{n_j \to \infty} z_{n_j} \in
      K$.
    \end{enumerate}

    \underline{Condition 1:} Assume $K_n \to K$ and $\by \in
    \Rast(K,m)$. The easy case is where    $\by \in K$. Then there exist
$\by_n \in K_n\subset \Rast(K_n,m)$ such that $\by_n\to \by \in
K\subset \Rast(K,m)$. This verifies Condition 1 in the easy case.

Now for the more difficult case of Condition 1, assume $\by \in
\Rast(K,m)\setminus K$.
Then there exists $\bx_0 \in G_m$ such that $\by \in
M(\bx_0,m)$. Suppose temporarily $\by \in \textnormal{int}(M(\bx_0,m))$; this
restriction will be removed. Then for some $\delta>0$, (i)
$d(\by,K)\geq 2\delta$; (ii) $B(\by, \delta/36)\subset
\textnormal{int}(M(\bx_0,m))$ (since $\by$ is in the interior of the cell); and
therefore (iii) $B(\by,\delta/36)\cap K =\emptyset$ (from (i)). This
implies
\begin{equation}\label{e:claim}
  K_n \cap M(\bx_0,m)\neq \emptyset,\quad \text{ for all large $n$}.
  \end{equation}
The reason is that
$M(\bx_0,m)\subset \Rast(K,m) $ but $M(\bx_0,m) \cap K \neq \emptyset$
by the choice of $\bx_0$. Condition \eqref{e:condit} implies
$\textnormal{int}(M(\bx_0,m))\cap K \neq \emptyset $ so $\exists \by^* \in
\textnormal{int}(M(\bx_0,m))\cap K.$ Since $
\by^* \in K$ and $K_n \to K$, $\exists \by_n^* \in K_n$ such that
$\by_n^* \to \by^*$. Since $\by^*$ is in the interior of the cell and
because $\by_n^*$ is close to $\by^*$, for all large $n$, $\by_n^* \in
K_n\cap \textnormal{int}(M(\bx_0,m))$ which verifies \eqref{e:claim}.

Now $K_n$ is close to $K$ and from \eqref{e:claim} has 
points in the cell $M(\bx_0,m)$ which miss $B(\by, \delta/36)\subset M(\bx_0,m)$. Find
$\by_n \in B(\by,\delta/36)$ with $\by_n\to \by$. This means $\by_n
\in \Rast(K_n,m)\setminus K_n \subset \Rast(K_n,m)$ and $\by_n\to\by
\in \Rast(K,m)$ as required.

If $\by \in \partial M(\bx_0,m)$ approximate $\by $ by something in
the interior and proceed as above.

\underline{Condition 2:} Given $\by_{n_j} \in \Rast(K_{n_j},m)$ with
$\by_{n_j} \to \by_\infty$ and we must show $\by_\infty \in
\Rast(K,m).$ This means we must find $
\bx_0\in G_m$ such that $\by_\infty \in M(\bx_0,m)$ and
$M(\bx_0,m)\cap K \neq \emptyset.$ Because cells cover the space and
there are a finite number of cells, there is a cell hit by the
elements $\by_{n_j}$ infinitely often. Identify this cell as
$M(\bx_0,m)$. So for this cell and a further subsequence
$\{n_{j'}\}\subset \{n_j\}$, $\by_{n_{j'}} \in M(\bx_0,m)\subset \Rast(K_{n_{j'}},m)$ and therefore
$\by_\infty \in \textnormal{cl}(M(\bx_0,m)).$

To verify $M(\bx_0,m) \subset
\Rast(K,m) $ as required do the following:  Since $\by_{n_{j'}} \in
\Rast(K_{n_{j'}},m)$, there exists $\by^*_{n_{j'}} \in M(\bx_0,m)\cap
K_{n_{j'}} $ and by compactness a further subsequence converges
$\by^*_{n_{j''}} \to \by_\infty^*$ and $\by_\infty^* \in
\textnormal{cl}(M(\bx_0,m))\cap K$ since $ \by^*_{n_{j''}}  \in K_{n_{j''}} \to K.$
Using \eqref{e:condit} once more, this leads to existence of $\by^{**}
\in \textnormal{int}(M(\bx_0,m))\cap K$ which identifies $M(\bx_0,m) \subset
\Rast(K,m) $   so $\by_\infty \in \Rast(K,m).$
\end{proof} 

Now we explain one interpretation of how $\Rast(K,m)$ approximates $K$
and why the approximation gets better with bigger $m$.

\begin{proposition}\label{prop:minfty}
  Given a compact set $K$, as $m\to\infty$,
  $$\Rast(K,m) \to K$$
  in the Hausdorff metric.\end{proposition}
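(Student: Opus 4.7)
The plan is to work directly with the Hausdorff distance rather than the sequential criterion used in the previous proposition, because here the rasterization parameter $m$ is varying rather than the approximating sets. Recall that the Hausdorff distance between compact sets $A,B$ is $d_H(A,B)=\max\{\sup_{a\in A} d(a,B),\,\sup_{b\in B} d(b,A)\}$, and convergence $\Rast(K,m)\to K$ is equivalent to $d_H(\Rast(K,m),K)\to 0$.

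The first (easy) half is to show $K\subset \Rast(K,m)$ for every $m$. Each ${\bf z}\in K$ lies in some cell $M({\bf x}_0,m)$ with ${\bf x}_0\in G_m$ (since the cells partition $[0,1]^{N-1}$), and this cell intersects $K$ by construction, so $M({\bf x}_0,m)\subset \Rast(K,m)$. Hence $\sup_{{\bf z}\in K} d({\bf z},\Rast(K,m))=0$.

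The second half exploits the fact that every cell has small diameter. Explicitly, $\textnormal{diam}(M({\bf x},m))\leq \sqrt{N-1}/m$ with respect to the Euclidean distance, since $M({\bf x},m)\subset {\bf x}+[0,1/m]^{N-1}$. Given any ${\bf y}\in \Rast(K,m)$, by definition of $\Rast$ and closure there exists ${\bf x}_0\in G_m$ with ${\bf y}\in \textnormal{cl}(M({\bf x}_0,m))$ and a point ${\bf z}\in M({\bf x}_0,m)\cap K$. Hence $d({\bf y},K)\leq d({\bf y},{\bf z})\leq \sqrt{N-1}/m$. Taking the supremum over ${\bf y}\in \Rast(K,m)$ yields $\sup_{{\bf y}\in \Rast(K,m)} d({\bf y},K)\leq \sqrt{N-1}/m \to 0$.

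Combining the two estimates gives $d_H(\Rast(K,m),K)\leq \sqrt{N-1}/m\to 0$, which is the claim. The only thing to be careful about is the closure in the definition of $\Rast$; points on the boundary of a cell are handled by noting that the diameter bound applies to $\textnormal{cl}(M({\bf x}_0,m))$ as well. There is no real obstacle here — unlike Proposition \ref{lem:cont}, no non-degeneracy assumption like \eqref{e:condit} is needed because we are only using that cells cover the space and shrink uniformly.
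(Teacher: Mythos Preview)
Your proof is correct and rests on the same two observations as the paper's: $K\subset\Rast(K,m)$ for every $m$, and any point of $\Rast(K,m)$ lies within one cell diameter of $K$. The only difference is cosmetic---you bound the Hausdorff distance directly and thereby obtain the explicit rate $\sqrt{N-1}/m$, whereas the paper verifies the two-part sequential criterion of Matheron already used in Proposition~\ref{lem:cont}.
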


  \begin{proof}
Again we verify the two conditions given at the beginning of the last
proof which are equivalent to convergence in the Hausdorff metric.

\underline{Condition 1:} For $\by \in K$,  there exists $\by_m\equiv
\by \in \Rast(K,m)$ such that $\by_m\to \by$.

\underline{Condition 2:} Given $\{m_j\} $ such that $\by_{m_j} \in
\Rast(K,m_j)$ and $\by_{m_j} \to \by_\infty$;  we must show
$\by_\infty \in K.$ Observe for any $\bx \in G_m$,
$$d(\by_{m_j}, K) \leq \text{diam}\bigl(M(\bx,m_j)\bigr) \to 0, \quad
(m_j\to\infty), $$
and so $d(\by_\infty ,K)=0$ and  $\by_\infty \in K$ as required. \end{proof}

\subsubsection{Convergence of measures and convergence of their
  supports.}\label{subsub:convMeasSupp}
In view of \eqref{e:ST} and Propositions \ref{lem:cont} and
\ref{prop:minfty}, it is natural to think that we can proceed by
estimating the limit measure and then using the rasterized support of this
estimating measure as our estimated support of the limit measure.
To make this work requires a condition. Recall that for a
set $A$ and metric $d(x,y)$, the $\delta$-neighborhood of $A$ is 
$$A^\delta=\{x: d(x,A)<\delta\}.$$

\begin{lemma}\label{lem:measConvSupportConv}
  Suppose for $n\geq 0$ that $m_n(\cdot)$ are Radon measures on a complete separable
  metric space with the support of $m_n$ being the compact set
  $K_n$. If
  $m_n \to m_0$ vaguely, then for all $\delta>0$, there exist
  $n_0=n_0(\delta)$ such that for all $n\geq n_0$,
  $$K_0\subset K_n^\delta.$$
  Additionally, if for $\delta>0$ and sufficiently large $n$,
  \begin{equation}\label{e:cond}
    K_n \subset K_0^\delta,
  \end{equation}
then $K_n \to K_0$ in the Hausdorff topology.
\end{lemma}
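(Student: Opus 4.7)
The plan is to establish the two claims in sequence, using the test-function definition of vague convergence together with compactness of $K_0$.

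For the first claim, fix $\delta>0$ and an arbitrary $\by\in K_0=\suu(m_0)$. Choose a non-negative continuous function $f_\by$ with compact support inside $B(\by,\delta/2)$ and with $f_\by(\by)=1$. Because $f_\by$ is continuous at $\by$ and $\by\in\suu(m_0)$, one has $\int f_\by\,dm_0>0$. Vague convergence gives $\int f_\by\,dm_n\to\int f_\by\,dm_0>0$, so $\int f_\by\,dm_n>0$ for $n$ large (depending on $\by$), which forces the support $K_n$ to meet $B(\by,\delta/2)$ and hence $d(\by,K_n)<\delta/2$. To turn this into a single $n_0$ valid for every $\by\in K_0$, I use compactness: cover $K_0$ by finitely many balls $B(\by_i,\delta/2)$, $i=1,\dots,M$, with $\by_i\in K_0$; for each $i$ pick $n_i$ such that $\int f_{\by_i}\,dm_n>0$ for all $n\geq n_i$; and set $n_0=\max_i n_i$. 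Any $\by\in K_0$ lies in some $B(\by_i,\delta/2)$ which meets $K_n$ for $n\geq n_0$, so the triangle inequality gives $d(\by,K_n)<\delta$, yielding $K_0\subset K_n^\delta$.

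For the second claim, I invoke the characterization of Hausdorff convergence for compact sets used in the proof of Proposition \ref{lem:cont}, namely that $K_n\to K_0$ iff Conditions 1 and 2 stated there both hold. Condition 1, that every $\by\in K_0$ be the limit of some sequence $\by_n\in K_n$, follows from the first claim by taking $\delta=1/j$ in turn and using the bound $d(\by,K_n)<1/j$ for $n\geq n_0(1/j)$ to build an approximating sequence. Condition 2, that every subsequential limit $\by_\infty$ of $\by_{n_j}\in K_{n_j}$ belongs to $K_0$, follows from \eqref{e:cond}: for each $\delta>0$ one has $d(\by_{n_j},K_0)<\delta$ for $j$ sufficiently large, hence $d(\by_\infty,K_0)\leq \delta$ for every $\delta>0$, and closedness of $K_0$ gives $\by_\infty\in K_0$.

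The main subtlety is the uniformization step in the first claim: while the pointwise statement $m_n(B(\by,\delta/2))>0$ eventually is immediate from vague convergence, extracting one $n_0=n_0(\delta)$ that simultaneously handles every point of $K_0$ requires a finite subcover and thus compactness of $K_0$. Condition \eqref{e:cond} must be imposed as a separate hypothesis because vague convergence is asymmetric in exactly this direction: it forces points of $K_0$ to lie near $K_n$, but a small parasitic mass of $m_n$ at a far-away point is almost invisible to compactly supported test functions while enlarging $K_n$ arbitrarily.
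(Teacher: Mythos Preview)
Your proof is correct and follows essentially the same route as the paper. The paper argues pointwise via continuity-set balls rather than compactly supported test functions, and for the second claim simply invokes the $\delta$-neighborhood characterization of Hausdorff convergence (both containments) rather than Matheron's two conditions; these are equivalent formulations of the same idea. One point worth noting: your explicit use of compactness of $K_0$ to extract a finite subcover and produce a single $n_0(\delta)$ valid for all of $K_0$ is an improvement over the paper, which passes from the pointwise statement ``$x\in K_n^\delta$ for large $n$'' to the uniform conclusion ``$K_0\subset K_n^\delta$'' without comment.
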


\begin{proof}
If $x\in K_0$, there exists a $\delta$-neighborhood $B(x,\delta)$ of $x$
satisfying $m_0(\partial B(x,\delta)) = 0$ and $m_0(B(x,\delta))>0$. Then
$m_n(B(x,\delta)) \to m_0(B(x,\delta))>0$
and for large $n$, $m_n(B(x,\delta))>0$. Therefore there exists $x_n
\in K_n \cap B(x,\delta)) $ and $d(x_n,x)<\delta.$ So $x\in  K_n^\delta
$ and thus $x\in K_0$ implies $x\in K_n^\delta$ and $K_0\subset
K_n^\delta$.
This proves the first assertion and the claim $K_n\to K_0$ requires
the second containment in \eqref{e:cond}.
  \end{proof}

  \begin{remark}\label{rem:Oy}
    Without \eqref{e:cond}, it is not necessarily true
    that $K_n \to K_0$. Suppose the metric space is $[0,1]$ {and}
    $m_n=(1-\frac 1n)\epsilon_0 + \frac 1n \epsilon_1,\;
    m_0=\epsilon_0$
    so that $m_n \to m_0$. The supports fail to converge
    and \eqref{e:cond} is violated.
  \end{remark}

  \begin{corollary}\label{cor:rmVer}
    Suppose $M_n, n \geq 0$ are random measures on a metric space with
    metric $d(x,y)$ and with $M_0$ non-random
    and for $n\geq 0$, the support $K_n$ of $M_n$ is compact. Assume
    $M_n\Rightarrow M_0$.
    Then for
    the Hausdorff metric $D(\cdot,\cdot)$ we have $K_n \Rightarrow K_0$
    iff 
    \begin{equation}\label{e:condRnd}
\forall \delta >0, \quad  \prob\left(K_n\subset K_0^\delta   \right) \to 1,
  \quad n\to\infty.
\end{equation}
\end{corollary}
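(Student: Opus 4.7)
The plan is to exploit two structural facts: because $M_0$ is non-random the limiting support $K_0$ is deterministic, so convergence in distribution of $K_n$ to $K_0$ in the Hausdorff topology collapses to convergence in probability, and in a metrizable topology convergence in probability is equivalent to the subsequence criterion that every subsequence has a further almost surely convergent sub-subsequence. The vague topology on $M_+(\mathbb{E})$ is Polish (hence metrizable) when $\mathbb{E}$ is locally compact second countable, and the Hausdorff metric $D(\cdot,\cdot)$ makes $\mathcal{K}$ Polish too, so both sides of the subsequence principle are available.

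For the necessity direction, assume $K_n\Rightarrow K_0$. Since $K_0$ is deterministic this is the same as $D(K_n,K_0)\to 0$ in probability. From the definition of the Hausdorff metric, $D(K_n,K_0)<\delta$ implies in particular $K_n\subset K_0^\delta$, so
\begin{equation*}
\prob(K_n\subset K_0^\delta)\;\geq\;\prob(D(K_n,K_0)<\delta)\;\to\;1,
\end{equation*}
which is \eqref{e:condRnd}.

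For the sufficiency direction, the hypothesis \eqref{e:condRnd} is one of the two containments needed. I would obtain the complementary containment $K_0\subset K_n^\delta$ with probability tending to one from the assumption $M_n\Rightarrow M_0$ combined with the first (easy) assertion of Lemma \ref{lem:measConvSupportConv}. Since $M_0$ is non-random, $M_n\to M_0$ in probability in the vague topology. By the subsequence principle, given any subsequence $\{n'\}$ there is a further subsequence $\{n''\}$ along which $M_{n''}\to M_0$ vaguely almost surely. On this almost sure event the deterministic Lemma \ref{lem:measConvSupportConv} applies path by path: for each fixed $\delta>0$ and a.e.\ $\omega$ there is $n_0(\omega,\delta)$ with $K_0\subset K_{n''}(\omega)^\delta$ for $n''\geq n_0$. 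Thus $\mathbbm{1}\{K_0\subset K_{n''}^\delta\}\to 1$ almost surely along $\{n''\}$, so $\prob(K_0\subset K_{n''}^\delta)\to 1$, and the subsequence principle applied in the other direction yields $\prob(K_0\subset K_n^\delta)\to 1$ for the full sequence. Combining with \eqref{e:condRnd} gives $\prob(D(K_n,K_0)<\delta)\to 1$ for every $\delta>0$, i.e.\ $D(K_n,K_0)\to 0$ in probability, which since $K_0$ is deterministic is the same as $K_n\Rightarrow K_0$.

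The main obstacle is really the plumbing rather than any essential estimate: one must know that vague convergence of Radon measures is metrizable so that the subsequence characterization of convergence in probability is legitimate, and one must verify that the random compact sets $K_n$ are measurable with respect to the Hausdorff Borel structure so statements like $\prob(K_0\subset K_n^\delta)$ make sense. Once those measurability concerns are dispatched, the proof reduces to the subsequence principle plus a pointwise invocation of Lemma \ref{lem:measConvSupportConv}; Remark \ref{rem:Oy} shows that the hypothesis \eqref{e:condRnd} cannot be removed.
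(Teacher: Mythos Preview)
Your argument is correct and follows the same route as the paper: reduce $K_n\Rightarrow K_0$ to the two Hausdorff containments holding with high probability, note that $K_0\subset K_n^\delta$ is automatic from $M_n\Rightarrow M_0$ via the first assertion of Lemma~\ref{lem:measConvSupportConv}, so only \eqref{e:condRnd} needs to be assumed. The paper's proof leaves the passage from the deterministic Lemma~\ref{lem:measConvSupportConv} to its random analogue implicit, whereas you spell it out carefully through the subsequence principle; this extra detail is sound and arguably an improvement in rigor, not a different approach.
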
    
\begin{proof}
  We must show for any $\eta>0$,
$\prob(D(K_n,K_0)\leq \eta) \to 1.$
  However, this probability convergence is equivalent to
$$\prob\left(K_n \subset K_0^\eta, K_0 \subset K_n^\eta\right)\to 1, \quad
\forall \eta>0.$$
It is only necessary to control $\prob(K_n\subset K_0^\eta).$
\end{proof}

\noindent {\bf Remark \ref{rem:Oy}} (continued). Let $X_{nj}, 1 \leq j \leq n$
be iid with distribution
$F_n=(1-\frac 1n) \epsilon_0 + \frac 1n \epsilon_1,$ for each $n\geq 1$.
Then
$M_n:=\frac 1n \sum_{j=1}^n \epsilon_{X_{nj}} \Rightarrow
\epsilon_0=:M_0,$ but
$K_n=\{X_{nj},1 \leq j\le n\} \not\to K_0=\{0\} $
and \eqref{e:condRnd} fails since
\begin{align*}
  \prob(\{X_{nj},1 \leq j\leq n\}&\subset K_0^\eta)=\prob\left(\bigvee_{j=1}^n
                               X_{nj} \leq \eta\right)\\
  =&(\prob(X_{n1}=0))^n =\left(1-\frac 1n\right)^n \to e^{-1} <1.\end{align*}

\begin{corollary}\label{cor:suff}
Assume the conditions of Corollary \ref{cor:rmVer} hold and $M_n$ is
of the form
$$M_n=\frac 1k  \sum_{i=1}^k \epsilon_{\bTheta_i^{(k))}}, $$
for $\{\bTheta_i^{(k)} ,1\leq i\leq k\}$ iid.  Then
$$K_n:=\suu(M_n)=\{\bTheta_i^{(k)},1\leq k\leq k\}  \Rightarrow \suu(M_0)=K_0$$
iff
\begin{equation}\label{e:anotherOne}
  k\prob\left(d(\bTheta_1^{(k)},K_0)>\eta\right)\to 0,\quad (\forall
  \eta>0,\,k\to\infty).
  \end{equation}
\end{corollary}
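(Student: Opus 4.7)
The plan is to reduce the statement to an elementary computation using Corollary \ref{cor:rmVer} and then apply a routine analytic lemma about $(1-p_k)^k$.

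First, I would invoke Corollary \ref{cor:rmVer}: since $M_n \Rightarrow M_0$ is assumed (this is part of the standing hypotheses of Corollary \ref{cor:rmVer}) and all supports are compact, we have $K_n \Rightarrow K_0$ in the Hausdorff topology if and only if $\prob(K_n \subset K_0^\delta) \to 1$ for every $\delta > 0$. Because $M_n$ is a uniform weighted sum of point masses at the iid points $\bTheta_1^{(k)},\ldots,\bTheta_k^{(k)}$, its support is the finite (hence closed) set $K_n = \{\bTheta_i^{(k)} : 1 \leq i \leq k\}$. Thus the containment $K_n \subset K_0^\delta$ is equivalent to the event $\{d(\bTheta_i^{(k)},K_0) < \delta \text{ for all } i=1,\ldots,k\}$.

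Next, by the iid assumption this containment probability factors:
\begin{equation*}
\prob(K_n \subset K_0^\delta) \;=\; \bigl(1 - p_k(\delta)\bigr)^k,
\qquad p_k(\delta) := \prob\bigl(d(\bTheta_1^{(k)},K_0) \geq \delta\bigr).
\end{equation*}
I would then invoke the elementary real-analysis fact that for any sequence $p_k \in [0,1]$, $(1-p_k)^k \to 1$ if and only if $k p_k \to 0$ (using $\log(1-p_k) = -p_k(1+o(1))$ when $p_k\to 0$, and noting that $(1-p_k)^k \to 1$ forces $p_k\to 0$). Combining with the previous paragraph shows $\prob(K_n \subset K_0^\delta)\to 1$ for every $\delta>0$ if and only if $k\,p_k(\delta)\to 0$ for every $\delta>0$.

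The only remaining nuisance is the mismatch between the weak inequality defining $p_k(\delta)$ and the strict inequality appearing in \eqref{e:anotherOne}. I would resolve this by the sandwich
\begin{equation*}
\prob\bigl(d(\bTheta_1^{(k)},K_0) > \delta\bigr) \;\leq\; p_k(\delta) \;\leq\; \prob\bigl(d(\bTheta_1^{(k)},K_0) > \delta/2\bigr),
\end{equation*}
so that quantifying over all $\eta>0$ in \eqref{e:anotherOne} is equivalent to quantifying over all $\delta>0$ in $k\,p_k(\delta)\to 0$. I expect no real obstacle; the main point requiring care is simply to check that the support of the empirical measure $M_n$ is exactly the set of atoms (trivial, since it is a finite sum of point masses) so that the elementary product formula for $\prob(K_n \subset K_0^\delta)$ applies without having to grapple with a topological closure.
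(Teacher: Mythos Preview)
Your proposal is correct and follows essentially the same route as the paper: apply Corollary~\ref{cor:rmVer}, factor $\prob(K_n\subset K_0^\eta)$ as $(1-p_k)^k$ using the iid assumption, and appeal to the elementary equivalence $(1-p_k)^k\to 1 \iff kp_k\to 0$. The paper's proof is terser and glosses over both the $(1-p_k)^k$ lemma and the strict-versus-weak inequality issue you carefully address, but the argument is the same.
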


\begin{proof}
  Apply Corollary \ref{cor:rmVer} and note
  \begin{align*}
    \prob\Bigl( \{\bTheta_i^{(k)} & ,1\leq i\leq k\} \subset K_0^\eta \Bigr)=
                                                                    \prob\Bigl(
                                                                         \bigcap_{i=1}^k
                                                                         \{d(\bTheta_i^{(k)},K_0)\leq
                                                                         \eta\}
                                                                         \Bigr)\\
    =&   \Bigl(\prob\Bigl(d(\bTheta_1^{(k)},K_0)\leq \eta\Bigr)\Bigr)^k
    =\left(1-\frac{k\prob\left(d(\bTheta_1^{(k)},K_0)>\eta \right)  }{k}\right)^k.
  \end{align*}
This converges to $1$ iff \eqref{e:anotherOne} holds.
\end{proof}  

\subsubsection{Consistency.}\label{subsub:consistent}
We now consider consistency of the grid based support estimator.

\begin{theorem}\label{Hdistconvergence}
Let  $\{{\bf Z}_1,\dots, {\bf Z}_n \}$ be a random sample from a
regularly varying distribution  assumed for simplicity to 
concentrate on the positive quadrant  $\mathbb{R}_+^N$. Set
$\bTheta_i={\bf Z}_i/\|{\bf Z}_i\|$, and $\bU_i=T(\bTheta_i).$
Recall the definitions of $\s, \st, \s_{n,T}$ and
$\smest $. For $K=\text{{\bf supp}}(\st)$ assume \eqref{e:condit}
holds for every $m$ and that
the points of \eqref{e:extra} satisfy \eqref{e:anotherOne}.
Then for $n\to\infty, \,k=k(n)\to\infty, \,k/n \to 0$ and
$m=m(n) \to \infty$
\begin{equation}\label{eq:consistency}
  \Rast(\text{{\bf supp}}(\s_{n,T}), m(n))
  ={\widehat{\textnormal{\bf{Tsupp}}}(k(n),m(n))}\Rightarrow
  {\suu(\st)}
\end{equation}
in $\mathcal{K}(T(C_+^{N}))$ metrized by the Hausdorff metric $D$. Equivalently,
\begin{equation*} 
D\left({\widehat{\textnormal{\bf{Tsupp}}}(k(n),m(n))},{\suu(\st)}\right)\stackrel{P}{\to}0,
\quad n\to\infty
\end{equation*}
or 
synonomously, for any $\delta>0$, 
\begin{equation}\label{th2eq21}
\prob\left({\widehat{\textnormal{\bf{Tsupp}}}(k(n),m(n))}\subset({\suu(\st)})^\delta\right)\to 1, \quad n\to \infty
\end{equation}
and
\begin{equation}\label{th2eq22}
\prob\left({\suu(\st)} \subset
{\widehat{\textnormal{\bf{Tsupp}}}(k(n),m(n))}^\delta\right)\to
1, \quad n\to \infty,
\end{equation}
where recall for a set $A$, $A^\delta$ is the $\delta$-neighborhood of $A$.
\end{theorem}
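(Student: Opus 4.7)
The plan is to establish the two Hausdorff containments \eqref{th2eq21} and \eqref{th2eq22} separately, since together they imply $D(\widehat{\textnormal{\bf{Tsupp}}}(k(n),m(n)),\suu(\st))\to 0$ in probability, which in turn gives the weak convergence in \eqref{eq:consistency} (as in the proof of Corollary \ref{cor:rmVer}). Write $K=\suu(\st)$ and let $\bU_i^{(k)}$, $i=1,\ldots,k$, denote the atoms of $\hat{\mathbb{S}}_{n,T}$ as in \eqref{e:extra}. Recall that $\widehat{\textnormal{\bf{Tsupp}}}(k,m)$ is precisely the union of the closed cells $\textnormal{cl}(M(\bx,m))$ such that at least one $\bU_i^{(k)}$ lies in $M(\bx,m)$, and each cell has diameter $\sqrt{N-1}/m$.

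For the \emph{upper containment} \eqref{th2eq21}, I would first apply Corollary \ref{cor:suff} with the hypothesis \eqref{e:anotherOne} to conclude that
\begin{equation*}
\prob\Bigl(\max_{1\leq i \leq k}d(\bU_i^{(k)},K)\leq \delta/2\Bigr)\to 1
\end{equation*}
as $n\to\infty$. Choose $n$ large enough that also $\sqrt{N-1}/m(n)<\delta/2$. Any point $\by$ of $\widehat{\textnormal{\bf{Tsupp}}}(k(n),m(n))$ lies in some cell that contains an atom $\bU_i^{(k)}$, so $d(\by,\bU_i^{(k)})<\delta/2$, and composing the two bounds yields $d(\by,K)<\delta$. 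Hence \eqref{th2eq21} holds.

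For the \emph{lower containment} \eqref{th2eq22}, the compactness of $K$ lets me cover $K$ by finitely many balls $B(\by_j,\delta/4)$, $j=1,\ldots,J$, with $\by_j\in K$. By a standard argument (only countably many radii have $\st$-charged boundary), I may choose the radii so that $\st(\partial B(\by_j,\delta/4))=0$; each ball then has $\st$-positive measure by the definition of support. Applying \eqref{e:ST} and the portmanteau theorem gives
\begin{equation*}
\prob\bigl(\hat{\s}_{n,T}(B(\by_j,\delta/4))>0 \text{ for all } j\bigr)\to 1,
\end{equation*}
so with high probability each $B(\by_j,\delta/4)$ contains at least one atom $\bU_i^{(k)}$. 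The cell of $M(\bx,m(n))$ containing this atom is part of $\widehat{\textnormal{\bf{Tsupp}}}(k(n),m(n))$; once $\sqrt{N-1}/m(n)<\delta/4$, any point of that cell is within $\delta/2$ of $\by_j$. Any $x\in K$ is within $\delta/4$ of some $\by_j$, hence within $3\delta/4<\delta$ of $\widehat{\textnormal{\bf{Tsupp}}}(k(n),m(n))$, giving \eqref{th2eq22}.

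The main delicate point is coordinating the two limits $n\to\infty$ and $m(n)\to\infty$: the probability bounds depend on $n$ through $\hat{\s}_{n,T}\Rightarrow \st$ and \eqref{e:anotherOne}, while the metric bounds depend on $m$ only through the deterministic cell diameter $\sqrt{N-1}/m$. Because the required threshold on $m$ depends only on $\delta$ and $N$, picking $n$ so large that both (i) $\sqrt{N-1}/m(n)<\delta/4$ and (ii) the relevant empirical events have probability exceeding $1-\eta$, suffices. One can also view the argument more abstractly as (a) Corollary \ref{cor:suff} giving $\suu(\hat{\s}_{n,T})\Rightarrow K$ in Hausdorff distance, (b) continuity of $\Rast(\cdot,m)$ at $K$ via Proposition \ref{lem:cont} under \eqref{e:condit} giving $\Rast(\suu(\hat{\s}_{n,T}),m)\Rightarrow \Rast(K,m)$ for each fixed $m$, and (c) Proposition \ref{prop:minfty} allowing $m\to\infty$; the slightly awkward piece is that (b) and (c) must be combined along the sequence $m=m(n)$, which is most cleanly handled by the direct two-containment argument above.
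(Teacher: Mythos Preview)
Your argument is correct, and it takes a genuinely different route from the paper's proof. The paper uses a Slutsky-style diagonal scheme: with $X_{mn}=\Rast(\suu(\hat\s_{n,T}),m)$, $X_m=\Rast(K,m)$, $Y_n=\Rast(\suu(\hat\s_{n,T}),m(n))$, and $X=K$, it verifies (i) $X_{mn}\Rightarrow X_m$ for fixed $m$ via Corollary~\ref{cor:suff} and the continuity of $\Rast(\cdot,m)$ at $K$ (Proposition~\ref{lem:cont}, which is where \eqref{e:condit} enters), (ii) $X_m\to X$ via Proposition~\ref{prop:minfty}, and (iii) a deterministic bound $D(\Rast(F,m_1),\Rast(F,m_2))\leq 1/m_1$ for finite $F$ and $m_1<m_2$, proved by a direct cell-geometry argument. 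Your approach bypasses the Slutsky machinery entirely and works directly with the two $\delta$-containments, using \eqref{e:anotherOne} for the upper bound and a finite $\st$-continuity-ball cover of $K$ for the lower bound. What your approach buys is simplicity and, notably, that you never actually invoke \eqref{e:condit}: your direct argument shows the result holds without that boundary condition on $K$, whereas the paper's route needs it to apply Proposition~\ref{lem:cont}. What the paper's approach buys is modularity---it reuses Propositions~\ref{lem:cont} and~\ref{prop:minfty} rather than redoing their content---and it makes the separation of the $n$-limit and the $m$-limit completely explicit via the three-condition template.
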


\begin{proof}
We use a standard Slutsky style approach outlined for instance in
\cite[page 56]{MR2271424}: Suppose that
$\{X_{mn},X_m,Y_n,X; n \geq 1,m\geq 1\}$ are  random elements of a
metric space with metric $D(\cdot,\cdot)$ defined on a common
domain. Assume
\begin{enumerate}
\item  For each fixed $m$, as $n \to \infty$,
  \begin{equation}\label{e:i}
    X_{mn} \Rightarrow X_m.\end{equation}
\item As $m \to \infty $
  \begin{equation}\label{e:ii}
    X_m \Rightarrow X.\end{equation}
\item For all $\epsilon >0$,
\begin{equation}\label{e:iii}
\lim_{m \to \infty} \limsup_{n \to \infty}\prob\left(D(X_{mn},Y_n)>\epsilon\right)
=0.\end{equation}
Then, as $n \to \infty$,  we have
$$
Y_n \Rightarrow X.$$
\end{enumerate}
In our context, the metric space is compact subsets
$\mathcal{K}([0,1]^{N-1})$, $D$ is the Hausdorff metric and
\begin{align*}
  X_{mn}=&\Rast(\text{{\bf{supp}}}(\s_{Tn}),m)=\smest,& Y_n=&
                                                       \Rast(\text{{\bf{supp}}}(\s_{Tn}),m(n))  \\
  X_m=&\Rast(\text{{\bf{supp}}}(\s_T),m)& X=&\text{{\bf{supp}}}(\s_T).
\end{align*}
The assumptions give convergence results $\{{\bf U}_i^{(k)}, 1\leq i\leq k\}\Rightarrow
{\bf{supp}}(\s_T)$
and $\Rast({\bf{supp}} (\s_{T,n} ),m)\Rightarrow \Rast({\bf{supp}}(\s_T),m),$
which is convergence for fixed $m$ in \eqref{e:i}
and \eqref{e:ii} is
covered by Proposition \ref{prop:minfty} so we focus on proving
\eqref{e:iii}.

To do this, suppose $K=\{\bz_1,\dots, \bz_k\}$ is a
discrete set of distinct points and $m_1<m_2$. We {\sc claim} that 
\begin{equation}\label{e:claim2}
D\bigl(\Rast(K,m_1),\Rast(K,m_2)\bigr) \leq 1/m_1. \end{equation}

Start by assuming $k=N-1=1$ and $\bz_1= z\in (0,1)$; if $z=0$ or $z=1$, one can
check the result separately.  For $i=1,2$, the $m_i$-resolution cell covering $z $ is 
$[a_i,b_i)=\bigl[[zm_i]/m_i,( [zm_i]+1)/m_i\bigr)$ of width $1/m_i$
and the
usual  large
$n$-scenario is that $[a_2,b_2)\subset [a_1,b_1)$ so that the Hausdorf
distance $D\bigl([a_1,b_1),[a_2,b_2)\bigr)$
between the two intervals is bounded by
\begin{align*}
  (a_2-a_1)\vee (b_1-b_2)=&
   \left( \frac{[zm_2]}{m_2} -\frac{[zm_1]}{m_1}\right)
    \bigvee    \left(\frac{[zm_1]+1}{m_1}  -\frac{[zm_2]+1}{m_2}
    \right)\\
  \leq  &1/m_1,
\end{align*}
the width of the larger grid  interval. If the nesting between $[a_1,b_1)$
and $[a_2,b_2)$ is other than described, a similar argument shows the
Hausdorf distance is still bounded by $1/m_1$.

If $N-1=1$ and $k>1$ and the points are $z_1,\dots , z_k$,
suppose $m_1,m_2$ are large enough that
if a cell contains a point at either resolution, it contains only one
point.
The grid
intervals at resolution $m_i$ ($i=1,2$) are $[a_{il},b_{il})=
\bigl[[z_im_i]/m_i, ([z_im_i]+1)/m_i\bigr);\,
l=1,\dots,k$ and the large-$n$ scenario is that $[a_{2l},b_{2l})
\subset [a_{1l},b_{1l}), \,l=1,\dots, k.$ In this case 
the Hausdorff distance between the two grids is
$$\bigvee_{l=1}^k \left(\left(
\frac{[z_l m_2]}{m_2 } -\frac{[z_lm_1]}{m_1}\right) \bigvee
  \left(\frac{[z_l m_1]+1}{m_1} -\frac{[z_lm_2]+1}{m_2}\right) \right) \leq \frac
  {1}{m_1}$$
by the same reasoning as in the $k=1$ case. If the containments are not
as described in the large $n$-scenario, similar arguments suffice.

Now allow $N>2$ and $k>1$. Euclidean distance is equivalent to metric
$$d_\vee (\bx,\by)= \bigvee_{j=1}^{N-1} |x^{(j)}-y^{(j)}|$$
and using this metric in the Hausdorf metric shows that the Hausdorf
distance bound is still $\leq 1/m_1$. This verifies \eqref{e:claim2}.
 
To verify \eqref{e:iii}, for the probability, choose $m$ big enough
that $1/m <\epsilon$ and $n$ big enough that $m(n)>m$. Then \eqref{e:iii} is clear.
\end{proof}  

\begin{remark}
Convergence in the sense of Theorem \ref{Hdistconvergence} does not
guarantee that the approximation
{${\widehat{\textnormal{\bf{Tsupp}}}(k(n),m(n))}$} covers
the support $\suu(\st)$.
In fact, if $m(n)$ grows rapidly enough with respect to $k(n)$, as $n\to \infty$, the approximation may have zero Lebesgue measure in the limit. 
\end{remark}

\subsection{Consistency of the grid based estimator for $q>0$.}\label{sec:consistencyforposq}
\begin{proposition}[Fixed $m$-consistency of the grid
  estimator]\label{th1} {Suppose the vectors $\{{\bf Z}_1,\dots, {\bf Z}_n \}$ form a random sample from a
regularly varying distribution that 
concentrates on the positive quadrant $\mathbb{R}^N_+$. Definition \ref{Adef}
defines the random set $\smqest$.  } 
For 
  $m\geq 2$, fix  ${\bf x}  \in G_m$ and assume
\begin{equation}\label{exset} 
q\in (0,1),    \quad q \neq     \st(M({\bf x},m)).
\end{equation}
Set
\begin{equation}\label{e:1}
  \hat{\mathbbm{1}}_x=\mathbbm{1}(M({\bf x},m)\subset  \smqest),
  \quad
{\mathbbm{1}}_x=\mathbbm{1}(M({\bf x},m)\subset \smq) .
  \end{equation}
Then 
as $n\to\infty$, $k(n)\to \infty$
  and  $n/k(n)\to \infty,$
\begin{align}\label{sprobability}
  \prob (\mathbbm{1}_x=\hat{\mathbbm{1}}_x)
      \to 1. 
\end{align}
Since the set $G_m$ is finite, it is also true that
\begin{align}
  \prob \left(\bigcap_{x\in G_m}\{\mathbbm{1}_x=\hat{\mathbbm{1}}_x\}\right)
                                     \to 1. \label{e:all=}
\end{align}
\end{proposition}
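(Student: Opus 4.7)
The plan is to reduce the set inclusion $M({\bf x},m)\subset\smqest$ to a scalar comparison between $\hat{\s}_{n,T}(M({\bf x},m))$ and the threshold $q$, and then apply the weak convergence \eqref{e:ST} via the continuous mapping theorem.

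First I argue that
\begin{equation*}
\{M({\bf x},m)\subset\smqest\}=\{\hat{\s}_{n,T}(M({\bf x},m))>q\},\qquad\{M({\bf x},m)\subset\smq\}=\{\st(M({\bf x},m))>q\}.
\end{equation*}
The $\supset$ direction is immediate from \eqref{e:hatTsupp}. For the $\subset$ direction, observe that the cells $\{M({\bf x}',m):{\bf x}'\in G_m\}$ are pairwise disjoint half-open boxes tiling $[0,1]^{N-1}$, so taking the closure in \eqref{e:hatTsupp} only appends boundary faces of accepted neighbouring cells and cannot fill the interior of $M({\bf x},m)$ unless that cell is itself accepted. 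Consequently $\hat{\mathbbm{1}}_x=\mathbbm{1}(\hat{\s}_{n,T}(M({\bf x},m))>q)$ and $\mathbbm{1}_x=\mathbbm{1}(\st(M({\bf x},m))>q)$.

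Next I invoke $\hat{\s}_{n,T}\Rightarrow\st$ from \eqref{e:ST}. Assuming the cell $M({\bf x},m)$ is a $\st$-continuity set, i.e.\ $\st(\partial M({\bf x},m))=0$, the portmanteau theorem gives $\hat{\s}_{n,T}(M({\bf x},m))\stackrel{P}{\to}\st(M({\bf x},m))$. The hypothesis \eqref{exset} that $q\neq\st(M({\bf x},m))$ makes $t\mapsto\mathbbm{1}(t>q)$ continuous at $t=\st(M({\bf x},m))$, so the continuous mapping theorem yields $\hat{\mathbbm{1}}_x\stackrel{P}{\to}\mathbbm{1}_x$. Since both indicators are $\{0,1\}$-valued, this is equivalent to \eqref{sprobability}. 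The simultaneous version \eqref{e:all=} follows from the union bound applied over the $m^{N-1}$ cells of $G_m$.

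The main subtlety is ensuring the continuity-set condition $\st(\partial M({\bf x},m))=0$. Since $\st$ is a probability measure on a finite-dimensional simplex, at most countably many axis-aligned hyperplanes can carry positive $\st$-mass, so a generic shift of the grid (as noted after Definition \ref{Tdefinition}) avoids them. If this condition failed, the sandwich $\st(\textnormal{int}(M))\le\liminf_n\hat{\s}_{n,T}(M)\le\limsup_n\hat{\s}_{n,T}(M)\le\st(\textnormal{cl}(M))$ still handles any $q$ lying outside the closed interval $[\st(\textnormal{int}(M)),\st(\textnormal{cl}(M))]$, but the case of $q$ strictly inside this interval with $q\neq\st(M)$ would require additional argument and is implicitly excluded.
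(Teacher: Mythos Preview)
Your proof is correct and follows essentially the same route as the paper: reduce $\hat{\mathbbm{1}}_x$ to the event $\{\hat{\s}_{n,T}(M({\bf x},m))>q\}$, invoke \eqref{e:ST} to get $\hat{\s}_{n,T}(M({\bf x},m))\stackrel{P}{\to}\st(M({\bf x},m))$, and conclude via the assumption $q\neq\st(M({\bf x},m))$. The paper splits into the two cases $\st(M({\bf x},m))>q$ and $\st(M({\bf x},m))<q$ rather than phrasing it through the continuous mapping theorem, but this is cosmetic.

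One point worth noting: you are actually more careful than the paper. The paper asserts $\hat{\s}_{n,T}(M({\bf x},m))\stackrel{P}{\to}\st(M({\bf x},m))$ directly from weak convergence without comment, whereas you correctly flag that this step requires $M({\bf x},m)$ to be an $\st$-continuity set, and you indicate how a grid shift (already mentioned after Definition~\ref{Tdefinition}) handles this. Your explicit justification that $M({\bf x},m)\subset\smqest$ forces the cell itself to be accepted---because the closure in \eqref{e:hatTsupp} cannot capture interior points of a non-accepted cell---is also a detail the paper leaves implicit.
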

\begin{proof}
  Suppose first that $\st(M({\bf x},m))>q$ so that
  $$\mathbbm{1}_x=\mathbbm{1}(M({\bf x},m)\subset \smq)=1.$$
  Then the probability in  \eqref{sprobability} can be written as
$$\prob (\hat{ \s}_{n,T}(M({\bf x},m))>q)$$
and by \eqref{e:ST},
$\hat \s_{n,T} (M({\bf 
  x},m)) \stackrel{P}{\to} \st (M({\bf x},m))$.
Since
$\st(M({\bf x},m))> q$ by assumption, \eqref{sprobability} follows.

If $\st(M({\bf x},m))<q$, then  $\mathbbm{1}(M({\bf x},m)\subset
\smq)=0$ and the proof mimics the first case, but the
probability is
$\prob ( \hat \s_{n,T} (M({\bf x},m)) \leq q).$
\end{proof}

 Condition \eqref{exset} {ensures that convergence occurs for fixed $m$.}

\begin{corollary}
Suppose the assumptions of Proposition \ref{th1} hold. Then
 for fixed $m$ and $q\in (0,1)\backslash \{\st(M({\bf x},m)) : \, {\bf x} \in G_m\}$,
\begin{equation}\label{cor1eq21}
\lim_{n\to\infty}\prob\left(\smqest=\smq \right) =1.
\end{equation}
\end{corollary}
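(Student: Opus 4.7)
The strategy is to read this corollary off directly from Proposition \ref{th1}, specifically equation \eqref{e:all=}. The hypothesis that $q \notin \{\st(\Mxm) : {\bf x} \in G_m\}$ is designed precisely so that condition \eqref{exset} of the proposition holds simultaneously for every ${\bf x} \in G_m$; hence \eqref{e:all=} applies and yields
\begin{equation*}
\prob\Bigl(\bigcap_{{\bf x}\in G_m}\{\mathbbm{1}_x = \hat{\mathbbm{1}}_x\}\Bigr) \to 1.
\end{equation*}
It therefore suffices to prove the deterministic implication that on the event $E_n := \bigcap_{{\bf x}\in G_m}\{\mathbbm{1}_x = \hat{\mathbbm{1}}_x\}$ one has $\smqest = \smq$.

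The core observation for this implication is that the cells $\{\Mxm:{\bf x}\in G_m\}$ are pairwise disjoint, since by \eqref{eq:cellDef} they are the half-open boxes $[{\bf x},{\bf x}+\tfrac1m\bone)$ that tile $[0,1]^{N-1}$. Consequently, interior points of any given cell $\Mxm$ belong to the closure of that cell alone and to no other cell's closure. Since both $\smq$ and $\smqest$ are defined as closures of finite unions of cells indexed by $G_m$, it follows that for each ${\bf x}\in G_m$,
\begin{equation*}
\mathbbm{1}_x = \mathbbm{1}\bigl(\st(\Mxm) > q\bigr), \qquad
\hat{\mathbbm{1}}_x = \mathbbm{1}\bigl(\hat{\s}_{n,T}(\Mxm) > q\bigr),
\end{equation*}
where the assumption $q \neq \st(\Mxm)$ rules out the borderline case. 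Thus on $E_n$ the selected collection of cells is identical for the two unions in \eqref{e:Tsupp} and \eqref{e:hatTsupp}, and their closures are therefore the same set; i.e., $\smqest = \smq$.

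Combining the two steps gives
\begin{equation*}
\prob\bigl(\smqest = \smq\bigr) \geq \prob(E_n) \to 1,
\end{equation*}
proving \eqref{cor1eq21}. The only place requiring any care is the equivalence $\mathbbm{1}_x = 1 \Leftrightarrow \st(\Mxm) > q$; without the hypothesis excluding the critical values $\st(\Mxm)$, a cell of measure exactly $q$ would be indeterminate and the one-line set-equality argument above would fail. The disjointness of the half-open cells is what makes the chain airtight.
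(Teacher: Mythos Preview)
Your proof is correct and follows essentially the same approach as the paper: both apply Proposition \ref{th1} (specifically \eqref{e:all=}) to the finite intersection $E_n=\bigcap_{{\bf x}\in G_m}\{\mathbbm{1}_x=\hat{\mathbbm{1}}_x\}$ and then identify this event with $\{\smqest=\smq\}$. The paper states the identification in one line, whereas you spell out the disjointness argument for the half-open cells; one small quibble is that your remark that ``the assumption $q\neq\st(\Mxm)$ rules out the borderline case'' is misplaced---the deterministic equivalence $\mathbbm{1}_x=\mathbbm{1}(\st(\Mxm)>q)$ holds regardless, and the exclusion of critical $q$-values is needed only so that Proposition \ref{th1} applies to every cell.
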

\begin{proof}
The event in \eqref{cor1eq21}
is a finite intersection of events of the form {$\{
  \mathbbm{1}(G\subset \smqest)=\mathbbm{1}(G\subset \smq)\}$ that
  have probability $1$ in the limit $n\to \infty$. The result follows
  using Proposition \ref{th1}.} 
\end{proof}

Theorem \ref{Hdistconvergence} considers only the case where the grid size tends to zero and $q=0$. However, since the estimators are used with positive parameter values of $q$, the content of the theorem should also hold when $q$ is not zero, but a function of $n$ that tends to zero, as $n$ grows. This result follows immediately once we note using Definition \eqref{e:hatTsupp} that 
\begin{equation}\label{estequality}
\widehat{\textnormal{\bf{Tsupp}}}(k,m,0)=\widehat{\textnormal{\bf{Tsupp}}}(k,m,q)
\end{equation}
holds almost surely when $q$ is small enough with respect to $k$. More precisely, if $q=q(n)$ satisfies 
$$\limsup_{n\to \infty} k(n)q(n)<1,$$
then \eqref{estequality} holds eventually in $n$ and we can replace $q=0$ by $q=q(n)$ in the statement of the theorem.

\section{Asymptotic independence}\label{asindsection}

Asymptotic independence is a more general property than
independence and is suitable for considering the influence of extreme
values.  If a random vector has asymptotically independent components,
a large component of the vector gives little information about the
likelihood of
other components being large.

Asymptotic independence is a
dependence structure in which vector realizations
containing multiple large components are unlikely
and from a practical {risk}  viewpoint,
asymptotically independent components are as harmless as independent
components. Thus omitting asymptotically independent {subsets of}
  components {from the vector}
analysis is a way to reduce the dimension of a studied
system. Doing so should increase
 the accuracy of estimates of the asymptotic support of the angular
 measure which is useful because typically only a limited
 amount of data is available. The topic of dimension reduction in
models with extremal dependence is also discussed in
\cite{MR3698112,Scholkopf:2001:ESH:1119748.1119749}.

{We review the}  definition of asymptotic independence 
which is compatible with existing
literature (e.g. \cite[p~195]{MR2271424}) and is applicable to 
several groups of
components. Overlapping approaches {include} 
\cite{davis:mikosch:2009,MR3539306,davis:mikosch:cribben:2012,MR3698112}.
Definition \ref{asinddef} {assumes} marginals
are heavy-tailed. The behavior of vectors composed of
sufficiently light-tailed iid components is different. See
\cite{MR3412770} for the two dimensional
case.  

\subsection{Definition of asymptotic independence of MRV}

\begin{definition}\label{asinddef}[Asymptotic independence for MRV]
  Suppose ${\bf Z} \geq {\bf 0}$ has a regularly varying
  multivariate distribution with scaling function $b(\cdot)$. Let $A_1,A_2
  \subset \{1,2,\ldots,N\}$ and suppose $\#A_1=N_1$ and
  $\#A_2=N_2$, {where $N_1,N_2\geq 1$}. {The} component ${\bf Z}_{A_1}:=(Z^{(i)})_{i\in
    A_1}$ is {\em asymptotically independent} of  component ${\bf
    Z}_{A_2}:=(Z^{(i)})_{i\in A_2} $ if 
\begin{equation}\label{asinddefeq}
\lim_{t\to\infty}
t \prob \left(\frac{{\bf Z}_{A_1}}{b(t)}\in B_1,\frac{{\bf Z}_{A_2}}{b(t)}\in B_2  \right)=0,
\end{equation}
 for all {Borel} $B_1\subset \mathbb{R}^{N_1}$,  $B_2\subset
 \mathbb{R}^{N_2}$ such that $d(B_i,{\bf 0})>0,\;i=1,2$.
\end{definition}

\begin{remark} It may be assumed without loss of generality that the
  sets $B_1$ and $B_2$ in \eqref{asinddefeq} are $N_1$ and $N_2$
  dimensional rectangles. The statement is made more precise in Part
  \ref{cc1} of Theorem \ref{charthm}. The condition $d(B_i,{\bf 0})>0$
  means $B_i$ is remote from ${\bf 0}$ and ${\bf Z}_{A_i} $ is an
  extreme vector.
\end{remark}

Next, we define projections and methods that can be used to combine
multiple components of random vectors into a single group. It enables
the study of two groups in a simple setting even though the original
data set is high dimensional. Recall the definition of $C^N(A)$ from \eqref{e:face}.

\begin{definition}\label{midpointdef}
Let $A_1,A_2 \subset \{1,2,\ldots,N\}$, $A_1\cap A_2 =\emptyset$ and suppose $\#A_1=N_1$ and $\#A_2=N_2$, where $N_1,N_2\geq 1$ and $N_1+N_2=N$. Define vectors ${ \bf a}_1,{ \bf a}_2\in C_+^N$  by formulas
\[{ \bf a}_1^{(i)}= \begin{cases} 
      1/N_1, & i\in A_1 \\
      0, & i\notin A_1
   \end{cases}
\]
and
\[{ \bf a}_2^{(i)}= \begin{cases} 
      1/N_2, & i\in A_2 \\
      0, & i\notin A_2.
   \end{cases}
\]
Vectors  ${ \bf a}_1$ and ${ \bf a}_2$ are called the \emph{midpoints of faces} $C^N(A_1)$ and $C^N(A_2)$, respectively.
\end{definition}

Midpoints ${ \bf a}_1$ and ${ \bf a}_2$  are linearly independent
{column} vectors in $\mathbb{R}^N$ and the subspace $W_{{ \bf
    a}_1,{ \bf a}_2}:=\textnormal{span}({ \bf a}_1,{ \bf a}_2)$
spanned by the midpoints is a plane. Thus we define orthogonal
projections onto the subspace $W_{{ \bf a}_1,{ \bf a}_2}$ via the
 projection matrix $\pmb{Q}_{{ \bf a}_1,{ \bf
     a}_2}:=\pmb{M}(\pmb{M}^T\pmb{M})^{-1}\pmb{M}^T,$ where $\pmb{M}$
 is the $N \times 2$ matrix $\pmb{M}=[{ \bf a}_1,{ \bf a}_2]$. When
 the subspace is spanned by midpoints {of faces}, the projection
 matrix $\pmb{Q}_{{ \bf a}_1,{ \bf a}_2}$  has a simple form. By a
 direct calculation,
\begin{equation}\label{qmatrix}
\pmb{Q}_{{ \bf a}_1,{ \bf a}_2}=[{ \bf c}_1,{ \bf c}_2,\ldots,{ \bf c}_N],
\end{equation}
where 
\[{ \bf c}_i= \begin{cases} 
      { \bf a}_1, & { \bf a}_1^{(i)}\neq 0 \\
      { \bf a}_2, & { \bf a}_2^{(i)}\neq 0.
   \end{cases}
\]

\begin{example} Suppose $N=5$, $A_1=\{1,2,4\}$ and $A_2=\{3,5\}$. Now ${ \bf a}_1=[1/3,1/3,0,1/3,0]^T$, ${ \bf a}_2=[0,0,1/2,0,1/2]^T$ and 
$$\pmb{Q}_{{ \bf a}_1,{ \bf a}_2}=
\begin{bmatrix}
    \frac{1}{3} & 0 & \frac{1}{3} & \frac{1}{3} & 0 \\
    0 & \frac{1}{2} & 0 & 0 & \frac{1}{2} \\
    \frac{1}{3} & 0 & \frac{1}{3} & \frac{1}{3} & 0 \\
    \frac{1}{3} & 0 & \frac{1}{3} & \frac{1}{3} & 0 \\
    0 & \frac{1}{2} & 0 & 0 & \frac{1}{2} \\
\end{bmatrix}. $$
\end{example}

An  orthogonally projected point is connected to linear combinations
of midpoints ${ \bf a}_1$ and ${ \bf a}_2$ and such a point
${\bf x}\in\mathbb{R}_+$
has representation 
\begin{equation}\label{oprojdef}
\pmb{Q}_{{ \bf a}_1,{ \bf a}_2}{ \bf x}=\left(\sum_{i\in A_1} x^{(i)}\right){ \bf a}_1+\left(\sum_{i\in A_2} x^{(i)}\right){ \bf a}_2. 
\end{equation}

Next, we will define projections that allow projection of multidimensional data onto a line. The projected points can be used to inspect validity of asymptotic independence.

\begin{definition}\label{rnto01lemma}
Let $A_1$ and $A_2$ be as in Definition \ref{midpointdef} and $\pmb{Q}_{{ \bf a}_1,{ \bf a}_2}$ as in \eqref{qmatrix}. 

Mappings $h_1 \colon \mathbb{R}^N_+\backslash \{{\bf 0}\} \mapsto C^N_+$,
$h_2 \colon \mathbb{R}^N_+ \mapsto \mathbb{R}^N_+$ and $h_3 \colon
\{(1-t){ \bf a}_1+t{ \bf a}_2: t\in [0,1]\}\mapsto [0,1]$  are defined as 
$$h_1({ \bf x}):=\frac{ { \bf x}}{||{ \bf x}||}, \quad
h_2({ \bf x}):=\pmb{Q}_{{ \bf a}_1,{ \bf a}_2}{ \bf x}, \quad
h_3({ \bf x}):=h_4^{-1}({ \bf x}),$$
where $h_4$ is the linear interpolation $h_4(t)=(1-t){ \bf a}_1+t{ \bf
  a}_2$, $t\in [0,1]$. We define projection $\pa\colon
\mathbb{R}_+^N\backslash \{{\bf 0}\}\mapsto [0,1]$
by \begin{equation}\label{padef} 
\pa({\bf x}):=h_3(h_2(h_1({\bf x}))).
\end{equation}
\end{definition}

Function $\pa({\bf x})$ projects points of $\mathbb{R}_+\backslash
\{{\bf 0}\}$ first onto the $L_1$-simplex and then orthogonally onto the
line connecting midpoints ${ \bf a}_1$ and ${ \bf a}_2$. The order of
projections $h_1$ and $h_2$ can be switched.  

\begin{lemma}\label{h1h2switchlemma} Suppose ${\bf
    x}\in\mathbb{R}_+\backslash \{{\bf 0}\}$. Let $A_1$ $A_2$, $h_1$
  and $h_2$ be as in Definition \ref{rnto01lemma}.  
Then 
\begin{equation}\label{h1h2switcheq}
h_2(h_1({\bf x}))=h_1(h_2({\bf x})).
\end{equation}
\end{lemma}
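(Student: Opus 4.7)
The plan is to reduce the claimed identity to the single scalar identity $\|h_2(\bx)\| = \|\bx\|$ on $\mathbb{R}^N_+$, using the facts that $h_2$ is linear and $h_1$ is $L_1$-normalization. Concretely, since $h_2$ is a matrix multiplication it commutes with scalar division, so
\begin{equation*}
h_2(h_1(\bx)) = h_2\!\left(\frac{\bx}{\|\bx\|}\right) = \frac{h_2(\bx)}{\|\bx\|},
\end{equation*}
while on the other hand
\begin{equation*}
h_1(h_2(\bx)) = \frac{h_2(\bx)}{\|h_2(\bx)\|}.
\end{equation*}
Assuming for the moment $h_2(\bx)\neq \mathbf{0}$, the equality \eqref{h1h2switcheq} is therefore equivalent to the norm-preservation statement $\|h_2(\bx)\|=\|\bx\|$ for $\bx\in\mathbb{R}_+^N\setminus\{\mathbf{0}\}$.

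To verify this norm identity I would use the explicit representation \eqref{oprojdef}: the image $\pmb{Q}_{\ba_1,\ba_2}\bx$ is the non-negative combination $(\sum_{i\in A_1}x^{(i)})\ba_1+(\sum_{i\in A_2}x^{(i)})\ba_2$. Three facts then combine cleanly: (i) the vectors $\ba_1$ and $\ba_2$ have disjoint supports $A_1$ and $A_2$ since $A_1\cap A_2=\emptyset$; (ii) $\|\ba_1\|=\|\ba_2\|=1$ by the definition of a midpoint; and (iii) $A_1\cup A_2=\{1,\dots,N\}$ because $N_1+N_2=N$. Using disjointness and the coefficients being non-negative, the $L_1$-norm of the sum splits as
\begin{equation*}
\|\pmb{Q}_{\ba_1,\ba_2}\bx\| = \Bigl(\sum_{i\in A_1}x^{(i)}\Bigr)\|\ba_1\| + \Bigl(\sum_{i\in A_2}x^{(i)}\Bigr)\|\ba_2\| = \sum_{i\in A_1}x^{(i)} + \sum_{i\in A_2}x^{(i)},
\end{equation*}
and by (iii) this last sum is exactly $\|\bx\|$.

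Finally I would handle the edge case $h_2(\bx)=\mathbf{0}$: by the norm identity just established this forces $\|\bx\|=0$, contradicting $\bx\neq\mathbf{0}$, so this case cannot occur on the stated domain and the proof is complete. There is no real obstacle here; the only subtlety worth flagging is that one uses $\bx\geq\mathbf{0}$ in an essential way so that the coefficients in \eqref{oprojdef} are non-negative and the $L_1$-norm distributes additively over the disjointly supported summands. Without non-negativity of $\bx$ the step would require absolute values and the identity would fail in general.
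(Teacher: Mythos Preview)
Your proof is correct and follows essentially the same route as the paper: both reduce the identity to the norm-preservation statement $\|\pmb{Q}_{{\bf a}_1,{\bf a}_2}\bx\|=\|\bx\|$ via linearity of $h_2$, and then verify that identity from the explicit representation \eqref{oprojdef}. Your treatment is slightly more explicit about the role of non-negativity and the well-definedness of $h_1(h_2(\bx))$, but the argument is the same.
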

\begin{proof} We note first that $\pmb{Q}_{{ \bf a}_1,{ \bf a}_2}{\bf
    x}\in \mathbb{R}_+\backslash \{{\bf 0}\}$ so that the function
  $h_1(h_2({\bf x}))$ is well defined. Also \eqref{oprojdef} and 
  $\pmb{Q}_{{ \bf a}_1,{ \bf a}_2}=\pmb{Q}_{{ \bf a}_1,{ \bf a}_2}^T$
imply
\begin{equation}\label{qnormeq}
||\pmb{Q}_{{ \bf a}_1,{ \bf a}_2}{\bf x}||=\sum_{i=1}^{N_1} { \bf a}_1\cdot { \bf x}+\sum_{i=1}^{N_2} { \bf a}_2\cdot { \bf x}=\sum_{i=1}^{N} x^{(i)}=||{\bf x}||.
\end{equation}
Now, using linearity of $h_2$ and Equation \eqref{qnormeq} we get 
$$h_2(h_1({\bf x}))=\pmb{Q}_{{ \bf a}_1,{ \bf a}_2}\frac{ { \bf x}}{||{ \bf x}||} =\frac{ \pmb{Q}_{{ \bf a}_1,{ \bf a}_2}{ \bf x}}{||{ \bf x}||} = \frac{ \pmb{Q}_{{ \bf a}_1,{ \bf a}_2}{ \bf x}}{||\pmb{Q}_{{ \bf a}_1,{ \bf a}_2}{\bf x}||}= h_1(h_2({\bf x})). $$
\end{proof}

Lemma \ref{h1h2switchlemma} states that the mapping $\pa$ of
Definition \ref{rnto01lemma} can be viewed in two different ways. This
{observation is} relevant for the proof of {Theorem \ref{charthm}
  below}. 

\subsection{Connection between asymptotic independence and the limit measure}

\begin{theorem}\label{charthm}
Suppose ${\bf Z}\geq {\bf 0}$ is a multivariate regularly varying random vector. Let ${\bf Z}$, ${\bf Z}_{A_1}$ and ${\bf Z}_{A_2}$ be as in Definition \ref{asinddef} and $A_1\cap A_2 =\emptyset$.  
The following are equivalent with \eqref{asinddefeq}:
\begin{enumerate}[$1)$]
\item\label{cc1} Suppose $B_1\subset \mathbb{R}^{N}$ and $B_2\subset
  \mathbb{R}^{N}$ are Borel sets bounded away from ${\bf 0}$ with
    the structure
$$B_1=B_1^{(1)}\times B_1^{(2)}\times \cdots \times B_1^{(N)},
\textnormal{ where } B_1^{(i)}=\mathbb{R} \textnormal{ for all }i\in
A_2 $$ 
and
$$B_2=B_2^{(1)}\times B_2^{(2)}\times \cdots \times B_2^{(N)}, \textnormal{ where } B_2^{(i)}=\mathbb{R} \textnormal{ for all }i\in A_1. $$

Then
\begin{equation*}
t \prob \left(\frac{{\bf Z}}{b(t)}\in B_1\cap B_2  \right)\to 0, \quad t\to \infty.
\end{equation*}
\item\label{cc2} Suppose $i\in A_1$, $j\in A_2$ and $c>0$.

Then
\begin{equation}\label{cc2eq2}
t \prob \left(\frac{Z^{(i)}}{b(t)}>c,\frac{Z^{(j)}}{b(t)}>c  \right)\to 0, \quad t\to \infty.
\end{equation}
\item\label{cc3}  The angular measure $\s$  concentrates
  on faces corresponding to $A_1$ and $A_2$,   
\begin{equation}\label{cc3eq3}
\s(C^N(A_1))+\s(C^N(A_2))=1.
\end{equation}
\end{enumerate}
\end{theorem}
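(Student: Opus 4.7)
The plan is to close the cycle of implications \eqref{asinddefeq}$\Rightarrow$(1)$\Rightarrow$(2)$\Rightarrow$(3)$\Rightarrow$\eqref{asinddefeq}, using the polar decomposition \eqref{prodmrvdef} as the main bridge between tail probability statements and integrals against $\s$. Throughout I assume $A_1\cup A_2=\{1,\dots,N\}$, since otherwise a coordinate outside $A_1\cup A_2$ could carry angular mass without spoiling \eqref{asinddefeq}, leaving condition (3) strictly stronger than the other three.

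The first two implications are essentially restatements. For \eqref{asinddefeq}$\Rightarrow$(1), the sets $B_1,B_2\subset\mathbb{R}^N$ in (1) are cylinders in the $A_1$- and $A_2$-coordinates, so $\{{\bf Z}/b(t)\in B_1\cap B_2\}$ coincides with $\{{\bf Z}_{A_1}/b(t)\in B_1',\,{\bf Z}_{A_2}/b(t)\in B_2'\}$ for Borel sets $B_i'$ bounded away from ${\bf 0}$; \eqref{asinddefeq} applies directly. For (1)$\Rightarrow$(2), specialize the cylinders to half-spaces $B_1=\{x:x^{(i)}>c\}$ and $B_2=\{x:x^{(j)}>c\}$ with $i\in A_1$, $j\in A_2$.

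The heart of the argument is (2)$\Rightarrow$(3). With $(R,\Theta)=(||{\bf Z}||,{\bf Z}/||{\bf Z}||)$, rewrite the event in \eqref{cc2eq2} as $\{R/b(t) > c/(\Theta^{(i)}\wedge\Theta^{(j)})\}$ (convention $c/0:=\infty$) and apply \eqref{prodmrvdef}. For all but countably many $c$ the boundary of this set is $\nu_\alpha\times\s$-null, and the limit equals
\[
C\,c^{-\alpha}\int_{C^N}\bigl(\theta^{(i)}\wedge\theta^{(j)}\bigr)^\alpha\,\s(d\theta),
\]
where $C$ is the constant in \eqref{prodmrvdef}. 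Hypothesis (2) forces this to vanish for every $c>0$ and every pair $(i,j)\in A_1\times A_2$, so $\theta^{(i)}\wedge\theta^{(j)}=0$ holds $\s$-a.s.; since $A_1\times A_2$ is finite, a single $\s$-null set works uniformly in $(i,j)$. Combined with $||\theta||=1$ and $A_1\cup A_2=\{1,\dots,N\}$, this forces $\theta\in C^N(A_1)\cup C^N(A_2)$, giving (3).

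For (3)$\Rightarrow$\eqref{asinddefeq}, reverse the polar computation. Choose $\delta>0$ with $d(B_i,{\bf 0})>\delta$ for $i=1,2$. Then $\{{\bf Z}_{A_i}/b(t)\in B_i\}\subset\{R\,||\Theta_{A_i}||>\delta b(t)\}$, so the probability in \eqref{asinddefeq} is bounded above by $t\prob(R/b(t)>\delta/(||\Theta_{A_1}||\wedge||\Theta_{A_2}||))$, which by \eqref{prodmrvdef} tends to
\[
C\,\delta^{-\alpha}\int_{C^N}\bigl(||\theta_{A_1}||\wedge||\theta_{A_2}||\bigr)^\alpha\,\s(d\theta).
\]
Under (3) the integrand vanishes $\s$-a.s., so the limit is $0$. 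I expect the main obstacle to be in step (2)$\Rightarrow$(3): justifying that vague convergence in \eqref{prodmrvdef} may be applied to the unbounded region defining the event (checking that $\nu_\alpha\times\s$ places no mass on its boundary for generic $c$) and then upgrading the finitely many pairwise conclusions ``$\theta^{(i)}\wedge\theta^{(j)}=0$ $\s$-a.s.'' to a single structural statement about the support of $\s$ on the union of faces. The remaining implications are then bookkeeping built on the same polar representation.
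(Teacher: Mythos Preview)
Your cycle proof is correct and takes a genuinely different route from the paper's. The paper argues by pairwise equivalences, handling the two substantive links by contrapositive and working entirely with the Cartesian limit measure $\nu$: for $\neg(2)\Rightarrow\neg(3)$ it finds a set of positive $\nu$-mass whose radial projection to $C^N_+$ misses both faces, and for $\neg(3)\Rightarrow\neg\eqref{asinddefeq}$ it starts from a subset of $C^N_+$ off both faces, thickens it into a cone of positive $\nu$-mass, and exhibits Borel sets $B_1,B_2$ witnessing failure of \eqref{asinddefeq}. No polar integral is ever written down.

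Your approach instead pushes everything through the polar representation \eqref{prodmrvdef}: for (2)$\Rightarrow$(3) you identify the limit in \eqref{cc2eq2} as a constant times $\int(\theta^{(i)}\wedge\theta^{(j)})^\alpha\,\s(d\theta)$ and read off the support conclusion from vanishing of the integrand; for (3)$\Rightarrow$\eqref{asinddefeq} you bound by the analogous integral with $\|\theta_{A_1}\|\wedge\|\theta_{A_2}\|$. This is more systematic and makes the role of $\s$ transparent. The boundary check you flag as the main obstacle is in fact harmless for \emph{every} $c>0$: the boundary $\{(r,\theta):r=c/(\theta^{(i)}\wedge\theta^{(j)})\}$ is a graph over $\theta$, hence $\nu_\alpha$-null fiberwise, and the piece at $r=\infty$ carries no $\nu_\alpha$-mass either. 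Your explicit remark that $A_1\cup A_2=\{1,\dots,N\}$ is needed for (3) to match the other conditions is well taken; the paper uses this implicitly in its proof of \eqref{asinddefeq}$\Rightarrow$(3) (and in the surrounding material) without stating it in the theorem.
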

\begin{proof} \eqref{asinddefeq} $\Leftrightarrow$ \ref{cc1}: Suppose sets $B_1 \subset \mathbb{R}_+^{N_1}$ and $B_2 \subset \mathbb{R}_+^{N_2}$ are bounded away from ${\bf 0}$. Define sets $D_{k,c}\subset\mathbb{R}_+^{N}$, where $k=1,2,\ldots,N$ and $c>0$ by 
\begin{equation}\label{cc1proofeq1}
D_{k,c}=D_{k,c}^{(1)}\times D_{k,c}^{(2)}\times \ldots \times D_{k,c}^{(N)},
\end{equation}
where 
\[D_{k,c}^{(i)}:= \begin{cases} 
      [c,\infty), & i=k \\
      \mathbb{R}_+, & i\neq k.
   \end{cases}
\]
Since the sets $B_1$ and $B_2$ are bounded away from ${\bf 0}$, there must be numbers $c_1>0$ and $c_2>0$ so that 
\begin{eqnarray}
t \prob\left(\frac{{\bf Z}_{A_1}}{b(t)}\in B_1,\frac{{\bf Z}_{A_2}}{b(t)}\in B_2  \right)&\leq& t \prob \left(\frac{{\bf Z}}{b(t)}\in \left(\cup_{k\in A_1} D_{k,c_1}\right)\cap \left(\cup_{k\in A_2} D_{k,c_2}\right) \right) \nonumber \\ 
&\leq& \sum_{k_1=1}^{N_1} \sum_{k_2=1}^{N_2} t \prob \left(\frac{{\bf Z}}{b(t)}\in D_{k_1,c_1}\cap D_{k_2,c_2}  \right).\label{cc1proofeq2}
\end{eqnarray} 
Each term on the right hand side of \eqref{cc1proofeq2} converges to $0$, as $t\to \infty$ by Condition \ref{cc1}. This shows  \ref{cc1} $\Rightarrow$ \eqref{asinddefeq}. The remaining direction is clear because product sets are special cases of sets in \eqref{asinddefeq}.

\ref{cc1} $\Leftrightarrow$ \ref{cc2}: Suppose \ref{cc2} holds and let $B_1$ and $B_2$ be as in Condition \ref{cc1}. Since $B_1$ and $B_2$ are bounded away from ${\bf 0}$ there must be indices $k_1\in A_1$, $k_2\in A_2$ and a number $c>0$ such that $B_1\subset D_{k_1,c}$ and $B_2\subset D_{k_2,c}$, where the sets $D_{k_1,c}$ and $D_{k_2,c}$ are defined as in \eqref{cc1proofeq1}. Then 
$$t \prob\left(\frac{{\bf Z}}{b(t)}\in B_1\cap B_2  \right)\leq t \prob\left(\frac{{\bf Z}}{b(t)}\in D_{k_1,c}\cap D_{k_2,c}  \right), $$
where the right hand side converges to $0$, as $t\to \infty$ by Condition \ref{cc2}. The other direction is clear because the sets in \ref{cc2} are special cases of sets in \ref{cc1}.

\ref{cc3} $\Rightarrow$ \ref{cc2}: Suppose first that Condition \ref{cc2} does not hold. Then there exist indices $k_1\in A_1$, $k_2\in A_2$ and $c>0$ such that \eqref{cc2eq2} does not hold, i.e. the limit does not exist or the limit exists but is not $0$. Even if the set in \eqref{cc2eq2} is a not a continuity set of the limit measure $\nu$, we may choose a smaller number $c'\in(0,c)$ so that the right hand side of  
$$ \{Z^{(k_1)}>c b(t),Z^{(k_2)}>c b(t) \}\subset  \{Z^{(k_1)}>c' b(t),Z^{(k_2)}>c' b(t) \}$$
is a continuity set. So, when $c$ is replaced by $c'$ in \eqref{cc2eq2} the limit given by limit measure $\nu$ exists, as $t \to \infty$. Since the limit is not $0$ by assumption, it must be positive. So, $\nu(D_{k_1,c}\cap D_{k_2,c} )>0$, where the sets $D_{k_1,c}$ and $D_{k_2,c}$ are as in \eqref{cc1proofeq1}. Because the set $D_{k_1,c}\cap D_{k_2,c}$ gets positive value under measure $\nu$, the image under $h_1$ of this set must have positive angular measure, where $h_1$ is as in Definition \ref{rnto01lemma}. Specifically, 
\begin{equation}\label{cc2proofeq1}
\s(h_1(D_{k_1,c}\cap D_{k_2,c}))>0.
\end{equation} 
If $x\in h_1(D_{k_1,c}\cap D_{k_2,c})$, then $\sum_{i\in A_1} x^{(i)}>0$ and $\sum_{i\in A_2}x^{(i)}>0$. So, 
\begin{equation}\label{cc2proofeq2}
h_1(D_{k_1,c}\cap D_{k_2,c})\cap C^N(A_1)=\emptyset
\end{equation}
and
\begin{equation}\label{cc2proofeq3}
h_1(D_{k_1,c}\cap D_{k_2,c})\cap C^N(A_2)=\emptyset.
\end{equation}
Since $\s$ is a probability measure and some of the probability mass is concentrated outside of the faces $C^N(A_1)$ and $C^N(A_2)$ by \eqref{cc2proofeq1}, \eqref{cc2proofeq2} and\eqref{cc2proofeq3}, we have that
$$\s(C^N(A_1))+\s(C^N(A_2))<1. $$
So, Condition \ref{cc3} does not hold.

\eqref{asinddefeq} $\Rightarrow$ \ref{cc3}: Suppose Condition \ref{cc3} does not hold. Then there exist a set $B\subset C_+^N$ such that $\s(B)>0$,
$$B\cap C^N(A_1)=\emptyset $$
and
$$B\cap C^N(A_2)=\emptyset. $$
Since $B$ does not intersect either face, there are numbers $c_1,c_2\in(0,1)$ so that the set 
\begin{equation*}
B_{c_1,c_2}:=\left\{{\bf x}\in B: \sum_{i\in A_1} x^{(i)}>c_1, \sum_{i\in A_2}x^{(i)}>c_2\right\}
\end{equation*}
has positive angular measure, that is $\s(B_{c_1,c_2})>0$. Define $D\subset \mathbb{R}^N$ using $B_{c_1,c_2}$ by $D:=\{c{\bf x}:  c\geq 1, {\bf x}\in B_{c_1,c_2}\}$. Now $\nu(D)>0$. Furthermore, 
\begin{equation}\label{cc3proofeq1}
D\subset \left\{{\bf x}\in \mathbb{R}^N: \sum_{i\in A_1} x^{(i)}>c_1, \sum_{i\in A_2}x^{(i)}>c_2\right\}.
\end{equation}
So, when $B_1=\left\{{\bf x}\in \mathbb{R}^{N_1}: \sum_{i} x^{(i)}>c_1 \right\}$ and $B_2=\left\{{\bf x}\in \mathbb{R}^{N_2}: \sum_{i} x^{(i)}>c_2 \right\}$ in \eqref{asinddefeq} we have that
$$t \prob \left(\frac{{\bf Z}_{A_1}}{b(t)}\in B_1,\frac{{\bf Z}_{A_2}}{b(t)}\in B_2  \right)\geq t \prob \left(\frac{{\bf Z}}{b(t)}\in D\right),$$
where the right hand side does not converge to $0$, but to $\nu(D)>0$. This shows that \eqref{asinddefeq} does not hold.
\end{proof}

\begin{remark} Part \ref{cc1} of Theorem \ref{charthm} admits sets
  that have zeros as some of their components. For example, if $N=3$,
  $A_1=\{1,3\}$ and $A_2=\{2\}$, then $B_1$ can be $\{0\}\times
  \mathbb{R} \times [1,\infty)$. For this reason the condition $t \prob
  \left({\bf Z}/b(t)\in [c,\infty)^N\right)\to 0,$ as $t\to \infty$
  for all $c>0$ is {necessary, but not sufficient for} asymptotic independence.
\end{remark}

The following result helps reduce multidimensional dependence
structures to the two dimensional setting by considering sums of components. 

\begin{proposition}\label{prop:reduce}
Suppose ${\bf Z}=(Z^{(1)},Z^{(2)},\ldots,Z^{(N)})$ is a non-negative
MRV random vector and $N\geq 2$. Let $A_1,A_2 \subset
\{1,2,\ldots,N\}$, $A_1\cap A_2 =\emptyset$ and suppose $\#A_1=N_1$
and $\#A_2=N_2$, where $N_1,N_2\geq 1$ and $N_1+N_2=N$. 

Then the non-negative two dimensional random vector 
$$(Y_1,Y_2):=\left(\sum_{i\in A_1} Z^{(i)},\sum_{i\in A_2} Z^{(i)} \right) $$
is also MRV. Furthermore, ${\bf Z}_{A_1}$ and  ${\bf Z}_{A_2}$ are
asymptotically independent if and only if 
 $Y_1$ and $Y_2$ are
asymptotically independent. 
\end{proposition}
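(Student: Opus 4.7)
My plan is to handle the two claims separately, using the characterization of asymptotic independence in Theorem \ref{charthm} (in particular Condition \ref{cc2}) as the main tool, and a continuous mapping argument for the MRV preservation.

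\textbf{MRV of $(Y_1,Y_2)$.} Define the linear map $\phi\colon\mathbb{R}^N_+\to\mathbb{R}^2_+$ by $\phi({\bf z})=\bigl(\sum_{i\in A_1}z^{(i)},\sum_{i\in A_2}z^{(i)}\bigr)$. It is continuous and positively homogeneous of degree one, and since $N_1+N_2=N$ we have $\phi^{(1)}({\bf z})+\phi^{(2)}({\bf z})=\sum_{i=1}^N z^{(i)}=\|{\bf z}\|$, so that preimages of sets bounded away from the origin in $\mathbb{R}^2_+$ are bounded away from the origin in $\mathbb{R}^N_+$. This makes $\phi$ proper on the relevant punctured spaces, so the continuous mapping theorem for vague convergence (cf.\ \cite[Ch.~6]{MR2271424}) applied to \eqref{mrvdefeq} yields
\begin{equation*}
t\prob\!\left(\frac{(Y_1,Y_2)}{b(t)}\in\cdot\right)\stackrel{v}{\to}\nu\circ\phi^{-1}(\cdot)
\end{equation*}
in $M_+([0,\infty]^2\setminus\{{\bf 0}\})$, with the same normalizing function $b(t)$. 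The limit measure is nontrivial since $\nu$ charges the quadrant and $\phi$ does not collapse it (each coordinate of $\phi$ is nonzero somewhere in $\suu(\nu)$). Hence $(Y_1,Y_2)$ is MRV.

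\textbf{Equivalence of asymptotic independence.} By Theorem \ref{charthm} Condition \ref{cc2}, ${\bf Z}_{A_1}$ and ${\bf Z}_{A_2}$ are asymptotically independent iff for every $i\in A_1$, $j\in A_2$ and $c>0$,
\begin{equation*}
t\prob\bigl(Z^{(i)}>c\,b(t),\,Z^{(j)}>c\,b(t)\bigr)\to 0,\qquad t\to\infty,
\end{equation*}
and similarly (in dimension two, where it is just the statement $\nu_{(Y_1,Y_2)}((c,\infty]^2)=0$), $Y_1$ and $Y_2$ are asymptotically independent iff $t\prob(Y_1>cb(t),\,Y_2>cb(t))\to 0$ for all $c>0$.

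For the ``($\Leftarrow$)'' direction, nonnegativity gives $Z^{(i)}\le Y_1$ for every $i\in A_1$ and $Z^{(j)}\le Y_2$ for every $j\in A_2$, so
\begin{equation*}
\{Z^{(i)}>c\,b(t),\,Z^{(j)}>c\,b(t)\}\subset\{Y_1>c\,b(t),\,Y_2>c\,b(t)\},
\end{equation*}
and the containment plus the two-dimensional condition finishes this direction.

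For the ``($\Rightarrow$)'' direction, if $Y_1>cb(t)$ then at least one $Z^{(i)}>(c/N_1)b(t)$ for some $i\in A_1$, and analogously for $Y_2$. Setting $c_0=c/\max(N_1,N_2)$ and using a union bound,
\begin{equation*}
t\prob(Y_1>cb(t),\,Y_2>cb(t))\le\sum_{i\in A_1}\sum_{j\in A_2}t\prob\bigl(Z^{(i)}>c_0 b(t),\,Z^{(j)}>c_0 b(t)\bigr),
\end{equation*}
and each term on the right tends to zero by the assumed asymptotic independence of ${\bf Z}_{A_1}$ and ${\bf Z}_{A_2}$ via Theorem \ref{charthm} Condition \ref{cc2}. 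Since $c>0$ is arbitrary, this establishes the two-dimensional asymptotic independence.

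\textbf{Main obstacle.} Nothing is deep here; the only small care needed is to verify that the linear projection $\phi$ is proper on the punctured spaces so that the continuous mapping theorem applies cleanly, and to juggle the different thresholds ($c$ versus $c/N_i$) in the forward direction. Both are resolved by the identity $Y_1+Y_2=\|{\bf Z}\|$ (using $N_1+N_2=N$) and by passing to the common smaller threshold $c_0$.
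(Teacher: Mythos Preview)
Your proof is correct. Both parts are handled cleanly: the MRV preservation via the continuous mapping theorem (using the key identity $Y_1+Y_2=\|{\bf Z}\|$ to guarantee properness) matches in spirit what the paper does by citing \cite[Proposition~5.5]{MR2271424}, and your equivalence argument is sound.

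Where you genuinely differ is in the equivalence step. The paper invokes Condition~\ref{cc3} of Theorem~\ref{charthm} (the angular-measure characterization): it observes via \eqref{oprojdef} that the orthogonal projection $\pmb{Q}_{{\bf a}_1,{\bf a}_2}$ sends ${\bf x}\in C_+^N$ to ${\bf a}_j$ exactly when ${\bf x}\in C^N(A_j)$, so $\s(C^N(A_1))+\s(C^N(A_2))=1$ iff the angular measure $\s_Y$ of $(Y_1,Y_2)$ concentrates on $\{(1,0)^T,(0,1)^T\}$. You instead use Condition~\ref{cc2} (the pairwise tail condition) together with the elementary containments $Z^{(i)}\le Y_1$, $Z^{(j)}\le Y_2$ and a pigeonhole/union-bound argument with the rescaled threshold $c_0=c/\max(N_1,N_2)$. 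Your route is more self-contained and avoids the projection machinery of Section~\ref{asindsection}; the paper's route is shorter once that machinery is in place and makes the link to the angular measure explicit, which is thematically relevant to the support-estimation goals of the paper. Either argument is perfectly acceptable.
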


\begin{proof} The regular variation of $(Y_1,Y_2)$ follows from
\cite[Proposition 5.5, p.~142]{MR2271424}.  For the second claim,
observe first using \eqref{oprojdef} that if ${\bf x}\in C^N_+$, then
for $j=1,2$, $\pmb{Q}_{{ \bf a}_1,{ \bf a}_2}({\bf x})={ \bf a}_j$ if
and only if ${\bf x}\in C(A_j)$. So, it follows that
$\s(C(A_1))+\s(C(A_1))=1$ if and only if $\s_Y({(0,1)^T})+\s_Y({(1,0)^T})=1$,
where $\s_Y$ denotes the angular measure of $(Y_1,Y_2)$. Using Part
\ref{cc3} of Theorem \ref{charthm} completes the proof. 
\end{proof}

\subsection{Asymptotic normality of the validation
  statistic}\label{asnormalsection} 
In light of Theorem \ref{charthm} and Proposition
\ref{prop:reduce},  in this section we concentrate on the case $N=2$.}

We start with an auxiliary function $g(\cdot)$ in Definition
\ref{gdef} used to create a test statistic in  Theorem
\ref{astheorem}. 
The function $g$  fixes a
subset on 
$C^2$ and is the basis of a test for whether
 the asymptotic support of $\s$ is
included in the fixed set. Different choices for $g$ yield tests for
different dependence structures which could  include asymptotic
independence introduced in Section \ref{asindsection}.
Commonly encountered
$g$'s are illustrated in Figure \ref{fig:gexample}.  

\begin{definition}\label{gdef} Suppose $[a_1,b_1],[a_2,b_2],\ldots,[a_m,b_m]$ are separate subintervals of $[0,1]$, where $m\geq 2$.
Let $g\colon [0,1] \mapsto \mathbb{R}$ be a function defined by conditions
\[g(0)= \begin{cases} 
      0, & a_1>0 \\
      \frac{1}{2}, & a_1=0,
   \end{cases}
\]
$$g(a_i)=g(b_i)=\frac{1}{2}+\frac{i-1}{2(m-1)}, i=1,2,\ldots,m,$$
$$g((b_i+a_{i+1})/2)=g(b_i)-\frac{1}{2},i=1,2,\ldots,m-1$$
and 
\[
  g(1)= \begin{cases} 
      \frac{1}{2}, & b_m<1 \\
      1, & b_m=1
   \end{cases}
 \]
 and whose values are given by linear interpolation between the
 defined points on the rest of the interval $[0,1]$. 
\end{definition}

\begin{figure}
  \begin{subfigure}[b]{0.32\textwidth}
    \includegraphics[width=\textwidth]{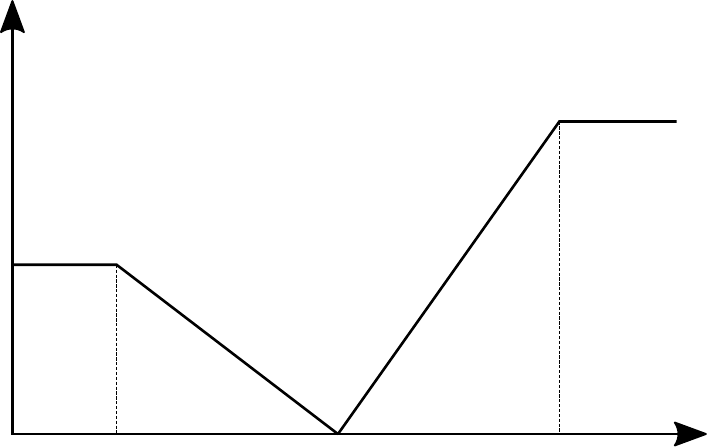}
    \caption{}
    \label{fig:1}
  \end{subfigure}
  \begin{subfigure}[b]{0.32\textwidth}
    \includegraphics[width=\textwidth]{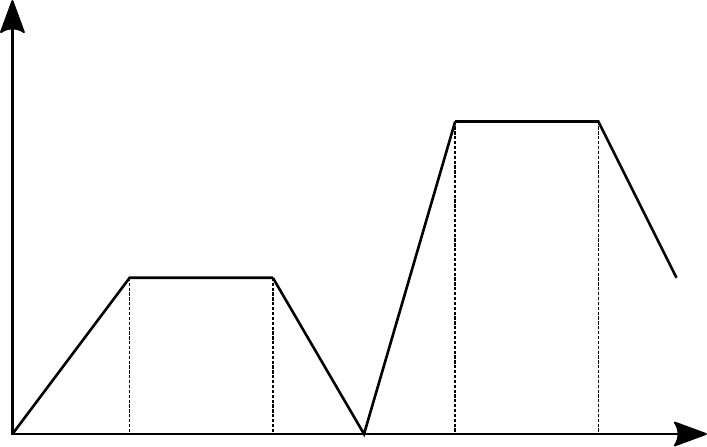}
    \caption{}
    \label{fig:2}
  \end{subfigure}
  \begin{subfigure}[b]{0.32\textwidth}
    \includegraphics[width=\textwidth]{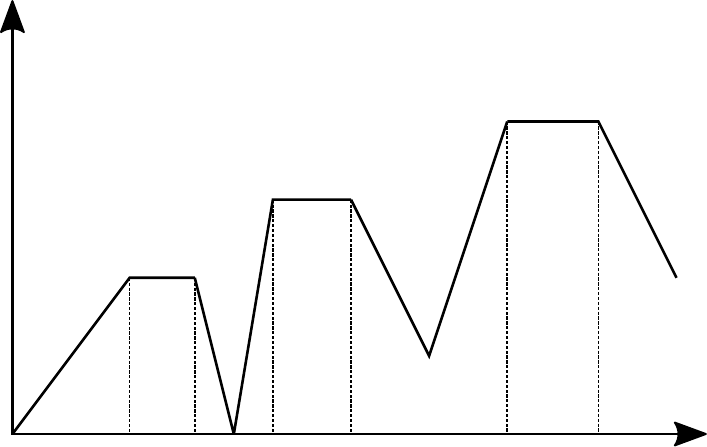}
    \caption{}
    \label{fig:3}
  \end{subfigure}
  \caption{Graphs of function $g$ for different test scenarios. On the
    left, $g$ allows buffers when testing for asymptotic independence.Values at end points differ in order to
    avoid zero variance in \eqref{thm3eq1} under asymptotic
    independence. The middle $g$ could be used to test if the
    asymptotic support  {consists of} two intervals and 
    similar $g$ could arise when testing if the support is covered by
    a single interval after the sample is processed using the method
    described in Remark \ref{asremark3}. The  $g$ {on the right}
could test if data was consistent with
    the support consisting of three  intervals and such a 
    dependence structure might arise in the search for hidden
    regular variation after the first order cone is removed from
    data.} 
  \label{fig:gexample}
\end{figure}

The function $g$ is designed so that
the user can add small buffers containing the
support.  The feature is added because in  our experience, 
it is difficult to detect asymptotic independence in real data
and it is easier if one tests for the support being in a
  bigger set.
 Such support structures still convey useful
information because they imply that some of the components
can not yield large values at the same time which is precisely the
needed information in many applications. Similar approaches for
finding sufficiently independent groups of variables exist in the
literature, for example in \cite{MR3698112}.  

The most frequently searched extremal dependence structures
correspond to asymptotic independence and strong asymptotic
dependence \cite{MR3737388}. Tests for these are presented in Remarks
\ref{asremark2} 
and \ref{asremark3} below. We first prove  a more general result from
which the others follow. The results are formulated for positive
vectors for notational simplicity.  

\begin{theorem}[Asymptotic normality of test
  statistic]\label{astheorem} Let ${\bf Z}_1,{\bf Z}_2,\ldots$ be
iid MRV random vectors in $\mathbb{R}_+^2$. Suppose
  $(R_i,\theta_i)\in \mathbb{R}_+\times C^2_+$ is the polar coordinate
  representation of ${\bf Z}_i$, where $R_i=||{\bf Z}_i||$ and
  $\theta_i={\bf Z}_i/||{\bf Z}_i||$. Let
  $\theta_{(i:n)}=(\theta_{(i:n)}^1,\theta_{(i:n)}^2)=(\theta_{(i:n)}^1,1-\theta_{(i:n)}^1)$
  be the angular component of the $i$th largest vector in $L_1$ norm
  out of a sample whose size is $n$. Assume $m\geq 2$, $g$ is as
  in Definition \ref{gdef} and  $\s_1$ is the probability
  measure induced on 
  $[0,1]$ from the angular measure
  $\s$ via the mapping $(x,y)\mapsto x$. Finally, assume $\s_1(\cup_i
  [a_i,b_i])=1$.  

Denote 
$$\mu_g:=\int_0^1 \, g(x) \, \s_1({\rm d}x)=\sum_{i=1}^m \left(\frac{1}{2}+\frac{i-1}{2(m-1)}\right)\s_1([a_i,b_i]) $$ 
and
$$\sigma^2_g:=\int_0^1 \, (g(x)-\mu_g)^2 \, \s_1({\rm d}x)=\sum_{i=1}^m \left(\frac{1}{2}+\frac{i-1}{2(m-1)}-\mu_g \right)^2\s_1([a_i,b_i]). $$ 
If  
\begin{equation}\label{asthmaseq1}
\sqrt{k(n)}\left( \ex \left(g\left(\theta_{(k(n):n)}^1\right)\right)-\mu_g \right)\to 0
\end{equation}
and $k(n)/n\to 0$, as $n \to \infty$,  then as $n\to\infty$,
\begin{equation}\label{thm3eq1}
\hat{T}:=\frac{\sum_{i=1}^{k(n)}\left(g\left(\theta_{(i:n)}^1\right) - \mu_g\right)}{\left(k(n)\right)^{\frac{1}{2}}\sigma_g}\stackrel{d}{\to}N(0,1).
\end{equation}
\end{theorem}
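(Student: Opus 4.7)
The plan is to condition on the $(k(n)+1)$th largest radius $R_{(k+1:n)}$, reducing the numerator of $\hat T$ to a sum of $k$ conditionally iid bounded random variables; then apply a Lindeberg--L\'evy CLT conditionally, and finally use Slutsky together with the bias hypothesis \eqref{asthmaseq1} to replace the random conditional mean and variance by their limits $\mu_g$ and $\sigma_g^2$.

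First, I invoke the standard property of order statistics: given $R_{(k+1:n)}=r$, the unordered set of top-$k$ vectors is an iid sample from $\prob({\bf Z}\in\cdot\mid R>r)$. Hence $\{g(\theta^1_{(i:n)}):1\le i\le k\}$ are conditionally iid and uniformly bounded by $1$, with common mean and variance
$$\mu_n(r):=\ex[g(\theta^1_1)\mid R_1>r],\qquad \sigma_n^2(r):=\mathrm{Var}\bigl(g(\theta^1_1)\mid R_1>r\bigr).$$
The polar representation \eqref{prodmrvdef} gives $\prob(\theta_1\in\cdot\mid R_1>r)\Rightarrow\s$ as $r\to\infty$, and since $g$ and $g^2$ are bounded and continuous on $[0,1]$, dominated convergence yields $\mu_n(r)\to\mu_g$ and $\sigma_n^2(r)\to\sigma_g^2$. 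Because $k(n)/n\to 0$ the intermediate order statistic satisfies $R_{(k+1:n)}/b(n/k)\xrightarrow{P}1$, hence $\mu_n(R_{(k+1:n)})\xrightarrow{P}\mu_g$ and $\sigma_n^2(R_{(k+1:n)})\xrightarrow{P}\sigma_g^2$.

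Second, since $|g|\le 1$, Lyapunov's condition is trivial for the conditionally iid sum, and the classical CLT applied conditionally on $R_{(k+1:n)}$ yields
$$\hat V_n:=\frac{\sum_{i=1}^{k}\bigl(g(\theta^1_{(i:n)})-\mu_n(R_{(k+1:n)})\bigr)}{\sqrt{k}\,\sigma_n(R_{(k+1:n)})}\xrightarrow{d}N(0,1);$$
averaging the conditional characteristic function over $R_{(k+1:n)}$ promotes this to unconditional weak convergence. Decompose
$$\hat T=\frac{\sigma_n(R_{(k+1:n)})}{\sigma_g}\,\hat V_n+\frac{\sqrt{k}\bigl(\mu_n(R_{(k+1:n)})-\mu_g\bigr)}{\sigma_g}.$$
The prefactor tends to $1$ in probability, so Slutsky handles the first summand. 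It remains to show the bias summand is $o_P(1)$.

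The bias step is the main obstacle. Using exchangeability of the top-$k$ angles given $R_{(k+1:n)}$, the average $k^{-1}\sum_{i=1}^{k}\ex[g(\theta^1_{(i:n)})]$ equals $\ex[\mu_n(R_{(k+1:n)})]$; since $R_{(i:n)}\ge R_{(k:n)}$ for every $i\le k$ and $r\mapsto \mu_n(r)$ has the same first-order MRV behaviour across this range, hypothesis \eqref{asthmaseq1} applied to the extreme term $\ex[g(\theta^1_{(k:n)})]$ forces $\sqrt{k}\bigl(\ex[\mu_n(R_{(k+1:n)})]-\mu_g\bigr)\to 0$. Combining this mean-zero statement with the concentration $R_{(k+1:n)}/b(n/k)\xrightarrow{P}1$ and a uniform second-order control of the conditional law at large thresholds upgrades the mean statement to the required probability convergence. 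The rest is routine CLT machinery enabled by the boundedness of $g$ and the polar decomposition of MRV.
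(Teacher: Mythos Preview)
The paper gives no self-contained proof here; it simply cites Theorem~3 of reference \cite{MR2048253}. Your outline is exactly the standard route that reference follows: condition on $R_{(k+1:n)}$ to get $k$ conditionally iid bounded summands, apply a triangular-array CLT, then use Slutsky with the MRV convergence $\prob(\theta_1\in\cdot\mid R_1>r)\Rightarrow\s$ to replace random centering and scaling by $\mu_g,\sigma_g$. So the approach matches.

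The one place where your write-up is genuinely loose is the bias step, and you should tighten it. You correctly derive
\[
\frac{1}{k}\sum_{i=1}^{k}\ex\bigl[g(\theta^1_{(i:n)})\bigr]=\ex\bigl[\mu_n(R_{(k+1:n)})\bigr],
\]
but the hypothesis \eqref{asthmaseq1} controls only the \emph{single} term $\ex[g(\theta^1_{(k:n)})]$, not this average, and certainly not the \emph{random} quantity $\sqrt{k}\bigl(\mu_n(R_{(k+1:n)})-\mu_g\bigr)$ that actually appears in your decomposition. The individual expectations $\ex[g(\theta^1_{(i:n)})]$ equal $\ex[\tilde\mu(R_{(i:n)})]$ with $\tilde\mu(r)=\ex[g(\theta^1_1)\mid R_1=r]$, which differ across $i$ and are not ordered by the hypothesis alone. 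Your phrase ``uniform second-order control of the conditional law at large thresholds'' is exactly the missing ingredient, but it is an \emph{additional} assumption, not a consequence of \eqref{asthmaseq1} as written. In the cited reference the second-order condition is effectively imposed on the conditional mean at the threshold $b(n/k)$ (equivalently on $\mu_n(\cdot)$ directly), and \eqref{asthmaseq1} should be read as a convenient proxy for that. So the plan is right; just recognise that the passage from \eqref{asthmaseq1} to $\sqrt{k}\bigl(\mu_n(R_{(k+1:n)})-\mu_g\bigr)=o_P(1)$ needs either a slightly stronger statement of the second-order hypothesis or an explicit monotonicity/regularity argument for $r\mapsto\mu_n(r)-\mu_g$, rather than the informal ``same first-order behaviour'' claim.
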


\begin{proof} The proof is similar to
 the proof of \cite[Theorem 3]{MR2048253}.
\end{proof}

\begin{remark}\label{asremark1}
Condition \eqref{asthmaseq1} is a second order condition
\cite{dehaan:ferreira:2006} controlling how close the asymptotic mean
is to the mean summand in the central limit theorem. It is difficult
to check in practice since detailed knowledge of $\s_1$ is not available. In addition, in practice probabilities $\s_1(
  [a_i,b_i])$, $i=1,2,\ldots,m$
may need to be estimated.\end{remark}

\begin{remark}\label{asremark2}
If $m=2$, $a_1=b_1=0$ and $a_2=b_2=1$ in Theorem \ref{astheorem}, then
\eqref{thm3eq1} is a test statistic for asymptotic independence. 
\end{remark} 

\begin{remark}\label{asremark3}
If the limit angular measure $\s$  concentrates on an interval
$[a,b]\subsetneq [0,1]$, Theorem \ref{astheorem} can not be directly
applied because  the limit distribution  must have a non-zero
variance. 
However, the case where the asymptotic support is an
interval can be reduced to the setting of two intervals by first
transforming the sample $(Z_i^{(1)},Z_i^{(2)})_{i=1}^n$. 
Assume the sample size $n$ is even. (If it is not, leave
out the observation with the smallest $L_1$ norm, because it has no 
effect on  subsequent analysis.)
When $i$ is odd, transform the two
dimensional data using mapping $(x,y) \mapsto (x/2,x/2+y)$. If $i$ is
even, use mapping $(x,y) \mapsto (x+y/2,y/2)$ instead. Then permute
the order of observations to obtain iid MRV random vectors. The
limiting measure of the transformed sample replaces the original with
two smaller copies. In addition, $\suu(\s_1)\subset [a,b]$ if and only if the
asymptotic support of the transformed sample is covered by
$[a/2,b/2]\cup[(a+1)/2,(b+1)/2]$.  
\end{remark} 

\subsubsection{Discussion on the choice of $g$ in Definition \ref{gdef}}

The function $g$ in Definition \ref{gdef} helps identify when
sets 
$[a_1,b_1], [a_2,b_2], \ldots,$  $[a_m,b_m],$
 called the {\em test
   intervals}, eventually cover the support $\suu(\s_1)$.
 Selecting a function with best performance in terms of a
pre-set benchmark depends on the rate convergence to the limit
measure and in practice, such information is not 
 available. Our suggestion for $g$ is based on experience.

There are multiple ways to define such functions, but an
asymptotic normality
  result corresponding to Proposition
\ref{astheorem} imposes requirements.
Function $g$ should be a
constant value on all separate intervals that are believed to contain
probability mass of $\s$ and $g$ must not give zero asymptotic
variance.
This rules out functions with
identical values at both endpoints of $[0,1]$.

The remaining question is how  $g$ should behave
between the regions of constant value.  We want  $g$ to
separate desirable distributions from the ones with support
that is not concentrated on the test intervals. A way to do this is to
make the quantity $|\hat{T}|$ in \eqref{thm3eq1} as large as possible
in the presence of unwanted limiting behavior. On the other hand, the
thresholded data may contain pre-asymptotic observations whose projections
are not in $\suu(\s_1)$ even when all limiting probability mass of $\s_1$ is
in
the test intervals. So, observations close to the regions
of constant value should not change the value of $|\hat{T}|$ 
dramatically. Thus, the choice of $g$ in Definition
\ref{gdef} seems reasonable and making $g$ piece-wise linear
is done  for computational simplicity.

\section{Examples with simulated and real data}\label{realdatatestsection}

In this section, we illustrate how the theoretical results concerning
support estimation in Section \ref{supportestimationsection} and
support testing in Section \ref{asindsection} can be used in
practice. We begin with a simulated dataset in Example
\ref{simulatedexample} to show how the grid based support estimator
performs in a controlled environment. Example \ref{stockexample}
studies daily stock returns. The emphasis is on the fact that stocks
in the same field tend to be dependent, but one can find at least
asymptotically independent assets among  the ordinarily listed
securities. In Example \ref{fmiexample}, 
a natural scenario for emergence of asymptotic independence is given using rainfall data\footnote{Special thanks are due to Sebastian Engelke who suggested that asymptotic independence could be found from rainfall data in a personal communication.}. Finally, in Example \ref{goldsilverexample} daily returns of gold and silver are used to show how the support estimates can be used to obtain inequalities for sizes of large fluctuations.

Typically, multivariate datasets require some amount of processing
before they can reasonably be thought to satisfy the
assumptions of multivariate regular variation given in Definition
\ref{mrvdef}. In particular, tail indices of marginal distrtibutions
must be the same for the asymptotic theory to work. To this end, one
needs to estimate tail indices. Estimation of tail index is a
classical topic which is discussed e.g. in
\cite{MR2271424,MR3191984,MR2563829} or more recently in
\cite{KIM201560}.

If  the marginals do not have the
same index, then the data needs to be processed before
proceeding further so that marginals are asymptotically equivalent.
Several methods exist for standardizing datasets to fit the scope of
multivariate regular variation  including power
transformations of marginals or the rank transform; see \cite[Section
~9.2]{MR2271424} and \cite{heffernan:resnick:2005}.  
 
\subsection{Simulated data}\label{simulatedexample}

We begin by applying the support estimator of Section
\ref{supportestimationsection} 
to simulated data. The data set consists of 3-dimensional observations
${\bf Z}_1,{\bf Z}_2,\ldots,{\bf Z}_n$, where $n=150,000$. Observations
are generated by fixing a region $B\subset C^3_+$ and then sampling
uniformly $50,000$ samples from $B$. The samples on the simplex are
then assigned an independent radial component. The radial component is
  drawn independently from the Pareto$(2)$ 
distribution. So, by definition, the angular and radial components of
the observations are independent. Additionally, $100,000$ observations
serving as noise are added to the sample by sampling uniformly from the
simplex $C^3_+$ and assigning, to each of them, an independent exponentially
distributed radial component. Finally, we put the simulated samples
into a random order so that they form an iid sample from a mixture
distribution that is MRV.

\begin{figure}[h]
  \begin{subfigure}[b]{0.24\textwidth}
    \includegraphics[width=\textwidth]{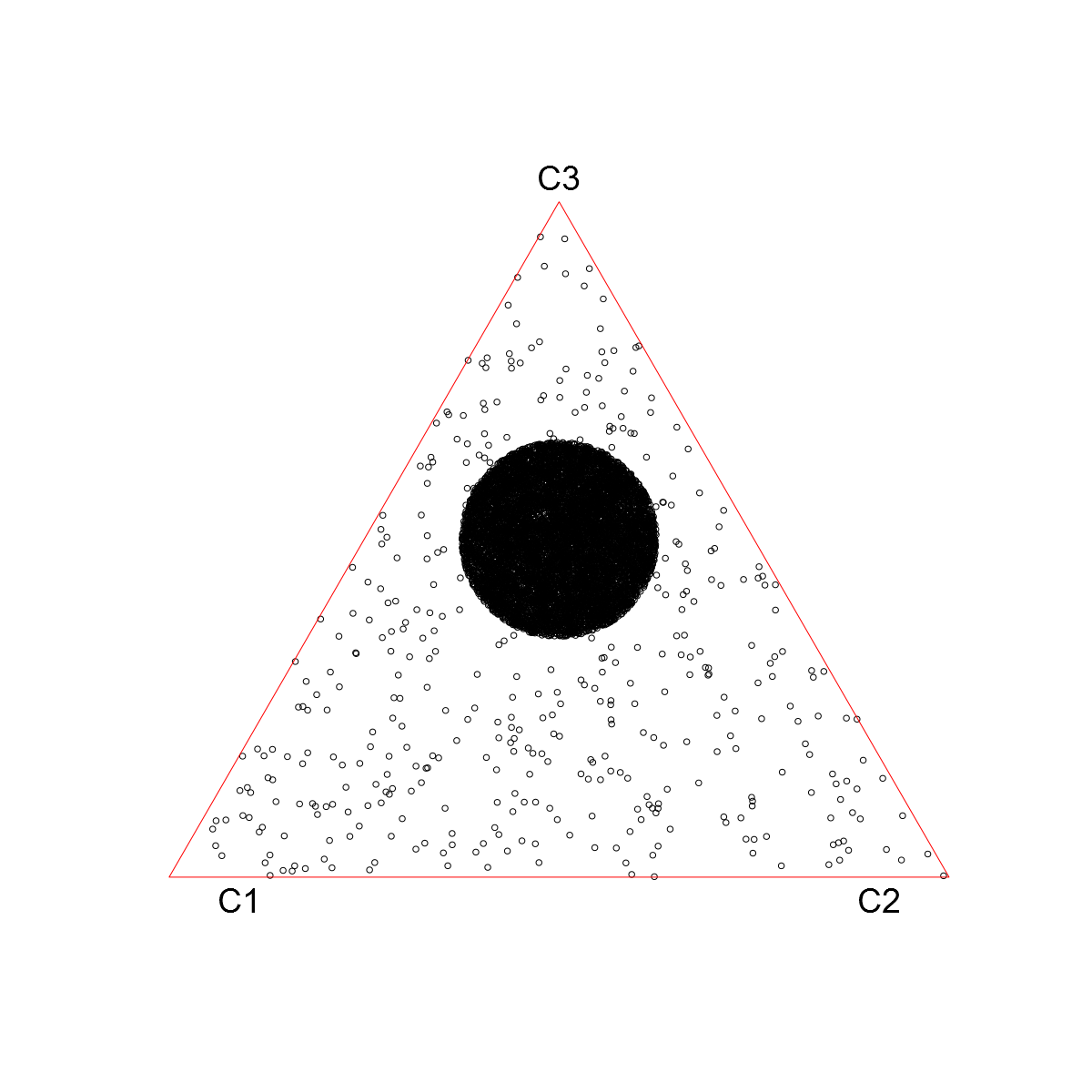}
    \caption{}
    \label{simfig:1}
  \end{subfigure}
  \begin{subfigure}[b]{0.24\textwidth}
    \includegraphics[width=\textwidth]{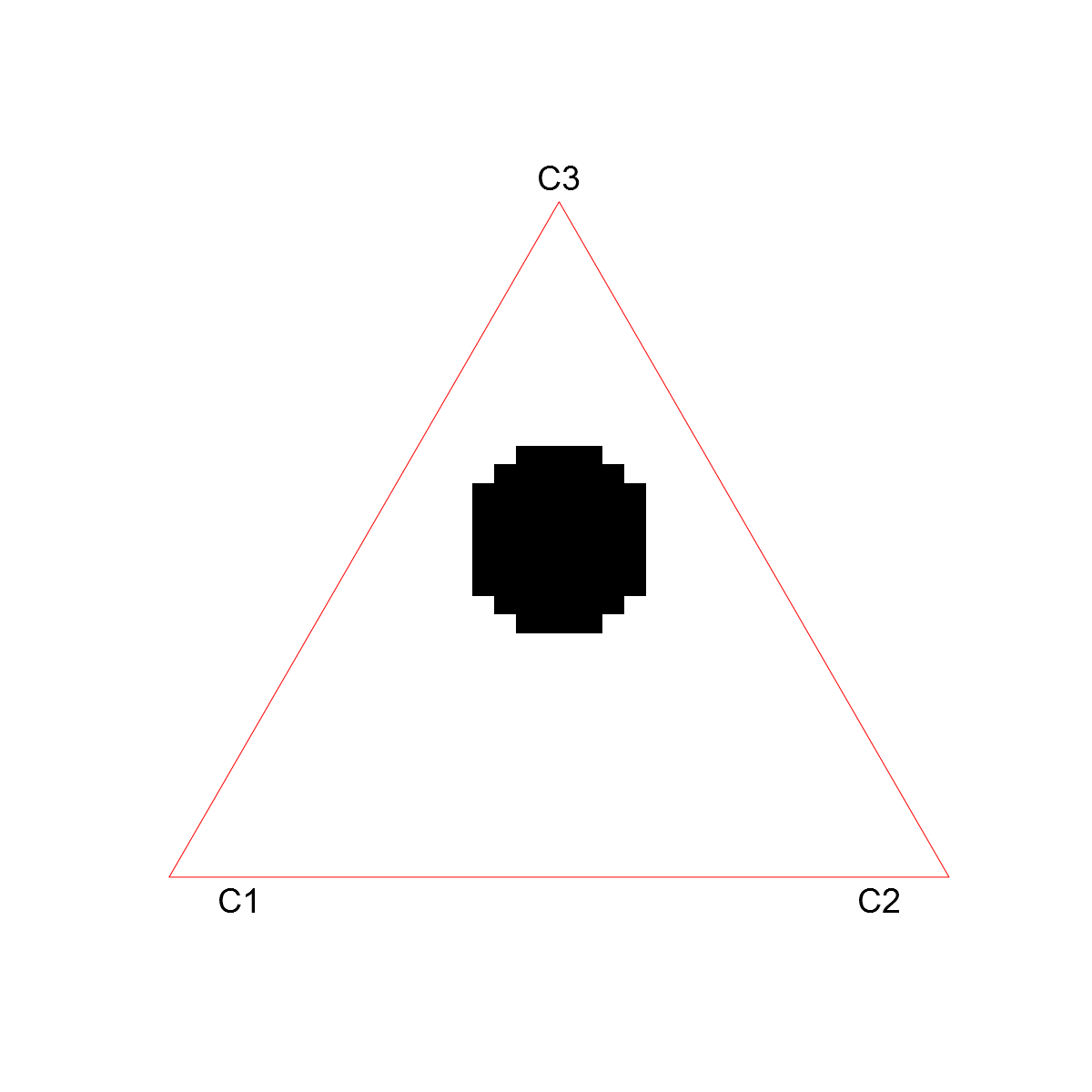}
    \caption{}
    \label{simfig:2}
  \end{subfigure}
  \begin{subfigure}[b]{0.24\textwidth}
    \includegraphics[width=\textwidth]{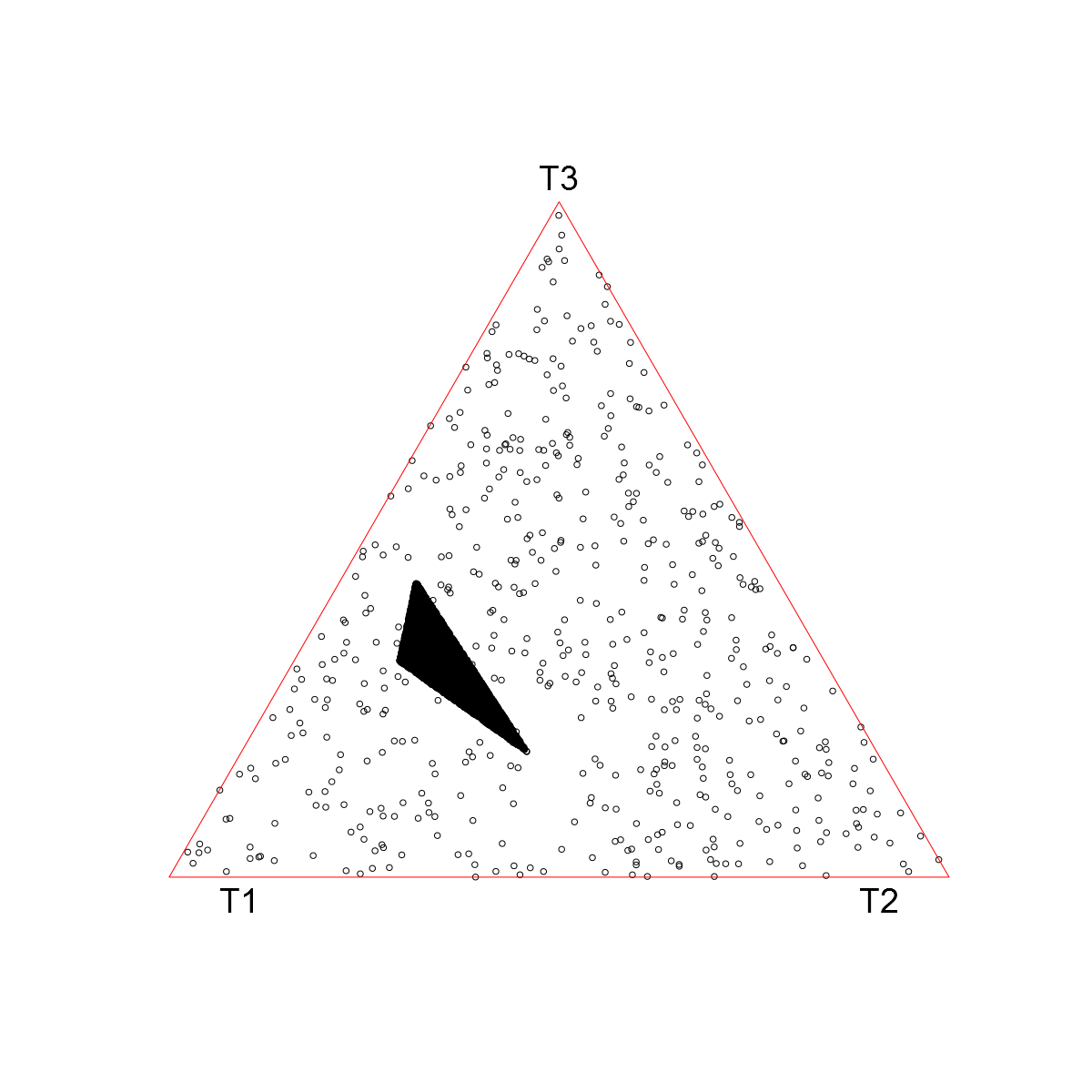}
    \caption{}
    \label{simfig:3}
  \end{subfigure}
  \begin{subfigure}[b]{0.24\textwidth}
    \includegraphics[width=\textwidth]{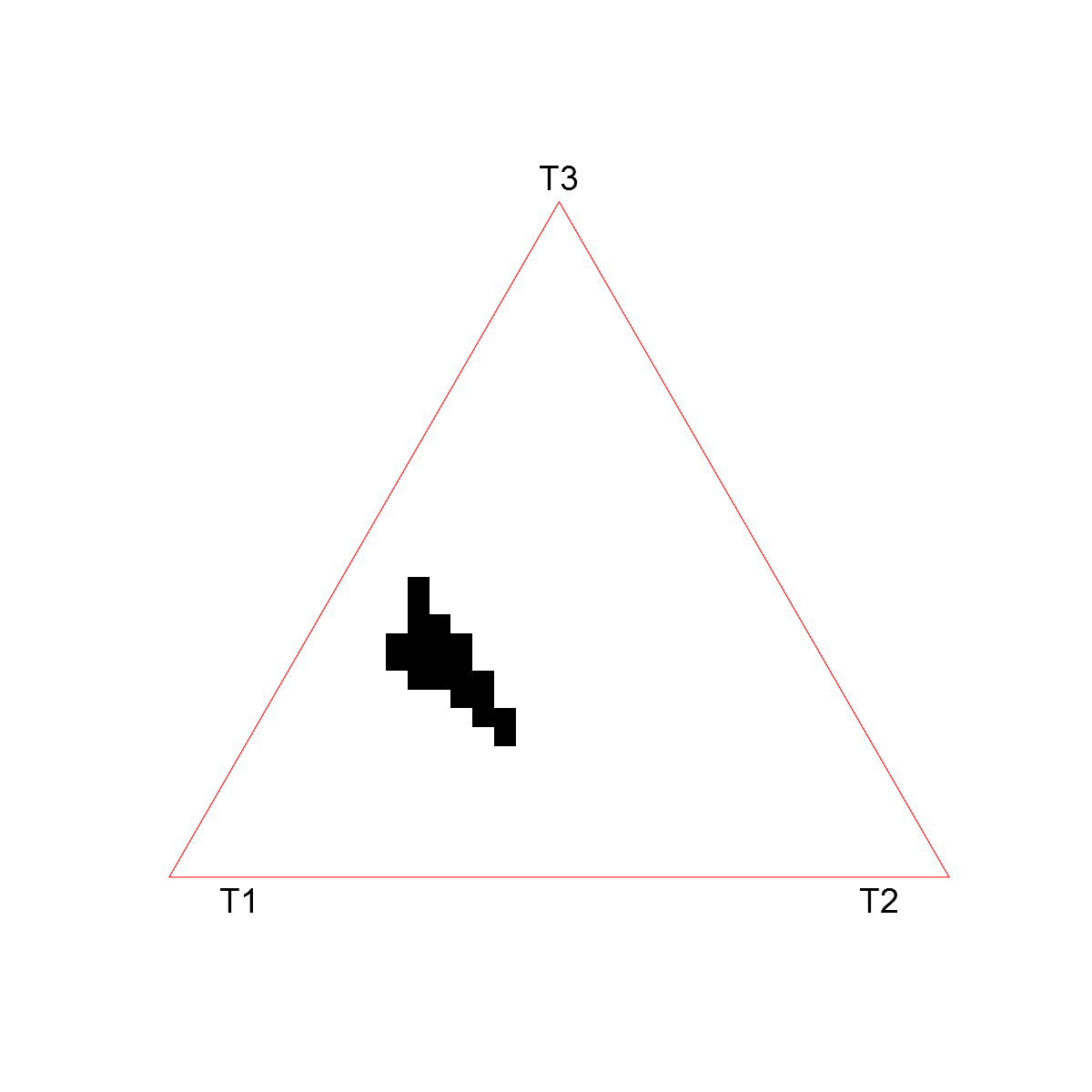}
    \caption{}
    \label{simfig:4}
  \end{subfigure}
  \caption{Figures present projected and estimated supports of simulated data. The large red triangles indicate the boundaries of the image of $C^3_+$ under a simplex mapping $T$ discussed in Section \ref{smapping} and Example \ref{exprojections}.}
  \label{fig:simexample}
\end{figure}

 Figure \ref{fig:simexample} Illustrates how well the grid based
 support estimate is able to find the location of the set $B$. The
 dots in figures \ref{simfig:1} and \ref{simfig:3} are the projected
 $k=10000$ largest observations in  $L_1$-norm. The dark dense region is the
 set $B$, which is a circle in \ref{simfig:1} and a triangle in
 \ref{simfig:3}. In figures \ref{simfig:2} and \ref{simfig:4} the set
 $B$ is estimated by forming the support estimator $\smqest$ using
 parameter values $k=10000$, $m=36$ and $q=0.01$. Rejecting 
 points with positive $q$ produces a clearly visible rasterized
 version of $B$ with no misidentified cells. This is due to the fact
 that our simulated data fits perfectly to the MRV framework.

 The
 following examples show that real data produces  less conclusive
 results. 

\subsection{Stock data vs. catastrophe fund}\label{stockexample}

In this example, we study stock market dependencies using a
data set consisting of daily prices of 6 stocks and a catastrophe
fund. The studied securities and their ticker symbol
abbreviations are: Google (GOOG), Microsoft (MSFT), Apple (AAPL),
Chevron (CVX), Exxon (XOM), British Petrol (BP) and CATCo Reinsurance
Opportunities Fund (CAT.L). Observations range from December 20, 2010
to 10 July, 2018. The data set was downloaded using the R-package {\em Quantmod}.

Observations were converted to log-returns
 by taking the logarithm of the price and calculating
differences. Apart from CAT.L
the resulting returns have similar
tail indices for positive and negative
tails.
 That is, the magnitudes of the estimated tail indices corresponding to the stock components are close to each other. However, the index of
CAT.L was substantially smaller than the others, making it necessary
to use the rank transform when comparing it against the other equities. 

\begin{figure}[h]
  \begin{center}
  \includegraphics[width=0.5\linewidth]{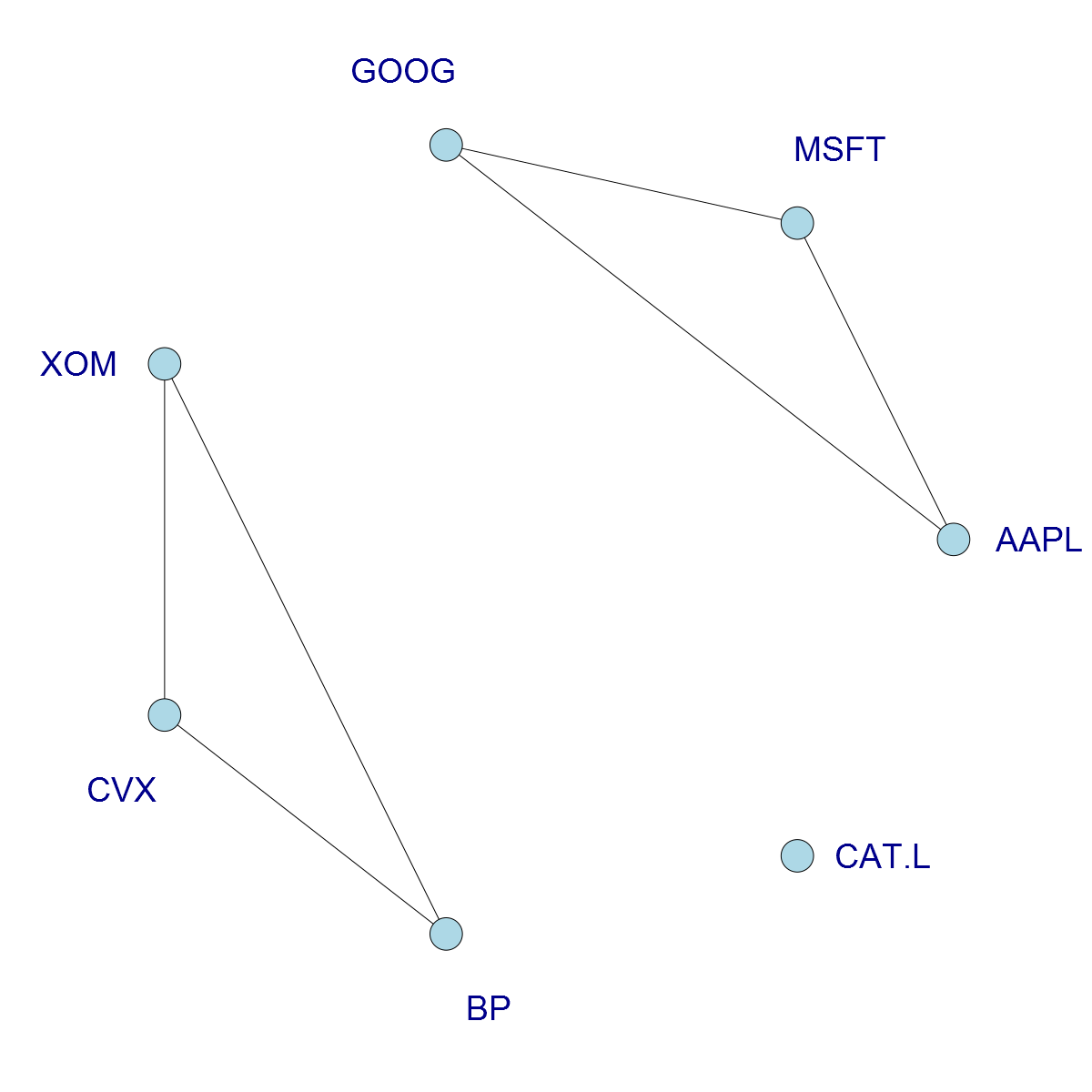}
  \end{center}
  \caption{An exploratory graph illustrating the strongest preliminary pairwise asymptotic dependencies.} 
  \label{fig:stockdatadependencies}
\end{figure}

In Figure \ref{fig:stockdatadependencies}, the strength of pairwise
dependence is calculated using the largest $k=200$ observations in
$L_1$-norm projected to $C^2_+$, denoted $z_1,z_2,\ldots,z_{200}$, by
 $$\left(\sum_{i=1}^k
(1-d_2(1/2,z_i))\right)\Big{/}k .$$ 
That is, the dependence measure assigns pairs
that have the largest
distance to the midpoint $(1/2,1/2)$ of simplex
$C^2_+$  small weights and pairs near the midpoint
large weights. All pairwise dependencies that exceed the level $0.46$
are drawn as edges in Figure \ref{fig:stockdatadependencies}. The
level was obtained empirically by gradually lowering the required
level and observing which connections appeared on the graph first,
that is, which dependencies were the strongest.  

Figure \ref{fig:stockdatadependencies} suggests that companies within
the same financial sector, oil or technology, are probably not
 asymptotically independent but that the catastrophe
fund might be asymptotically independent from stocks. 

\begin{figure}[h]
  \begin{subfigure}[b]{0.24\textwidth}
    \includegraphics[width=\textwidth]{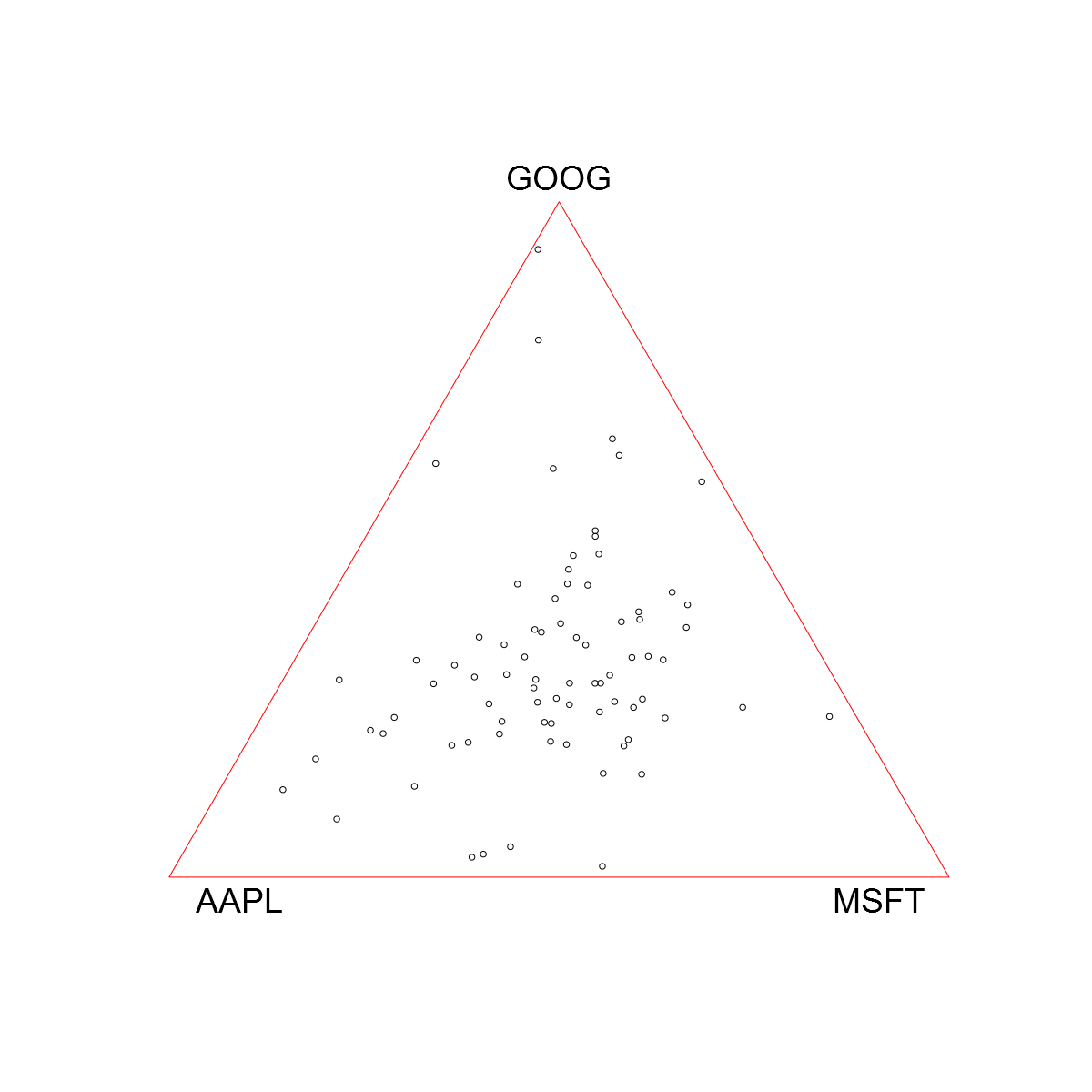}
    \caption{}
    \label{stockfig:1}
  \end{subfigure}
  \begin{subfigure}[b]{0.24\textwidth}
    \includegraphics[width=\textwidth]{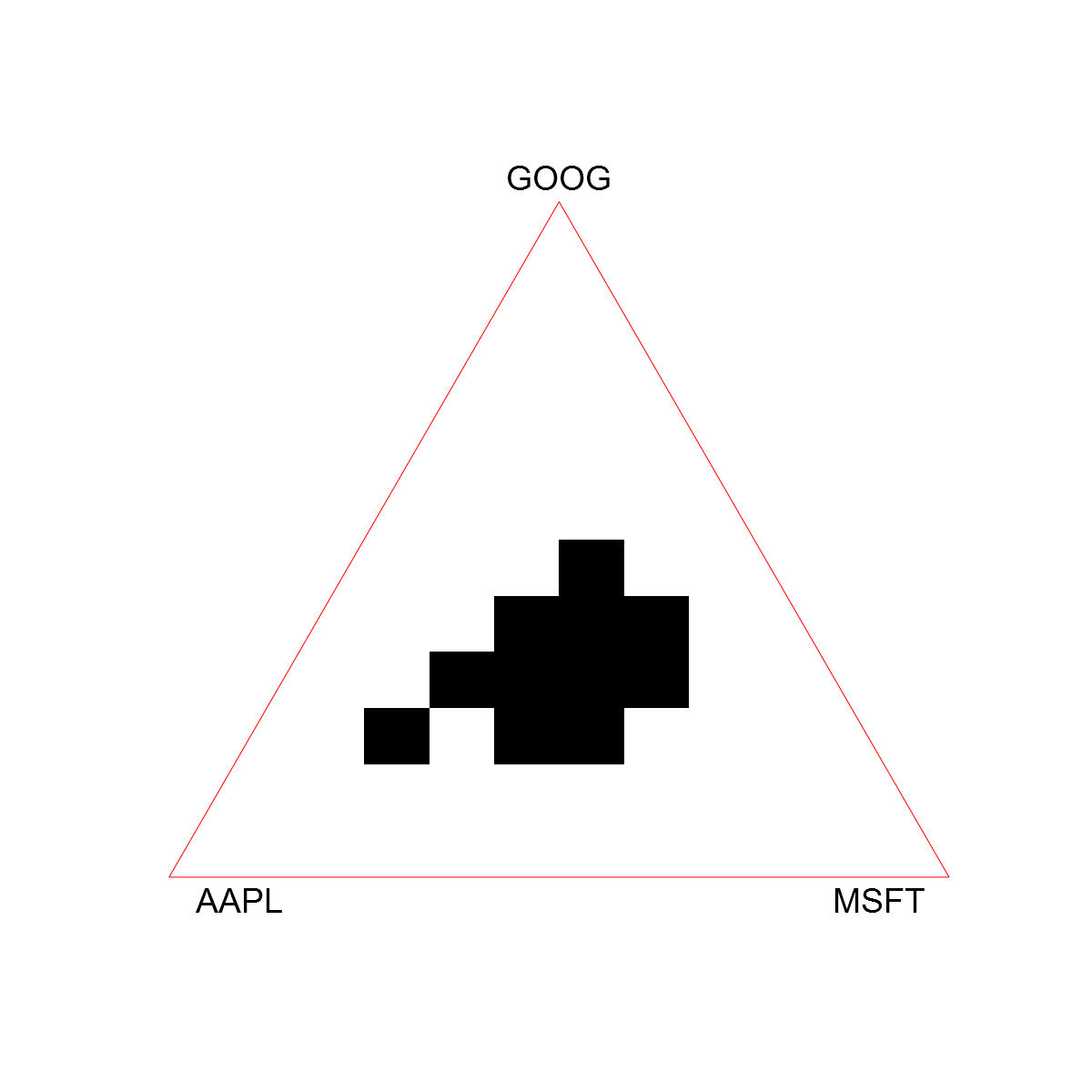}
    \caption{}
    \label{stockfig:2}
  \end{subfigure}
  \begin{subfigure}[b]{0.24\textwidth}
    \includegraphics[width=\textwidth]{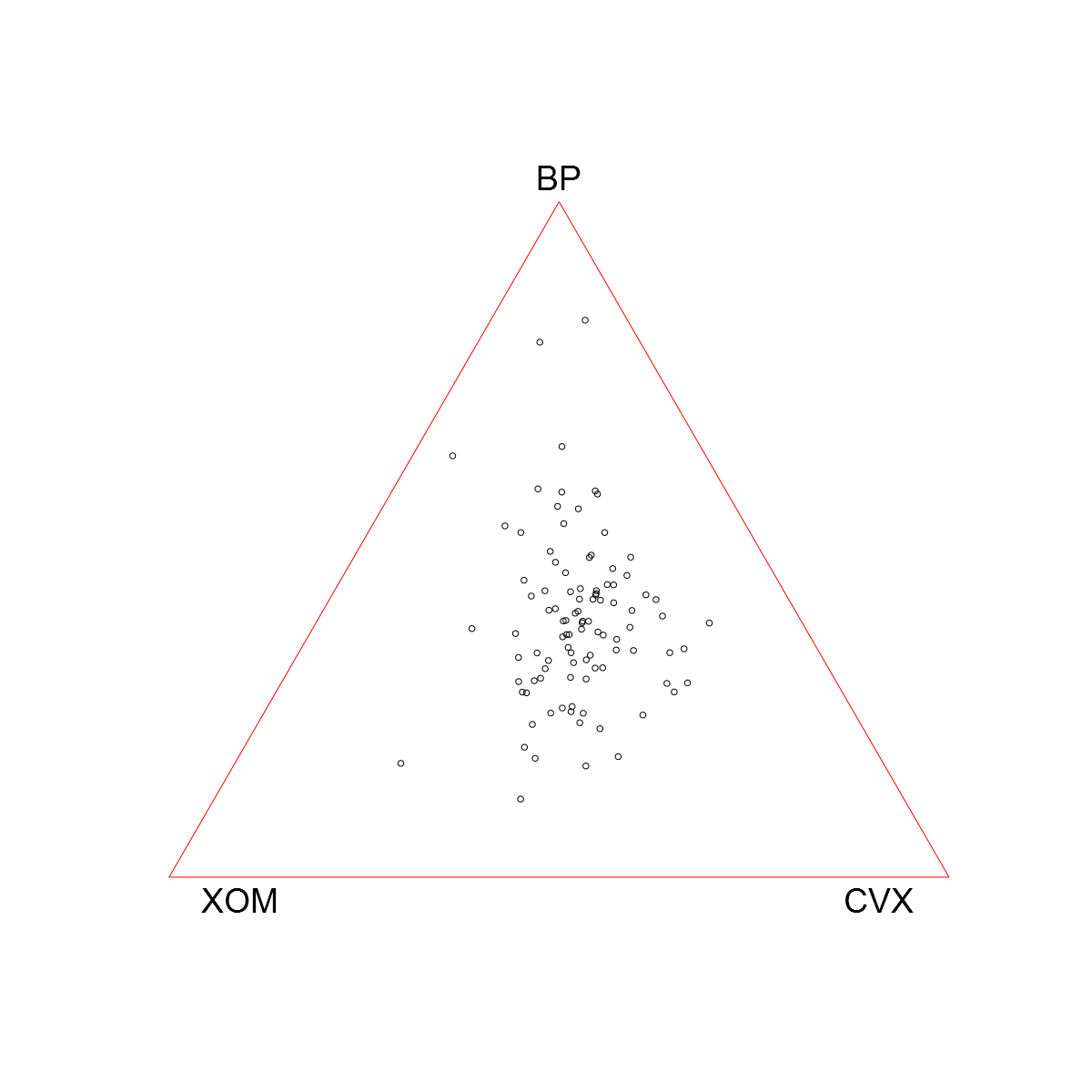}
    \caption{}
    \label{stockfig:3}
  \end{subfigure}
  \begin{subfigure}[b]{0.24\textwidth}
    \includegraphics[width=\textwidth]{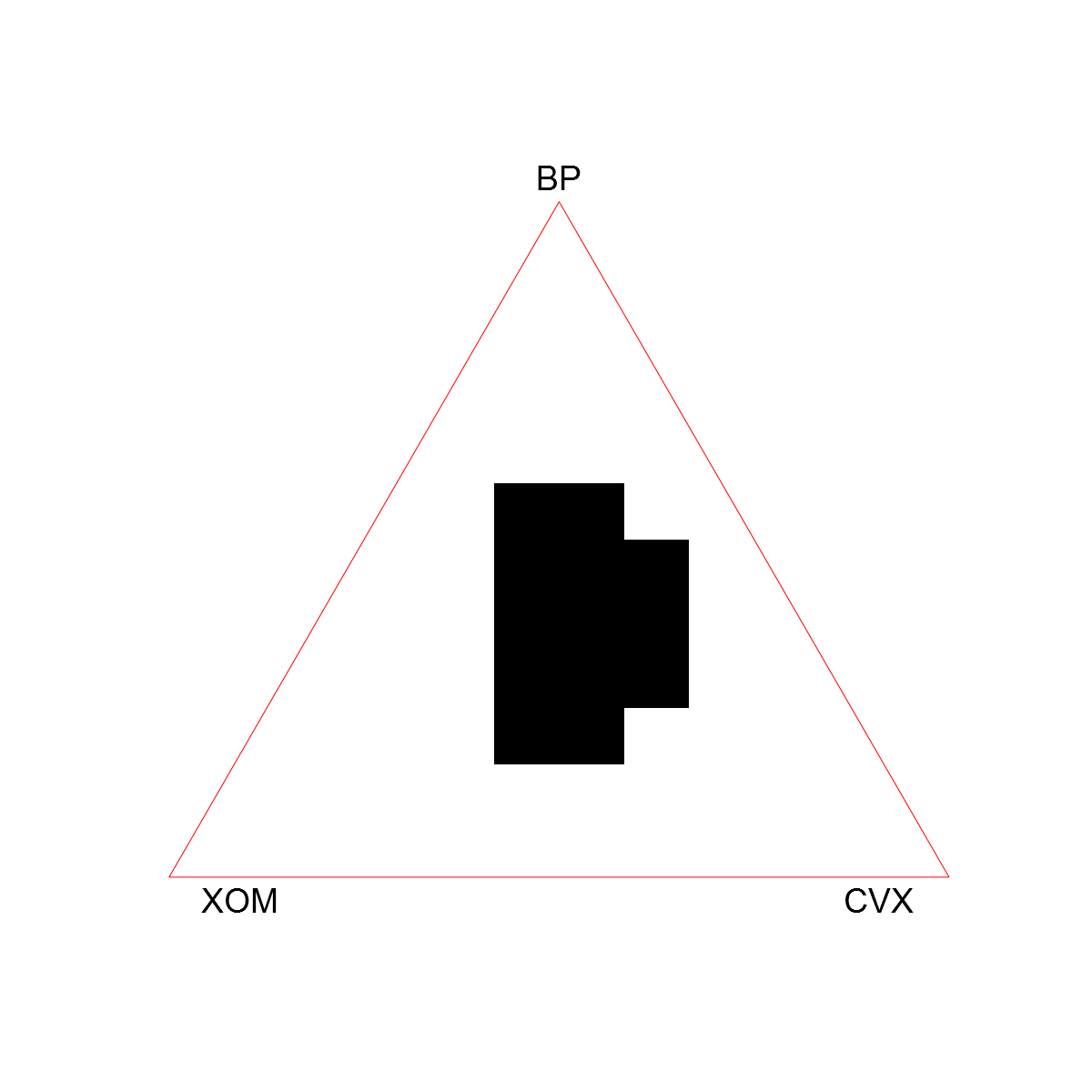}
    \caption{}
    \label{stockfig:4}
  \end{subfigure}
    \caption{Projected largest $k=200$ observations and the estimated supports for tech stocks, in Figures \ref{stockfig:1}-\ref{stockfig:2}, and for oil stocks, in Figures \ref{stockfig:3}-\ref{stockfig:4}, respectively.}
    \label{fig:stocksdependence}
\end{figure}
In Figure \ref{fig:stocksdependence}, projected and estimated supports
of positive quadrants are depicted for oil and tech stocks. We
  used parameter
values $k=200$, $m=12$ and $q=0.01$. Estimated grid based supports
in subfigures \ref{stockfig:2} and \ref{stockfig:4} suggest that
within a group, stock returns are not asymptotically independent
and, in fact, exhibit fairly strong dependence.  
However, based on Figure
\ref{fig:stockdatadependencies} oil and tech sectors might be
asymptotically independent and CAT.L could be asymptotically
independent of all the studied stocks. 

\begin{figure}[h]
    \begin{subfigure}[b]{0.24\textwidth}
    \includegraphics[width=\textwidth]{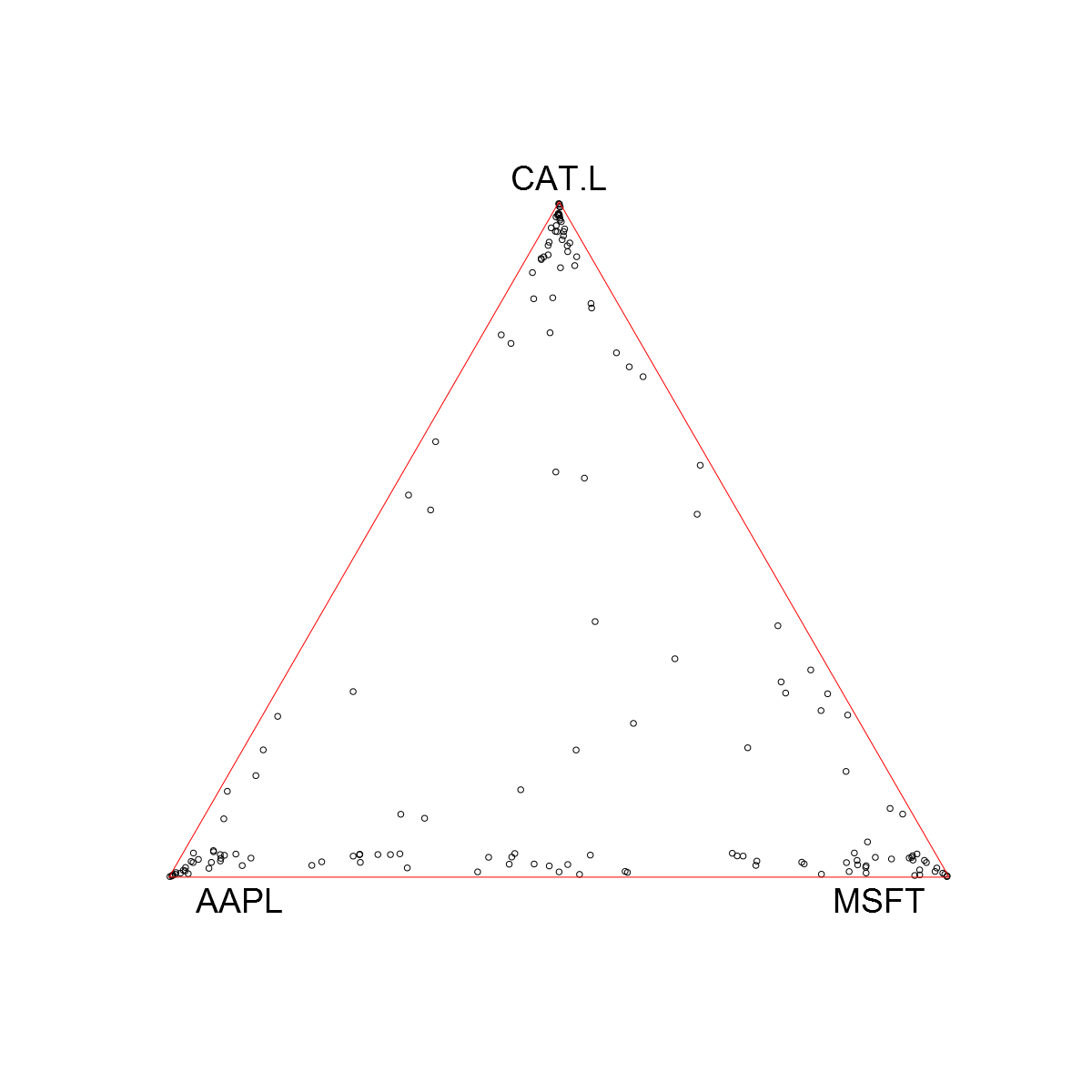}
    \caption{}
    \label{stockfig:5}
  \end{subfigure}
  \begin{subfigure}[b]{0.24\textwidth}
    \includegraphics[width=\textwidth]{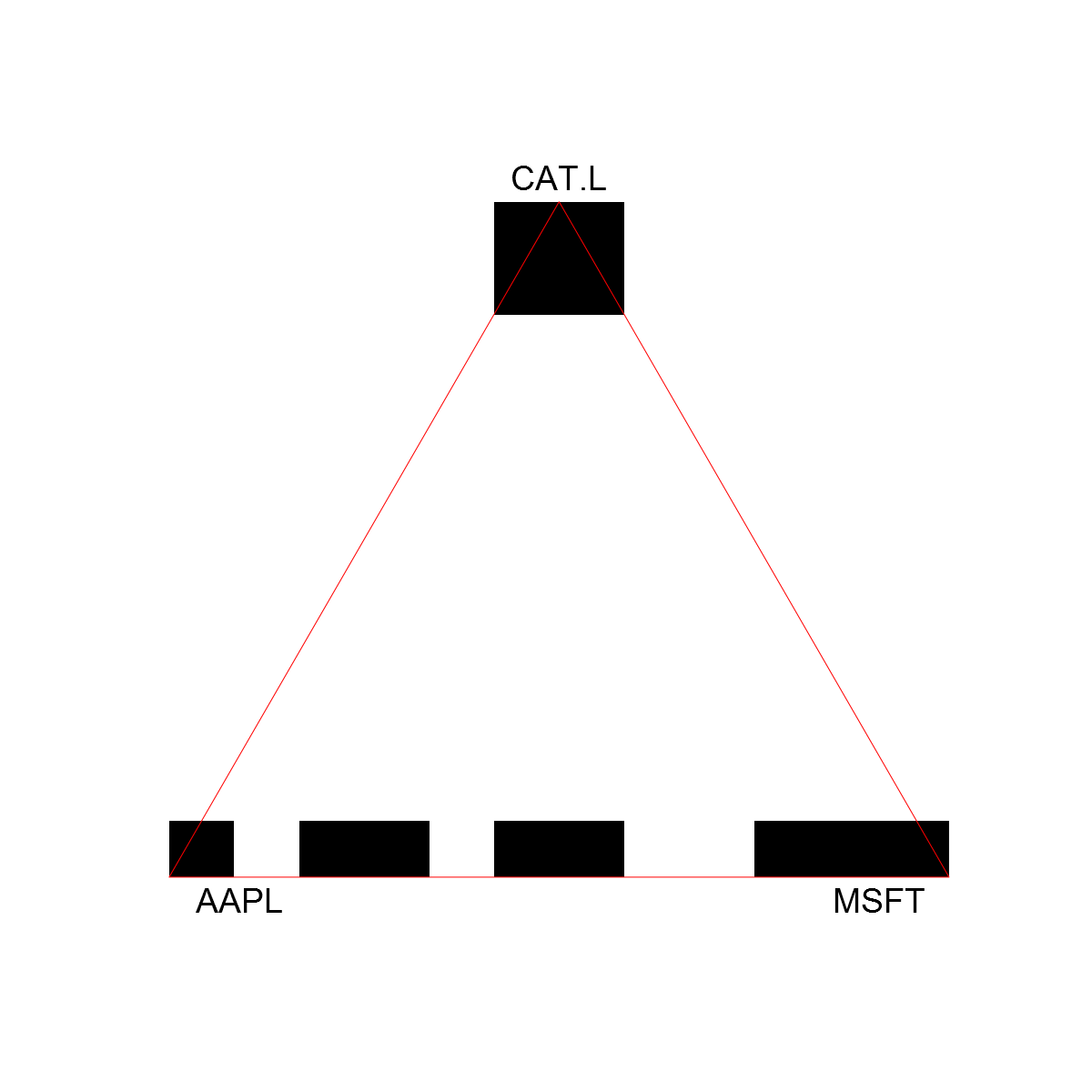}
    \caption{}
    \label{stockfig:6}
  \end{subfigure}
  \begin{subfigure}[b]{0.24\textwidth}
    \includegraphics[width=\textwidth]{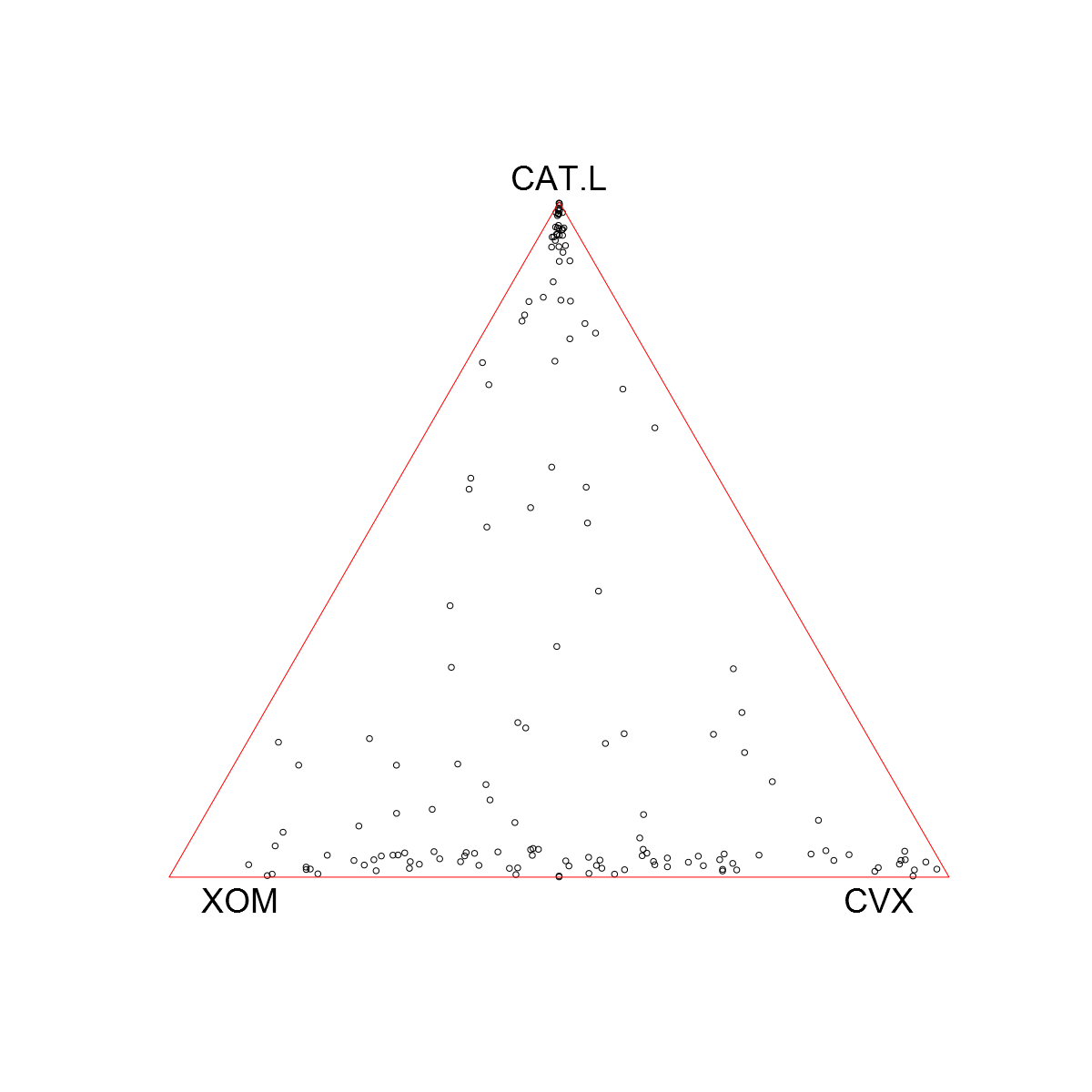} 
    \caption{}
    \label{stockfig:7}
  \end{subfigure}
  \begin{subfigure}[b]{0.24\textwidth}
    \includegraphics[width=\textwidth]{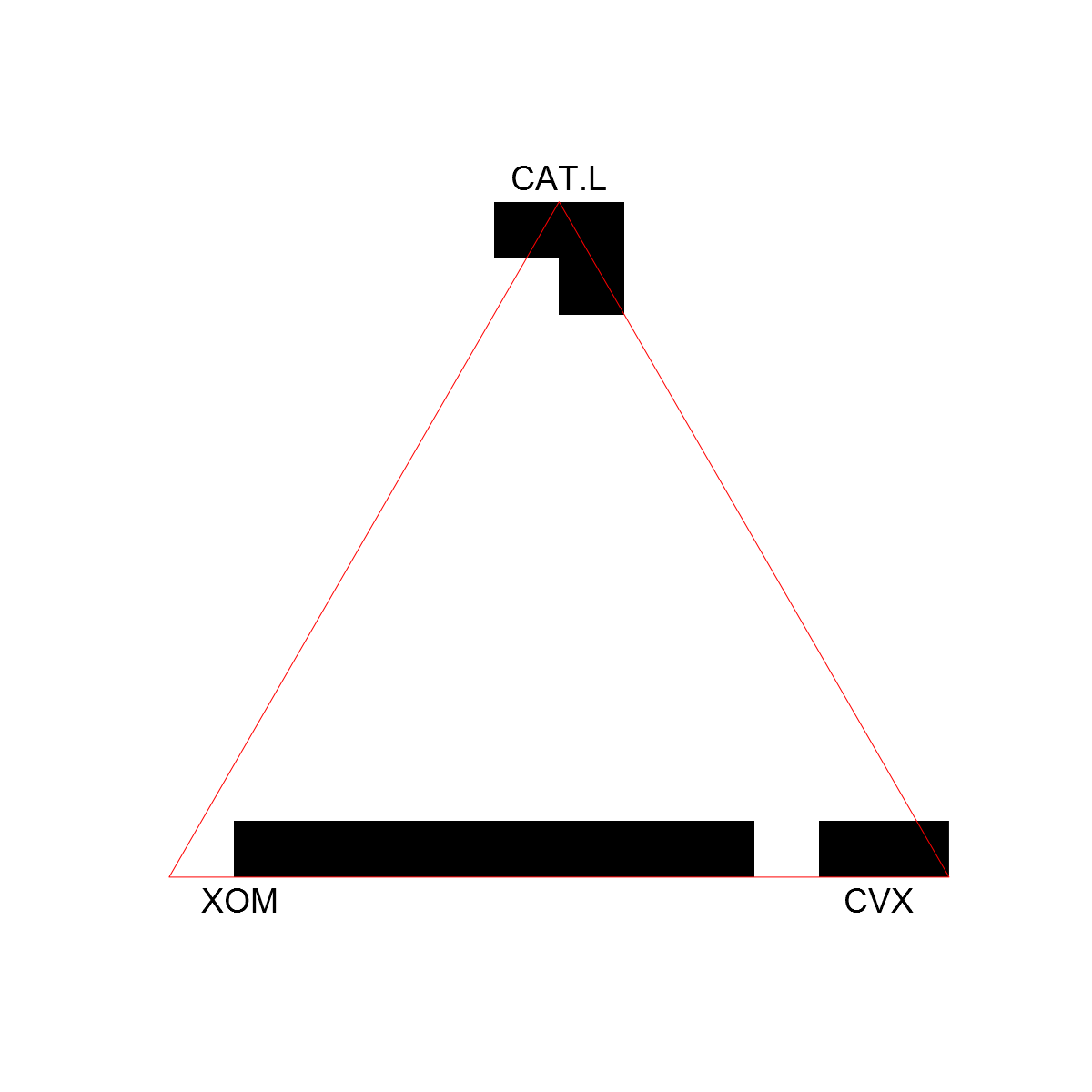}
    \caption{}
    \label{stockfig:8}
  \end{subfigure}
  \caption{Projected and estimated supports of triples of stocks
    including CAT.L.}
  \label{fig:stockexample}
\end{figure}

Asymptotic independence was tested using absolute values of
observations. Test statistics were calculated based on the $k=50$ largest observations in $L_1$ norm. Function $g$ was defined for $m=2$. The intervals of Definition \ref{gdef} were set to be of the form
$a_1=0,b_1=c,a_2=1-c$ and $b_2=1$, where $c\in \{0,0.05,0.1\}$. That is, asymptotic independence
was tested with and without buffers. In addition, it was assumed that
$\s_1([0,c])=\s_1([1-c,1])=1/2$. 
Observations from first the
oil and then the tech group were added together in order to form two dimensional
vectors. The empirical test statistic $\hat{t}$ corresponding to
$\hat{T}$ of \eqref{thm3eq1} was formed in order to calculate the probability $\prob(\hat{T}>|\hat{t}|)$ for different values of $c$. The approximate values of the probability were $2.58\cdot 10^{-5}$, $9.84\cdot 10^{-4}$ and $6.16\cdot 10^{-3}$ corresponding to $c=0$, $c=0.05$ and $c=0.1$, respectively. So, under the null hypothesis of asymptotic
  independence, there is significant evidence against asymptotic independence of the oil
and tech sectors. This is a bit surprising given the preliminary observations of Figure \ref{fig:stockdatadependencies}. 

Similar tests were
performed pairwise with CAT.L against all $6$ stocks. There was
insufficient
 evidence based on the test statistics corresponding to
\eqref{thm3eq1} to reject  asymptotic independence. More precisely, Table \ref{pairwisetable} gives approximations for the probability $\prob(\hat{T}>|\hat{t}|)$ for different values of $c$. 

\begin{center}
\begin{tabular}{ l | c c c c c c }
  c & AAPL & MSFT & GOOG & XOM & CVX & BP \\
  \hline
  $0$ & 0.23 & 0.04 & 0.10 & 0.04 & 0.08 & 0.10 \\
  $0.05$ & 0.38 & 0.13 & 0.25 & 0.18 & 0.26 & 0.26 \\
  $0.1$ & 0.40 & 0.17 & 0.30 & 0.29 & 0.34 & 0.31 \\
\end{tabular}
\captionof{table}{Values of $\prob(\hat{T}>|\hat{t}|)$ when the pairwise asymptotic independence of CAT.L and each of the $6$ stocks is studied. The column indicates which stock is tested against CAT.L. The row indicates which value of $c$ is used.}\label{pairwisetable}
\end{center}

In conclusion, the test statistic developed in Section
\ref{asindsection} supports 
the exploratory analysis of the preliminary dependence
graph in Figure \ref{fig:stockdatadependencies} in the sense that CAT.L seems to be asymptotically independent from the $6$ stocks. Asymptotic independence of the tech and oil sectors, however, was not strong enough to pass a more rigorous test.

\subsection{FMI data}\label{fmiexample}

Daily rainfall data from three separate locations
was downloaded from the Finnish Meteorological
Institute.  To reduce
seasonal effects, we only used  observations from summer
months June, July and August and 
the total number of observations   was $n=3864$.  
 Two of the locations, Kouvola and Savonlinna are
close to each other whereas the last one, Sodankylä, was further away.
Rainfall in  nearby locations showed high dependence. The
rainfall at  the further location, while not  independent of the
two others, exhibited extremal independence of the largest
observations.

\begin{figure}[h]
    \begin{subfigure}[b]{0.49\textwidth}
    \includegraphics[width=\textwidth]{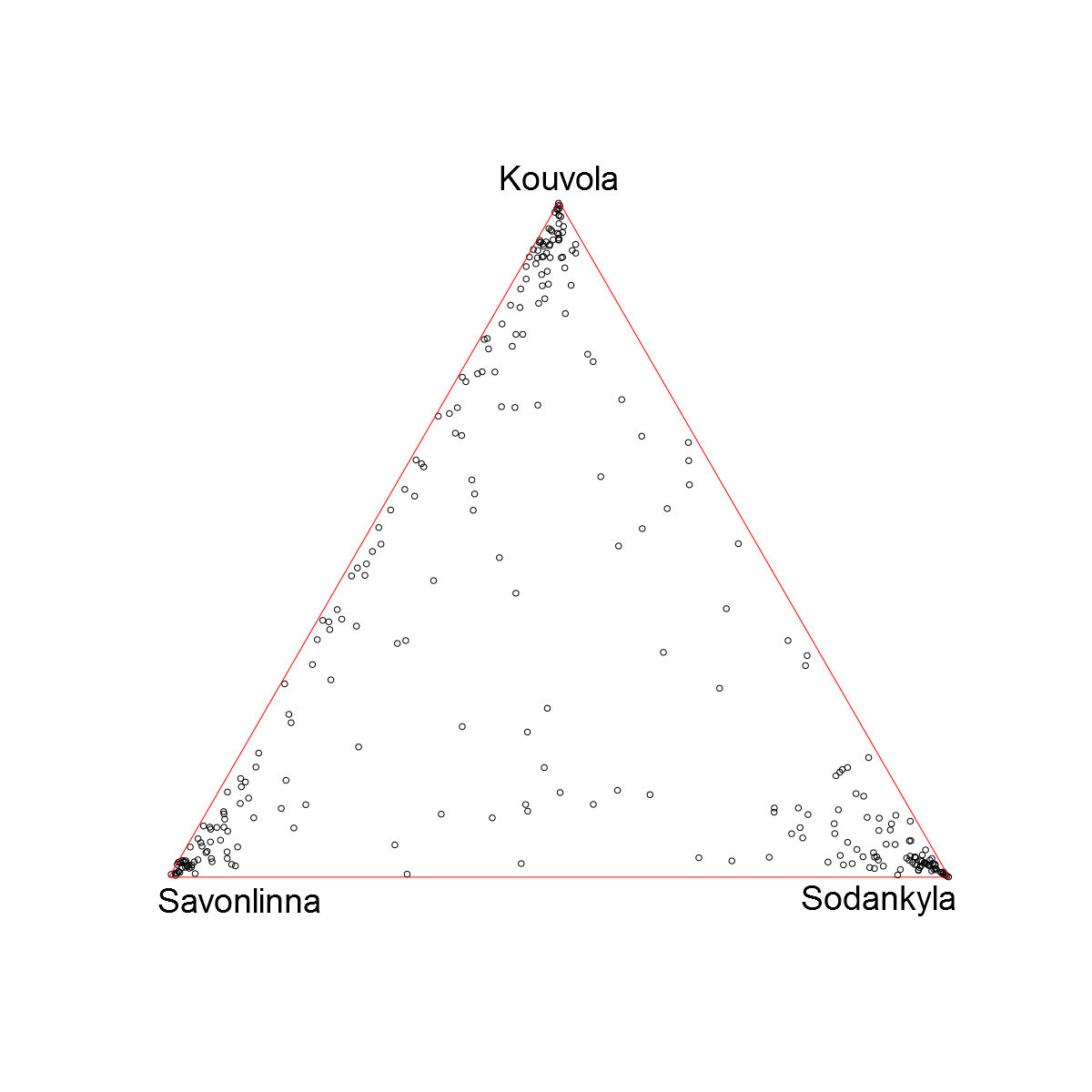}
    \caption{}
    \label{fmifig:1}
  \end{subfigure}
  \begin{subfigure}[b]{0.49\textwidth}
    \includegraphics[width=\textwidth]{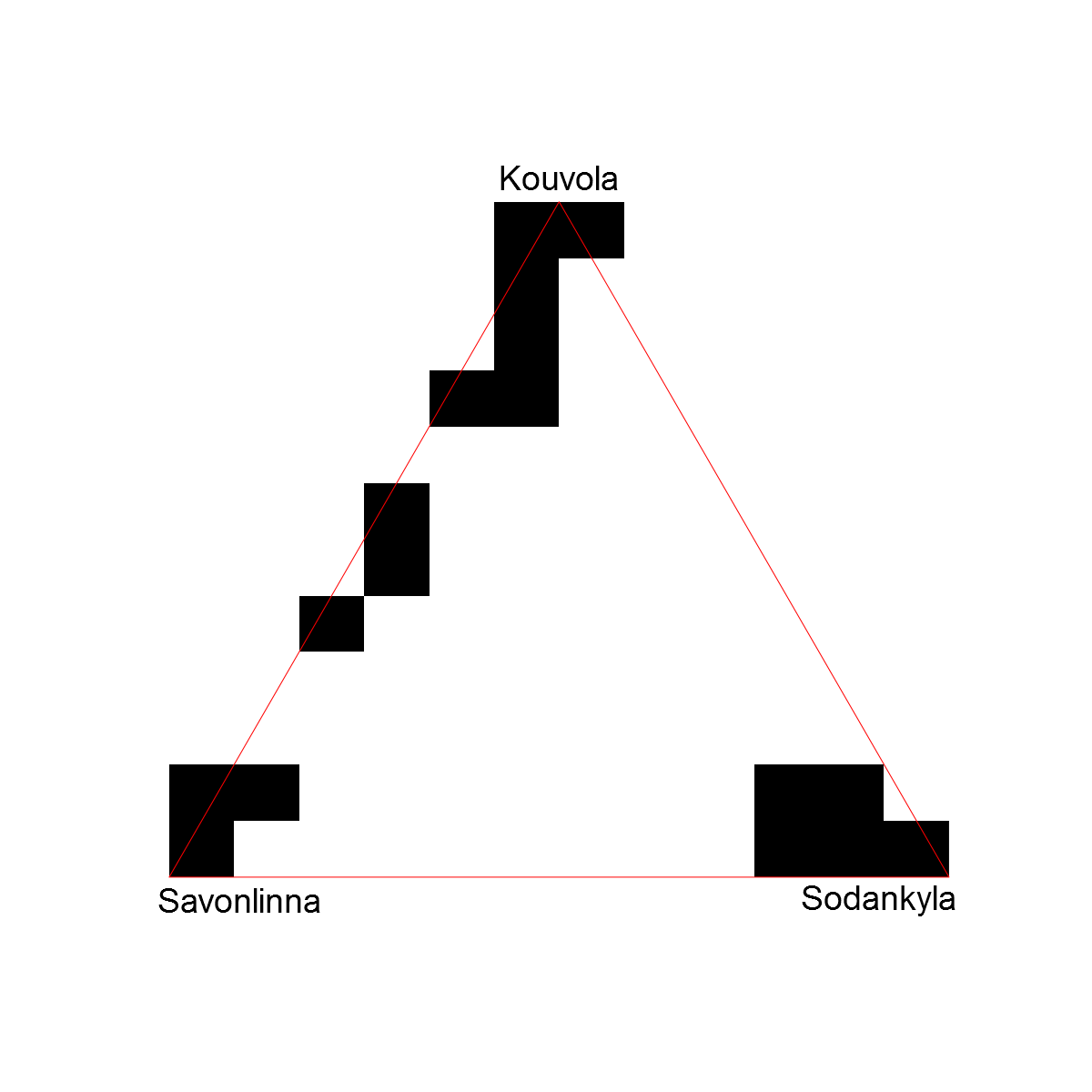}
    \caption{}
    \label{fmifig:2}
  \end{subfigure}
  \caption{Projected and estimated supports of daily rainfall data recorded in 3 locations in Finland.}
  \label{fig:fmiexample}
\end{figure}

Figure \ref{fig:fmiexample} shows projected 3-dimensional
  points and
estimated supports of the
rank transformed rainfall vectors  using $k=300$ largest
observations. For the support estimate, we used parameter values $m=12$ and
$q=0.01$.
 The rainfall data supports the idea that
locations in close proximity (Savonlinna, Kouvola) are 
dependent and locations far 
away from each other are asymptotically independent.

\subsection{Gold vs Silver price data}\label{goldsilverexample}

In this section, we study a data set consisting of daily gold
  and silver prices.  The data is gathered from London Bullion Market
Association. It is downloaded via the R-package {\em
  Quandl}. In the data, the price of one ounce of gold or silver is
recorded each day during a time period ranging from December 3, 1973
to January 15, 2014. Only complete cases where the price information
was available from both gold and silver were accepted as part of the
data set. There were three days where price information was
incomplete. Large price fluctuations did not occur during the omitted
days and thus ignoring them has no effect to the resulting asymptotic
analysis. 

We transformed the daily price data to log-returns 
to obtain a sample which is better suited with the
iid assumption of the model. The individual positive and negative
marginals of gold and silver were reasonably heavy tailed.
No power or rank transformations seemed necessary to standardize
the data set. The resulting sample of $n=10323$ was
thresholded by the $k=200$ largest observations in the $L_1$-norm and then
projected onto $C^2$ to produce the diamond plot presented in Figure
\ref{fig:goldsilverdata}.

\begin{figure}[!htb]
  \begin{center}
  \includegraphics[width=0.7\linewidth]{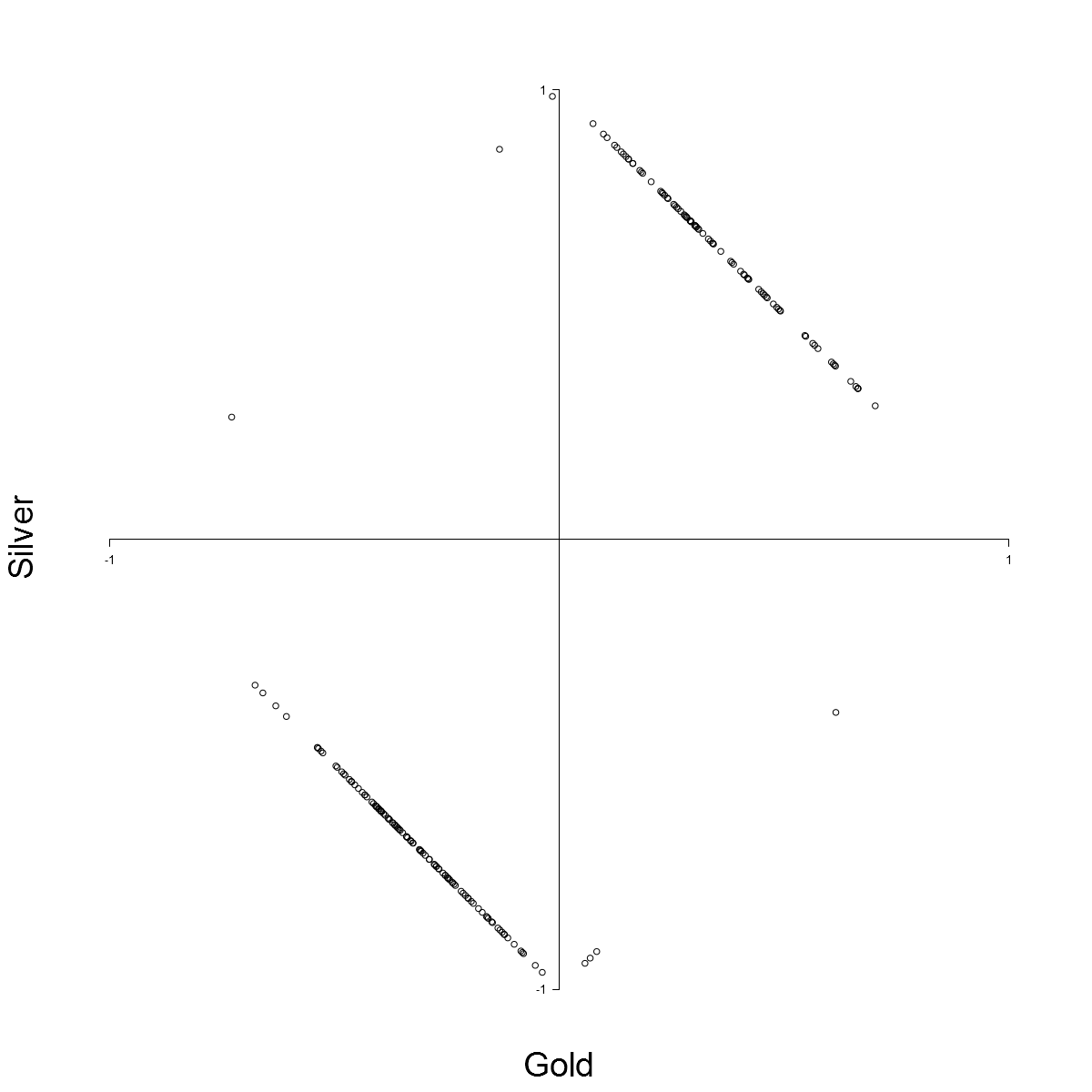}
  \end{center}
  \caption{Diamond plot of daily price returns of gold and
    silver. 
The horizontal axis corresponds to gold and
    vertical axis to silver.}
  \label{fig:goldsilverdata}
\end{figure}

Figure \ref{fig:goldsilverdata} shows that the largest fluctuations in
gold and silver prices tend to occur to the same direction. In
addition, it seems that the points do not fill the positive or
negative quadrant of the $C^2$ simplex evenly, but concentrate on
intervals. The estimation of the asymptotic support in the negative
quadrant was chosen as a suitable example, analysis of the other quadrants
could be performed similarly. So, only the part of data where both
components are negative was used. The $n=3951$ observations were
multiplied by $-1$ to obtain a data set in the positive quadrant.   

The one dimensional grid based estimator was obtained using the first
$1975$ observations sampled uniformly without replacement from the
data. The points were projected using a simplex mapping $T\colon C^2_+
\to [0,1]$ defined by $T(x,y)=x$. Since gold is on the horizontal
axis, the projected values on $[0,1]$ close to $0$ correspond to
large losses in silver and values near $1$ correspond to
  large losses in gold prices. 
  
  The grid based support estimator
with parameter values $n=1975$, $k=100$, $m=15$ and $q=0.02$ suggests
that the asymptotic support should be covered by interval $[0,0.65]$. 

\begin{figure}[h]
    \begin{subfigure}[b]{0.24\textwidth}
    \includegraphics[width=\textwidth]{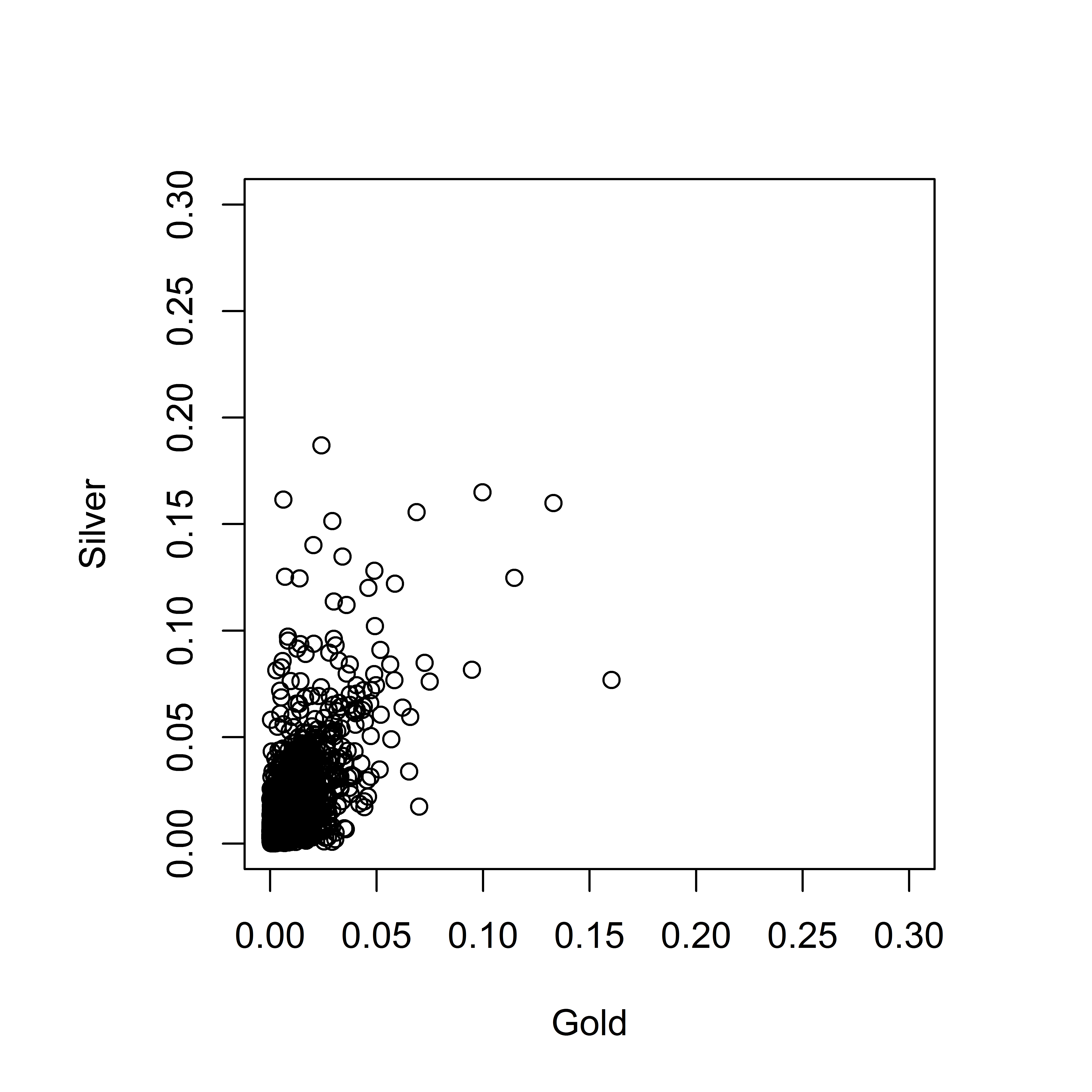}
    \caption{}
    \label{negativesfig:1}
  \end{subfigure}
  \begin{subfigure}[b]{0.24\textwidth}
    \includegraphics[width=\textwidth]{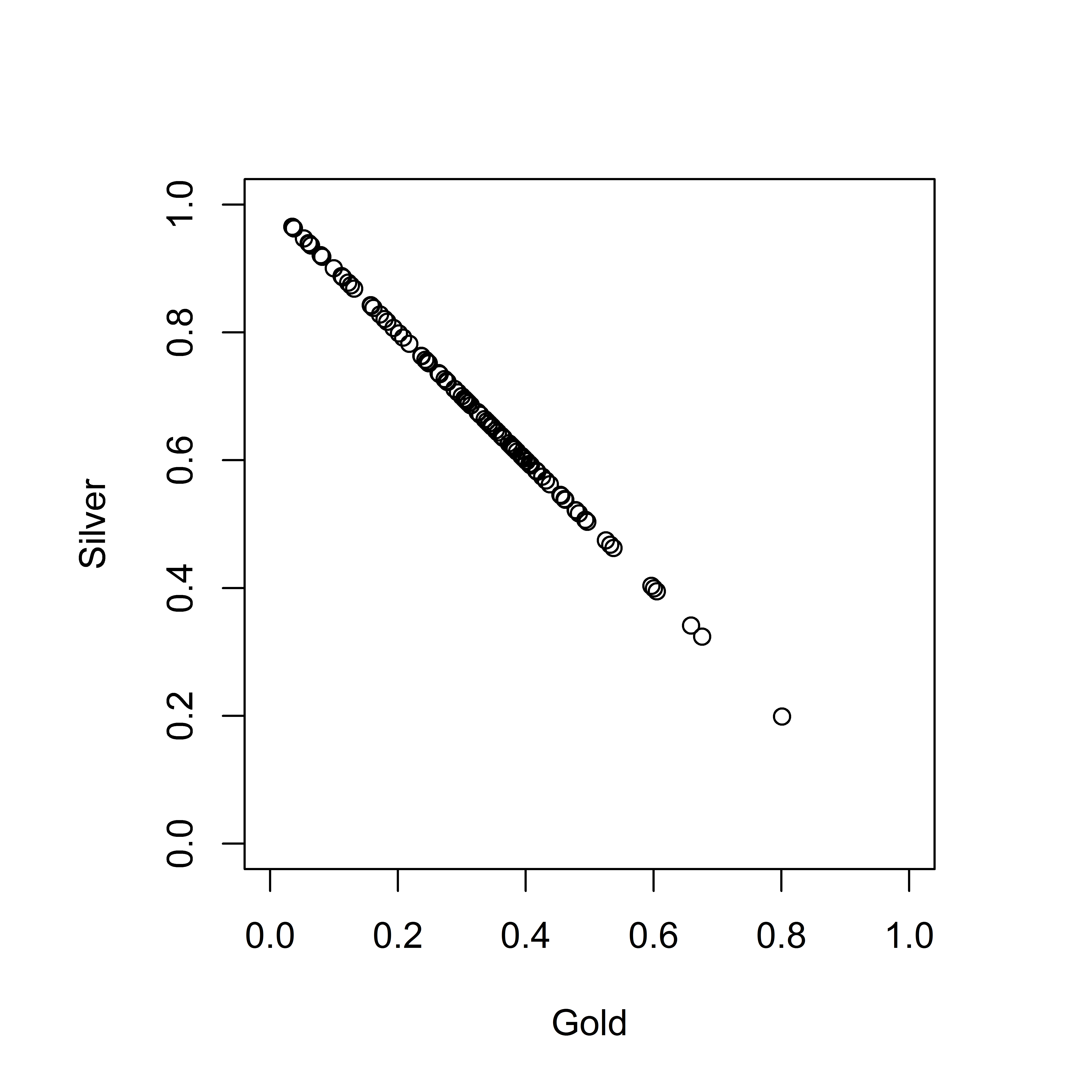}
    \caption{}
    \label{negativesfig:2}
  \end{subfigure}
    \begin{subfigure}[b]{0.24\textwidth}
    \includegraphics[width=\textwidth]{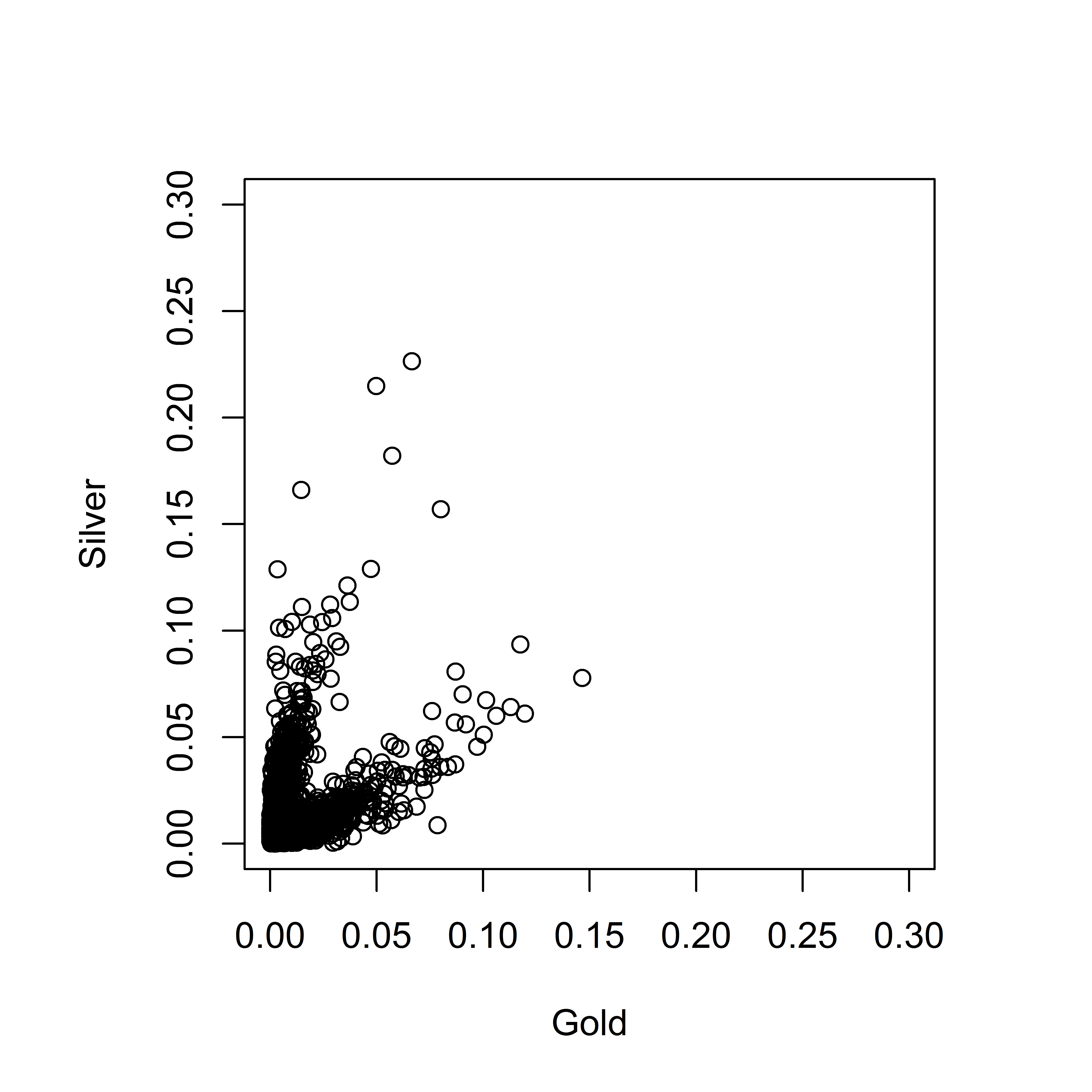}
    \caption{}
    \label{negativesfig:3}
  \end{subfigure}
    \begin{subfigure}[b]{0.24\textwidth}
    \includegraphics[width=\textwidth]{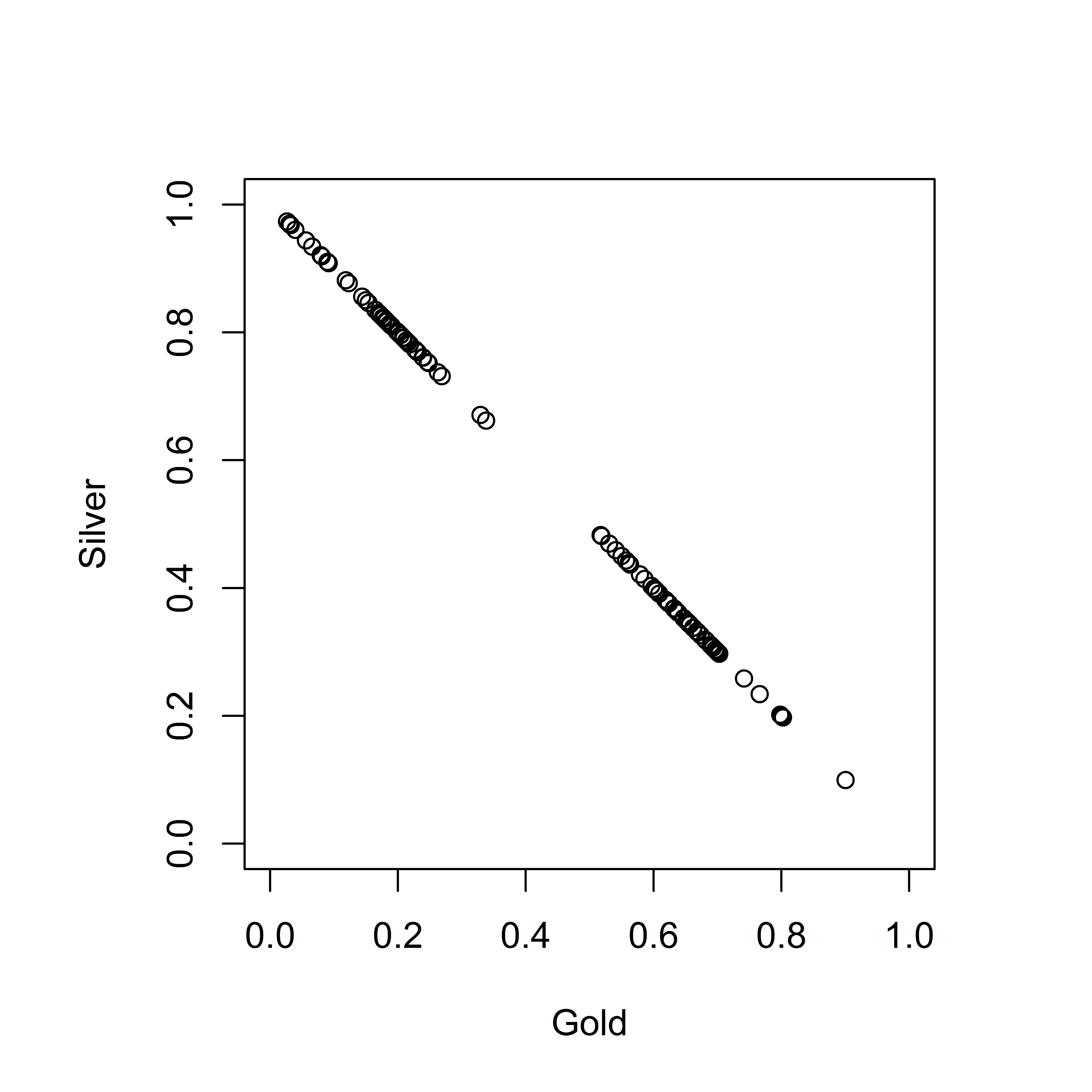}
    \caption{}
    \label{negativesfig:4}
  \end{subfigure}
  \caption{In Figure \ref{negativesfig:1}, the original data set is transformed
    using the method of Remark \ref{asremark3}. The transformed
    observations are presented in Figure \ref{negativesfig:3}. Figures
    \ref{negativesfig:2} and \ref{negativesfig:4} show the diamond
    plot of the $k=100$ largest obervations in $L_1$ norm; Figure
    \ref{negativesfig:2} corresponds to the data before application of
    the Remark \ref{asremark3} procedure and Figure
    \ref{negativesfig:4} is after the procedure.} 
  \label{fig:negativesfig}
\end{figure}

The validity of the support estimate was tested using the remaining
$1976$ observations. Function $g$ was formed using the method
of Remark \ref{asremark3}. The process is illustrated in
Figure \ref{fig:negativesfig}. The aim is to test if the asymptotic
support of the transformed data is covered by $[0,0.325]\cup
[0.5,0.825]$. The null hypothesis is that in our sample $\hat{T}\sim
N(0,1)$ where $\hat{T}$ is as in Equation \eqref{thm3eq1}. The
empirical test statistic $\hat{t}$ corresponding to quantity
$\hat{T}$
was calculated from the remaining observations with result
$\hat{t}\approx 0.074$. Under the null hypothesis
$\prob(\hat{T}>|\hat{t}|)\approx 0.398$. So, the value of $\hat{t}$
gives no reason to reject the null hypothesis or the idea that the
asymptotic support of the original sample is a subset of  $[0,0.65]$. 

As a practical application, we immediately obtain inequalities for
large fluctuations in gold and silver prices. Denote the daily
logarithmic decrease in prices with $x$ for gold and $y$ for
silver. If a very large decrease is observed for gold, i.e. $x$ is
large, then the support estimate implies $x/(x+y)\leq 0.65$ so that
$y\geq 0.53 x$. In other words, the support estimate says it is
unlikely for the decrease in logarithmic silver price to be less than
$0.53 x$. So in the presence of extremal dependence, the method allows
qualitative conclusions about
otherwise unknown quantities.

\subsection{Final thoughts}

The examples show  our methods are
usable in some scenarios but asymptotic support
estimation obviously has limitations. For one, it is challenging to find a
large sample of vectors with tail equivalent marginals that satisfies
the iid assumption. Thus, data pre-processing is required to get
  the data into usable form. With time series, larger number of observations
may lead to poor results because of lack of stationarity. 
In financial contexts, a popular pre-processing method is
{\em de-GARCHing}, see \cite[Sec 2.1.]{HOFERT2017}. The choice of
pre-processing method adds a new source of uncertainty to the model.  

Existence of an angular limit measure requires
a {\it standard\/} MRV model in which marginal tails are tail
equivalent. Theoretically, non-standard MRV models can be transformed
to standard 
and the rank transform or power transform are the data analogues of
the transform \cite{MR2271424}.
While rank transformed data can consistently estimate the limit
measure
\cite{MR2271424, heffernan:resnick:2005},
it is not clear what effect such a transform applied to finite samples
has on support estimation.

The proposed method in Sections \ref{supportestimationsection} and
\ref{asindsection} has an exploratory component since the
support estimate requires choice of $k, m, q$.
We are developing  dedicated
software that facilitates such choices and graphically shows effects
of the choice on support
identification and testing. 

\subsection*{Acknowledgements}
Jaakko Lehtomaa gratefully acknowledges hospitality and use of the
services and facilities of Cornell University's School of Operations
Research and Information Engineering 
during his visit from Sep 2017 to Aug 2018, funded by the Finnish
Cultural Foundation via the Foundations’ Post Doc Pool. Sidney Resnick
was partially supported by US ARO MURI grant W911NF-12-1-0385 to
Cornell University and final versions of this paper were written
while visiting Australian National University's College of Business
and Economics and grateful acknowledgement is made for their
hospitality and support.

\begin{footnotesize}
\bibliography{bibliography}{}
\bibliographystyle{acm}
\end{footnotesize}

\end{document}